\theoremstyle{plain}
\newtheorem{THEOREM}{Theorem}[section]
\newtheorem{COROL}[THEOREM]{Corollary}
\newtheorem{LEMMA}[THEOREM]{Lemma}
\newtheorem{PROP}[THEOREM]{Proposition}
\newtheorem{lemma}[THEOREM]{Lemma}
\theoremstyle{definition}
\newtheorem{DEF}[THEOREM]{Definition}
\theoremstyle{remark}
\newtheorem{REMARK}[THEOREM]{Remark}
\newtheorem{CLAIM}[THEOREM]{Claim}
\newcommand{\N}{\ensuremath{\mathbb{N}}}   
\newcommand{\Z}{\ensuremath{\mathbb{Z}}}   
\newcommand{\R}{\ensuremath{\mathbb{R}}}   
\newcommand{\T}{\ensuremath{\mathbb{T}}}   
\def \a {\alpha}
\def \d {\delta}
\def \g {\gamma}
\def \e {\epsilon}
\def \f {\varphi}
\def \k {\kappa}
\def \l {\lambda}
\def \L {\Lambda}
\def \s {\sigma}
\def \t {\tau}
\def\cF {\mathcal {F}}
\def \cL {\mathcal{L}}
\def \cM {\mathcal{M}}
\def\cT {\mathcal {T}}
\def \loc {\mathrm{loc}}
\def \< {\langle}
\def \> {\rangle}
\def \p {\partial}
\def \ra {\rightarrow}
\newcommand{\esssup}{\mathop{\mathrm{ess\,sup}}}
\DeclareMathOperator{\supp}{supp} %
\def \dt  {\, \mbox{d}t}
\def \dx  {\, \mbox{d}x}
\def \dy  {\, \mbox{d}y}
\def \dz  {\, \mbox{d}z}
\def \ds  {\, \mbox{d}s}
\begin{document}

	
\title{Weak and Strong Solutions to the Forced Fractional Euler Alignment System}

\author{Trevor M. Leslie}

\email{tlesli2@uic.edu}

\address{Department of Mathematics, Statistics, and Computer Science \\851 S Morgan St, M/C 249 \\ University of Illinois at Chicago, Chicago, IL, 60607}

\begin{abstract}
We consider a hydrodynamic model of self-organized evolution of agents, with singular interaction kernel $\phi_\a(x)=1/|x|^{1+\alpha}$ ($0<\alpha<2$), in the presence of an additional external force.  Well-posedness results are already available for the unforced system in classical regularity spaces.  We define a notion of solution in larger function spaces, in   particular in $L^\infty$ (``weak solutions'') and in $W^{1,\infty}$ (``strong solutions''), and we discuss existence and uniqueness of these solutions. Furthermore, we show that several important properties of classical solutions carry over to these less regular ones.  In particular, we give Onsager-type criteria for the validity of the natural energy law for weak solutions of the system, and we show that fast alignment (weak and strong solutions) and flocking (strong solutions) still occur in the forceless case. 
\end{abstract}
	
\maketitle
	
\section{Introduction}

\subsection{The Forced Euler-Alignment System}	
For some fixed $\a\in (0,2)$, we consider the system 
\begin{equation}
\label{e:mainv}
u_t + uu' = -\L_\a(\rho u) + u\L_\a\rho + f, 
\end{equation}
\begin{equation}
\label{e:maind}
\rho_t + (\rho u)' = 0,
\end{equation}
for $(x,t)\in \T\times [0,\infty)$.  Here and below, we use primes $'$ to denote spatial derivatives.  The torus $\T$ may have arbitrarily large period, but we work on the $2\pi$-periodic torus for the sake of definiteness.  Here $u=u(x,t)$ is the macroscopic velocity, $\rho=\rho(x,t)$ is the density (assumed nonnegative), and $f=f(x,t)$ is an external forcing term, assumed given.  The operator $-\L_\a$ is (up to a constant) the classical fractional Laplacian, with kernel 
\begin{equation}
\label{e:kernel}
\phi_\a(z) = \sum_{k\in \Z} \frac{1}{|z+2\pi k|^{1+\a}},
\quad \quad 
z\in [-\pi,\pi]\backslash \{0\}.
\end{equation}
The action of $-\L_\a$ on a sufficiently regular function $g:\T\to \R$ is given explicitly by 
\[
-\L_\a g(x) = \int_\T (g(x+z) - g(x))\phi_\a(z)\dz = \int_\R (u(x+z) - u(x))\frac{\dz}{|z|^{1+\a}},
\]
with the integral taken in the principal value sense.  Let us temporarily consider the situation where $f\equiv 0$.  In this setting, \eqref{e:mainv}--\eqref{e:maind} becomes a special case of the system 
\begin{equation}
\label{e:genv}
u_t + uu' = \cL_\phi(\rho u) - u\cL_\phi\rho, 
\end{equation}
\begin{equation}
\label{e:gend}
\rho_t + (\rho u)' = 0,
\end{equation}
where $\cL_\phi$ is given by 
\[
\cL_\phi g(x) = \int_\T \phi(|x-y|)(g(y)-g(x))\dy.
\]
The system \eqref{e:genv}--\eqref{e:gend} can in turn be interpreted as a macroscopic limit of the system
\begin{equation}
\label{e:CS}
\left\{
\begin{array}{lcl}
\dot{x}_i & = & v_i, \\
\dot{v}_i & = & \frac{1}{N} \sum_{j=1}^N \phi(|x_i-x_j|)(v_j-v_i),
\end{array}
\right.
\end{equation}
as $N\to \infty$.  The system \eqref{e:CS} is the celebrated Cucker-Smale model \cite{CS2007a}, which describes the positions $x_i$ and velocities $v_i$ of $N$ agents whose binary interaction law depends on the radial influence function $\phi\ge 0$.  We do not attempt an overview of the existing literature related to this model; rather we cite only a few results which are pertinent to the present context and refer the reader to, for example, \cite{CCP2017} and references therein for a more substantial review.  See also the Introduction of \cite{DKRT} for a useful and concise overview of some relevant results.  

The system \eqref{e:CS} and its long-time dynamics are associated with two especially notable phenomena.  First, the velocities align to a constant (given by momentum divided by mass---both of these are conserved), and second, the system exhibits the so-called flocking phenomenon, whereby the agents gather into a crowd of finite diameter.  However, it seems that in order for these characteristics to emerge, the kernel $\phi$ must involve some (non-physical) long-range interactions (c.f. \cite{CCTT2016}, \cite{CS2007a}, \cite{STII}, \cite{TT2014}).  In order to emphasize rather the local interactions, one recent strategy has been to use a kernel $\phi$ which is singular at the origin, for example $\phi = \phi_\a$.  This is the case we treat in the present paper, at the macroscopic level of the system \eqref{e:mainv}--\eqref{e:maind}.  See also \cite{MT2011} for another approach on the level of the agents.

Within the last few years, the system \eqref{e:mainv}--\eqref{e:maind} has received a fair amount of attention; the papers \cite{STI}, \cite{STII}, \cite{STIII}, and \cite{DKRT} all give well-posedness results in classical regularity spaces in the case $f\equiv 0$.  The second and third of these also show that classical solutions of the system exhibit flocking (see below for more details).  In \cite{KT}, the well-posedness of \eqref{e:mainv}--\eqref{e:maind} is studied in the case where $f$ is replaced by an  attraction-repulsion interaction that depends on $\rho$ and (the derivative of) a given kernel $K$.  

In dimensions higher than $1$, there are very few results on the analogues of the systems \eqref{e:mainv}--\eqref{e:maind} or \eqref{e:genv}--\eqref{e:gend}.  He and Tadmor \cite{HT2016} have considered the analogue of \eqref{e:genv}--\eqref{e:gend} in two dimensions in the case of smooth kernels $\phi$.  And very recently, Shvydkoy \cite{S2018} gave the first results to treat the analogue of the (forceless) system \eqref{e:mainv}--\eqref{e:maind} in arbitrary dimensions $n>1$.  The latter work proves a small data result for the range $\a\in (2/3,\;3/2)$.  

The present work differs from all those cited above in that it treats well-posedness in low-regularity spaces, for an arbitrary external force $f$ (which is sufficiently regular).  Before giving more details on the results contained in this paper and past work on the equations, however, we pause to give some definitions that will be helpful in this discussion.

\subsection{Auxiliary Quantities and Notation}
An interesting feature of the system \eqref{e:mainv}--\eqref{e:maind} is that certain combinations of $u$ and $\rho$ formally satisfy conservation laws or transport equations.  For example, define $e:=u' - \L_\a \rho$.  Then the velocity equation can be rewritten as 
\begin{equation}
\label{e:ue}
u_t + ue = -\L_\a(\rho u) + f.
\end{equation}
Differentiating this, applying $\L_\a$ to the density equation, and subtracting, we obtain an evolution equation for $e$:
\begin{equation}
\label{e:econs}
e_t + (ue)' = f'.
\end{equation}
Next, we define $q:=e/\rho$.  Taking the time derivative of $q$ and using the density equation, we see that $q$ satisfies
\begin{equation}
\label{e:qtrans}
q_t + uq' = \frac{f'}{\rho}.
\end{equation}
But then $q'$ satisfies an equation like \eqref{e:econs}:
\begin{equation}
\label{e:qxcons}
q'_t + (uq')' = (q_t + uq')' = \left( \frac{f'}{\rho} \right)'.
\end{equation}
And finally, $q'/\rho$ satisfies an equation like \eqref{e:qtrans}:
\begin{equation}
\label{e:qxrhotrans}
\left( \frac{q'}{\rho} \right)_t + u \left( \frac{q'}{\rho} \right)' = \frac{1}{\rho} \left( \frac{f'}{\rho} \right)'.
\end{equation}
Obviously this process can be continued, but $q'/\rho$ is the highest order quantity of this type that we make use of below.

We also set notation for the (conserved) mass $\cM$ associated to the system:
\[
\cM = \int_\T \rho\dx.
\]

\subsection{Weak, Strong, and Regular Solutions}
We now define several notions of a solution to \eqref{e:mainv}--\eqref{e:maind}.  For weaker notions of a solution, we include $e$ as part of our definitions.  To write down a weak formulation, it is helpful to use \eqref{e:ue} instead of the original velocity equation. We also include a weak form of the definition of $e$.  
\begin{DEF}
Let $(u_0, \rho_0, e_0)\in L^\infty \times L^\infty \times L^\infty$ satisfy the compatibility condition 
\begin{equation}
\label{e:compat}
\int_\T e_0 \f + u_0 \f' + \rho_0 \L_\a \f \dx = 0, 
\quad \text{ for all } \f \in C^\infty(\T)
\end{equation}
We say that $(u,\rho,e)$ is a \emph{weak solution} on the time interval $[0,T]$, satisfying the initial data $(u_0, \rho_0, e_0)$, if 
\begin{itemize}
\item $u$, $\rho$, $\rho^{-1}$, and $e$ all belong to $L^\infty(0,T;L^\infty)$.
\item $u$ and $\rho$ belong to $L^2(0,T;H^{\a/2})$. 
\item $(u,\rho,e)$ satisfies the following weak form of \eqref{e:mainv}--\eqref{e:maind}, for all $\f\in C^\infty(\T\times [0,T])$ and a.e. $t\in [0,T]$:
\begin{equation}
\label{e:weakv}
\int_\T u(t)\f(t)\dx - \int_\T u_0 \f(0)\dx - \int_0^t \int_\T u\p_t \f \dx \ds
= \int_0^t \int_\T -ue\f - \rho u \L_\a\f + f\f\dx\ds, 
\end{equation}
\begin{equation}
\label{e:weakd}
\int_\T \rho(t)\f(t)\dx - \int_\T \rho_0\f(0)\dx - \int_0^t \int_\T \rho \p_t \f \dx \ds
 = \int_0^t \int_\T \rho u \f' \dx\ds.
\end{equation}
\item The compatibility condition \eqref{e:compat} propagates in time, in the sense that 
\begin{equation}
\label{e:compat2}
\int_0^T \int_\T e \f + u \f' + \rho \L_\a\f  \dx \ds 
 = 0, \quad \text{ for all } \f \in C^\infty(\T\times [0,T]).
\end{equation}
\end{itemize}
We say that $(u,\rho,e)$ is a weak solution on $[0,T)$ ($0<T\le \infty$) if $(u, \rho, e)$ is a weak solution on $[0,T']$ for all $T'\in (0,T)$.  
\end{DEF}

\begin{DEF}
Let $(u_0, \rho_0, e_0)\in W^{1,\infty}\times W^{1,\infty} \times W^{1,\infty}$ satisfy the compatibility condition \eqref{e:compat}.  We say that $(u, \rho, e)$ is a \emph{strong solution} on the time interval $[0,T]$, satisfying the initial data $(u_0, \rho_0, e_0)$, if $(u,\rho, e)$ is a weak solution such that $u$, $\rho$, and $e$ all belong to $L^\infty(0,T;W^{1,\infty})$.  We say that $(u,\rho,e)$ is a strong solution on $[0,T)$ ($0<T\le \infty$) if $(u, \rho, e)$ is a strong solution on $[0,T']$ for all $T'\in (0,T)$. 
\end{DEF}

The quantity $e$ need not play a role in the definition of higher-regularity solutions, though it does remain an important quantity for the analysis of such solutions.  
\begin{DEF}
We say that $(u, \rho)$ is a \emph{regular solution} of \eqref{e:mainv}--\eqref{e:maind}, on the time interval $[0,T]$, satisfying the initial condition $(u_0, \rho_0)\in H^4\times H^{3+\a}$, if 
\begin{itemize}
\item $(u, \rho)$ satisfies \eqref{e:mainv}--\eqref{e:maind} in the classical sense.
\item $(u,\rho)\in C([0,T];H^4\times H^{3+\a})$,
\item $\rho(x,t)\ge c$ for some $c>0$, for all $(x,t)\in \T\times [0,T]$, and 
\item $u(0) = u_0$ and $\rho(0)= \rho_0$ a.e. in $\T$.
\end{itemize}
We say that $(u, \rho)$ is a regular solution on the time interval $[0,T)$ ($0<T\le \infty$) if $(u,\rho)$ is a regular solution on $[0,T']$ for all $T'\in (0,T)$.  
\end{DEF}

\subsection{Alignment and Flocking}

In the discussion above, we have already mentioned the phenomena of alignment and flocking in the context of agent-based models.  We now give the more precise definitions associated to the macroscopic system \eqref{e:mainv}--\eqref{e:maind}.
\begin{DEF}
A solution $(u, \rho, e)$ is said to experience \emph{alignment} if the diameter of the velocities tends to zero as $t\to \infty$:
\[
A(t):=\esssup_{x,y\in \T} |u(x,t)-u(y,t)|\to 0, 
\quad
\text{ as } t\to \infty.
\]
We say that the solution undergoes \emph{fast alignment} if the convergence $A(t)\to 0$ is exponentially fast.  
\end{DEF}

\begin{DEF}
We define the set of \emph{flocking states} $\cF$ as follows: 
\[
\cF:=\{(\overline{u}, \overline{\rho}): \overline{u}\equiv \text{constant},\;\overline{\rho}(x,t) = \rho_\infty(x - t\overline{u})\}.
\]
And we say that $(u,\rho)$ converges to a flocking state $(\overline{u}, \overline{\rho})\in \cF$ in the space $X\times Y$ if 
\begin{equation}
\label{e:flockdefn}
\|u(\cdot, t) - \overline{u}\|_X + \|\rho(\cdot, t) - \overline{\rho}(\cdot,t)\|_Y\to 0
\quad 
\text{ as } t\to \infty.
\end{equation}
And we say that $(u, \rho)$ experiences \emph{fast flocking} in $X\times Y$ if the convergence rate of \eqref{e:flockdefn} is exponentially fast.  
\end{DEF}

\subsection{Previous and Present Results}
The existing well-posedness theory for the system \eqref{e:mainv}--\eqref{e:maind} mostly concerns the special case $f\equiv 0$.  In \cite{STI}, a priori estimates implying the local existence of regular solutions for the case $1\le \a<2$ and $f\equiv 0$ were given.  With local existence in hand, the authors refined these estimates and proved global in time existence of solutions.  Further refinements were given in \cite{STII}, which proves that regular solutions undergo fast alignment and converge exponentially quickly to a flocking state in $H^3\times H^{3-\e}$, for any $\e>0$. 

The first treatment of well-posedness for the case $0<\a<1$, $f\equiv 0$ appeared in \cite{DKRT}. Later, the results of \cite{STI}, \cite{STII} were extended to the case $0<\a<1$ in \cite{STIII}, which obtains local and global existence as a byproduct of the proof of flocking and fast alignment.  In \cite{KT} the authors prove results analogous to those of \cite{DKRT}, in the presence of an additional force of the form $-\p_x K * \rho$.  

The techniques used in the two groups of papers \cite{STI}, \cite{STII}, \cite{STIII} and \cite{DKRT}, \cite{KT} are quite different from one another.  The first group uses regularity theory for fractional parabolic equations and relies extensively on the nonlinear maximum principle of \cite{ConstVicol}; the nonlinear maximum principle was originally used to prove a well-posedness result for the critical SQG equation.  The papers \cite{DKRT}, \cite{KT} use instead the modulus of continuity method, which has also been used to treat (for example) the SQG equation in \cite{KNV2007}.

In this paper, we consider the case of a general external force $f$ which is regular enough for our computations to go through. In principle, we could include the force considered in \cite{KT} in our existence results, but to do so we would need to repeat several of the arguments from \cite{KT}, rendering the inclusion somewhat artificial.  The problem is that the density in \cite{KT} is not obviously bounded a priori, and in fact may grow exponentially in time.  Since their force in turn depends on the density, we would need to make quite a few adjustments to our arguments (and intermediate conclusions) in order to include this case.  To simplify our arguments, we assume that our force $f$ and a sufficient number of its spatial derivatives are uniformly bounded in $\T\times [0,\infty)$.  In particular, our arguments ultimately do not apply to the force considered in \cite{KT}.
Rather, we extend the result of \cite{STI}, \cite{STII}, and \cite{STIII}, concerning existence of regular solutions, to the forced case (for nice enough $f$).  We construct both weak and strong solutions as limits of regular solutions.  These solutions are slightly more regular (in the H\"older sense) than one can conclude a priori using only the definitions of weak and strong solutions.  However, the strong solutions we construct are in fact unique within their class; therefore, the regularity properties obtained by the method of construction are enjoyed by all strong solutions.  

The results described thus far are all basically in the spirit of \cite{STI}, \cite{STII}, \cite{STIII} (and, to a certain extent, \cite{KT}).  We also include, however, a discussion of the natural energy laws of the system \eqref{e:mainv}--\eqref{e:maind} which has no counterpart in any of the aforementioned papers.  (The energy equalities are obvious for solutions in classical regularity spaces, so there was no need for such a discussion in those contexts.)  We propose Onsager-type criteria that guarantee that these energy laws hold for weak solutions.  We emphasize that these criteria are valid for \textit{any} weak solutions, not just the ones we construct as limits of regular solutions.  To treat the nonlinear term, we rely on the techniques of \cite{LS2016} (which in turn relies on \cite{CCFS}, \cite{CET}).  However, existing commutator estimates seem to be insufficient to treat the dissipation term directly, and we therefore devote a fair amount of effort to showing that the dissipation cannot cause any problems.  It turns out that our Onsager-type criteria are satisfied for all weak solutions in the case where $\a\in [1,2)$.  For smaller $\a$ one can prove the analogous energy \textit{in}equalities for the constructed solutions, even if our Onsager-type criteria are not satisfied.  

We state our main results in the following four theorems.

\begin{THEOREM}
\label{t:Reg}
Let $(u_0, \rho_0)\in H^4\times H^{3+\a}$, with $\rho_0^{-1}\in L^\infty$, and assume that $f\in L^\infty(0,\infty;H^4)$.  Then there exists a global-in-time regular solution of \eqref{e:mainv}--\eqref{e:maind} associated to the initial data $(u_0, \rho_0)$.  
\end{THEOREM}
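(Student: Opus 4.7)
My plan is to follow and extend the strategy of \cite{STI}, \cite{STII}, \cite{STIII} for the unforced system, checking that the additional forcing term $f \in L^\infty(0,\infty;H^4)$ is mild enough to preserve global-in-time existence. The argument has three stages: local existence in $H^4 \times H^{3+\a}$, a priori bounds that prevent blow-up, and bootstrap to higher regularity through continuation.

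I would first construct local regular solutions via a parabolic regularization of \eqref{e:mainv}--\eqref{e:maind} (or an equivalent fixed-point scheme), paired with the standard energy estimates at the $H^4 \times H^{3+\a}$ level, using commutator and product estimates for $\L_\a$. The forcing $f$ enters only as a bounded inhomogeneity at this stage and contributes terms that are easily absorbed. This yields a local regular solution on a maximal interval $[0,T_*)$, with $T_*$ depending on $\|u_0\|_{H^4}$, $\|\rho_0\|_{H^{3+\a}}$, $\|\rho_0^{-1}\|_\infty$, and $\|f\|_{L^\infty_t H^4}$; the goal is to rule out $T_* < \infty$ by closing uniform estimates on any compact subinterval.

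The heart of the proof is the following chain of a priori bounds on $[0,T]$. Mass $\cM$ is conserved. Via \eqref{e:qtrans}, $q = e/\rho$ satisfies a transport equation with source $f'/\rho$, so if $\rho \ge c(t) > 0$ up to time $t$, then $\|q(t)\|_\infty \le \|q_0\|_\infty + \int_0^t \|f'(s)\|_\infty/c(s)\,ds$, giving boundedness of $e = q\rho$. The density evolves along trajectories of $u$ via $\p_t \log \rho = -u' = -(e + \L_\a \rho)$, so pointwise control on $\rho$ from above and below reduces to bounds on $\|e\|_\infty$ and $\|\L_\a \rho\|_\infty$. Following \cite{STI}, \cite{STIII}, I would invoke the nonlinear maximum principle of \cite{ConstVicol} to show that dissipation from $\L_\a \rho$ dominates growth of $\rho$ whenever $\|\rho\|_\infty$ becomes large; the forcing contributes a perturbation that at worst causes linear-in-time growth. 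The $L^\infty$ bound on $u$ likewise follows from the maximum principle applied to \eqref{e:ue}, with $-\L_\a(\rho u)$ playing its usual dissipative role at extrema and $f$ entering as a bounded source.

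Once the pointwise estimates are in place, I would propagate the $H^4 \times H^{3+\a}$ regularity through standard energy methods for the system---Kato--Ponce commutators for $\L_\a$, Gagliardo--Nirenberg interpolation---as in \cite{STI}, with $f$ contributing a bounded source. A Gr\"onwall argument then yields a finite bound on the high Sobolev norm over any compact interval, and together with the lower bound on $\rho$ this allows continuation of the local solution to $[0,\infty)$. The hardest step, I expect, is the coupling between the lower bound on $\rho$ and the bound on $e$: in the unforced case $q$ is conserved along trajectories, which decouples these two estimates, whereas the forcing introduces the source $f'/\rho$ that creates a genuine feedback loop between $\rho_{\min}^{-1}$ and $\|q\|_\infty$. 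A careful Gr\"onwall argument, keyed to the dissipation-versus-forcing competition in the nonlinear maximum principle, should show that the relevant quantities grow at most exponentially in time, avoiding any finite-time blow-up.
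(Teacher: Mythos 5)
Your proposal is correct and follows essentially the same route as the paper: local existence and the high-regularity continuation are deferred to \cite{STI}, \cite{STIII} with trivial modifications, and the real work is the $L^\infty$ control of $u$, $\rho$, $\rho^{-1}$, and $e$, obtained from maximum-principle arguments at spatial extrema, the transport equation \eqref{e:qtrans} for $q$, and the competition between the forcing and the nonlocal term (averaging toward $\cM$ at the minimum of $\rho$, kernel singularity at the maximum). You correctly identify the $\rho_{\min}^{-1}$--$\|q\|_\infty$ feedback loop as the crux; the paper closes it by a breakthrough-scenario argument with explicitly chosen constants yielding clean exponential bounds (later reused for the weak-solution limit), whereas your Gr\"onwall closure also works for global existence, just with cruder growth rates.
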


\begin{THEOREM}
\label{t:wk}
Let $(u_0, \rho_0, e_0)\in L^\infty\times L^\infty \times L^\infty$ satisfy the compatibility condition \eqref{e:compat}. Assume additionally that $\rho_0^{-1}\in L^\infty$ and that $f\in L^\infty(0,\infty;W^{1,\infty})$.  Then there exists a global-in-time weak solution $(u, \rho, e)$ associated to the initial data $(u_0, \rho_0, e_0)$, which satisfies the following energy inequalities:
\begin{equation}
\label{e:ei}
\frac12 \int_\T \rho u^2(t)\dx + \frac12 \int_0^t \int_\T \int_\R \rho(x)\rho(y) \frac{|u(x)-u(y)|^2}{|x-y|^{1+\a}}\dy\dx\ds \le \frac12 \int_\T \rho_0 u_0^2\dx + \int_0^t \int_\T \rho u f \dx\ds
\end{equation}
\begin{equation}
\label{e:rhoei}
\int_\T \rho(t)^2\dx + \frac12\int_0^t \int_\T \int_\R (\rho(x)+\rho(y)) \frac{|\rho(x)-\rho(y)|^2}{|x-y|^{1+\a}}\dy\dx\ds \le \int_\T \rho_0^2\dx - \int_0^t \int_\T e\rho^2\dx \ds.
\end{equation}
For this solution, $u$ and $\rho$ are H\"older continuous on compact sets of $\T \times (0,\infty)$ (with H\"older exponent depending on the compact set).  Moreover, in the case where $f$ is compactly supported in time, the velocity field $u$ exhibits fast alignment to a constant.   
\end{THEOREM}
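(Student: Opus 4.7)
The plan is to produce $(u,\rho,e)$ as a limit of regular solutions. First, I would mollify the initial data to obtain $(u_0^\epsilon, \rho_0^\epsilon) \in H^4 \times H^{3+\a}$, preserving a uniform positive lower bound on $\rho_0^\epsilon$ (possible since $\rho_0^{-1} \in L^\infty$), and set $e_0^\epsilon := (u_0^\epsilon)' - \L_\a \rho_0^\epsilon$; by the distributional form of the compatibility condition \eqref{e:compat}, $e_0^\epsilon$ converges to $e_0$ strongly in $L^p$ for any $p < \infty$. Applying \thm{t:Reg} yields regular solutions $(u^\epsilon, \rho^\epsilon)$, with $e^\epsilon := (u^\epsilon)' - \L_\a \rho^\epsilon$, each of which automatically satisfies the pointwise compatibility and the weak forms \eqref{e:weakv}--\eqref{e:weakd}.

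The main technical task is to derive $\epsilon$-uniform bounds on $\|u^\epsilon\|_\infty$, $\|\rho^\epsilon\|_\infty$, $\|(\rho^\epsilon)^{-1}\|_\infty$, and $\|e^\epsilon\|_\infty$ in $L^\infty(0,T;L^\infty)$ for every $T<\infty$. Rewriting the velocity equation as $u_t + uu' = \int \rho(y)(u(y)-u(x))\phi_\a(x-y)\,dy + f$ exhibits a dissipative first term on the right, so the maximum principle furnishes a bound on $\|u^\epsilon\|_\infty$ that grows at most linearly in $t$. The transport equation \eqref{e:qtrans} for $q:=e/\rho$ controls $\|q^\epsilon\|_\infty$ once $(\rho^\epsilon)^{-1}$ is under control, and the density equation propagates upper and lower bounds on $\rho^\epsilon$ in terms of $\int_0^t \|(u^\epsilon)'\|_\infty$; combined with $e^\epsilon = \rho^\epsilon q^\epsilon$ this yields a coupled Gr\"onwall-type system that closes on any finite interval. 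Uniform $L^2(0,T;H^{\a/2})$ bounds on $u^\epsilon$ and $\rho^\epsilon$ then follow from the energy identities (equalities for regular solutions).

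With these bounds in hand, the time derivatives $\p_t u^\epsilon$ and $\p_t \rho^\epsilon$ are uniformly bounded in suitable negative Sobolev norms by inspection of the equations, so an Aubin--Lions argument extracts a subsequence along which $u^\epsilon, \rho^\epsilon$ converge strongly in $L^2((0,T)\times \T)$, while $e^\epsilon$ converges weak-$\ast$ in $L^\infty$. Passage to the limit in \eqref{e:weakv}--\eqref{e:weakd} is then routine: strong convergence of $u^\epsilon$ against the uniformly bounded $e^\epsilon$ handles $ue$, and strong convergence of $\rho^\epsilon u^\epsilon$ in $L^1$ combined with smoothness of the test functions handles the terms $\rho u \L_\a \f$ and $\rho u \f'$; the propagated compatibility \eqref{e:compat2} passes to the limit because $\L_\a$ is bounded $L^2 \to H^{-\a}$. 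The energy inequalities \eqref{e:ei}--\eqref{e:rhoei} follow from the regular-solution equalities via weak lower semi-continuity of the dissipation integrals. For H\"older regularity on compact subsets of $\T \times (0,\infty)$, I would apply the fractional De Giorgi--Schauder theory for drift-diffusion equations with rough coefficients (in the vein of Silvestre and Caffarelli--Vasseur) to the equations for $u$ and $\rho$, the uniform $L^\infty$ bounds on $u, \rho, \rho^{-1}, e$ verifying the structural hypotheses required of such results.

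For the fast alignment claim, suppose $\supp f \subset \T \times [0,T_0]$. For $t > T_0$, \eqref{e:qtrans} reduces to pure transport $q_t + uq' = 0$, so $\|q(t)\|_\infty$ is conserved (the H\"older regularity of $u$ legitimizes the characteristic-based argument). The density-weighted dissipation in the velocity equation together with the uniform lower bound on $\rho$ then yields $A(t)' \leq -cA(t)$ for some $c > 0$, following the scheme of \cite{STII} adapted to the weak setting; H\"older continuity of $u$ again justifies the pointwise computation at the essential extrema. I expect the hardest part to be the coupled bootstrap in the second paragraph: controlling $\|e^\epsilon\|_\infty$ requires a lower bound on $\rho^\epsilon$, which requires a Lipschitz-in-$x$ bound on $u^\epsilon$, which in turn requires a bound on $e^\epsilon$. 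Decoupling this cycle with estimates independent of $\epsilon$ and well-behaved in $t$ is where the analysis will be most delicate.
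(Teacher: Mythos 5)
Your overall architecture (mollify the data, solve via Theorem \ref{t:Reg}, extract $\epsilon$-uniform bounds, apply Aubin--Lions, pass to the limit, use lower semicontinuity for the energy inequality, prove alignment for the approximations and let it survive the limit) is the same as the paper's. But there is a genuine gap at precisely the step you flag as ``most delicate'': the $\epsilon$-uniform $L^\infty$ bounds. You propose to propagate upper and lower bounds on $\rho^\epsilon$ ``in terms of $\int_0^t \|(u^\epsilon)'\|_\infty$'' and to close a coupled Gr\"onwall system; as written this cannot work. The quantity $\|(u^\epsilon)'\|_{L^\infty}$ is not uniformly bounded in $\epsilon$ (the data is only $L^\infty$), and the identity $u'=e+\L_\a\rho$ does not rescue it, since $\|\L_\a\rho\|_{L^\infty}$ is not controlled by $\|\rho\|_{L^\infty}$. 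The paper's resolution is to \emph{eliminate} $u'$ from the density equation altogether, rewriting it as $\rho_t+u\rho'=-q\rho^2-\rho\L_\a\rho$ with $q=e/\rho$. Then: (i) $q$ is bounded via its transport equation \eqref{e:qtrans} purely in terms of $\int_0^t\rho_-(s)^{-1}\,ds$; (ii) the lower bound on $\rho$ comes from a breakthrough-scenario argument in which, at the first touching time of the barrier $c_0e^{-c_1t}$, the nonlocal term satisfies $\L_\a\rho(x_-)\ge \cM\inf_\T\phi_\a-2\pi(\inf_\T\phi_\a)\rho_-$ and dominates $-\|q\|_{L^\infty}\rho_-^2$ for the right choice of constants; (iii) the upper bound on $\rho$ additionally exploits the kernel singularity, $r\inf_{|z|<r}\phi_\a(z)\to\infty$ as $r\to0$, choosing a scale $r_1$ at which the short-range dissipation absorbs the superlinear term $+\|q\|_{L^\infty}\rho_+^2$ --- without this the maximum could blow up in finite time and no Gr\"onwall argument in the quantities you list will close. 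Only after all this does $e=q\rho$ give the bound on $e$. This chain of reductions is the heart of the existence proof and is absent from your proposal.

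A secondary issue concerns regularity and compactness. Strong $L^2((0,T)\times\T)$ convergence suffices to pass to the limit in \eqref{e:weakv}--\eqref{e:weakd}, but it does not deliver the H\"older continuity asserted in the theorem, and applying De Giorgi/Silvestre theory \emph{directly to the limiting} $u$ is problematic: the velocity equation is not parabolic in $u$ (the diffusion acts on $\rho$ and on $m=\rho u$). The paper instead applies Silvestre's theorem to the parabolic forms \eqref{e:parabdens}--\eqref{e:parabmo} at the level of the regular approximations, obtaining $[\rho^\epsilon(t)]_{C^\gamma},[m^\epsilon(t)]_{C^\gamma}\lesssim t^{-\gamma/\a}$ uniformly in $\epsilon$ (for $\a<1$ this requires the drift to lie in $L^\infty(0,T;C^{1-\a})$, which again comes from $u'=e+\L_\a\rho$); these bounds simultaneously give the stated H\"older regularity of the limit and supply the compact embedding $C^\gamma\hookrightarrow C^0$ driving Aubin--Lions, whose output (uniform convergence on $[\delta,T]\times\T$) is what the paper uses to handle the density factors in the dissipation term of \eqref{e:ei}. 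Finally, note that sending $\delta\to0$ in the energy inequality requires comparing the energy at time $\delta$ back to the initial energy through the approximate solutions on $[0,\delta]$, since the constructed solution attains its data only weak-$*$.
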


\begin{THEOREM}
\label{t:en}
Let $(u, \rho, e)$ be any weak solution on $[0,T]$, with $f\in L^2(\T\times [0,T])$.  If $\a\in (0,1)$, we assume additionally that $u\in L^3(0,T;B^{1/3}_{3,c_0})$ and $\rho\in L^3(0,T;B^{1/3}_{3,\infty})$.  If $\a\in [1,2)$, no additional assumption is needed.  Then $(u, \rho, e)$ satisfies the following energy equalities for a.e. $t\in [0,T]$:
\begin{equation}
\label{e:ee}
\frac12 \int_\T \rho u^2(t)\dx + \frac12 \int_0^t \int_\T \int_\R \rho(x)\rho(y) \frac{|u(x)-u(y)|^2}{|x-y|^{1+\a}}\dy\dx\ds = \frac12 \int_\T \rho_0 u_0^2\dx + \int_0^t \int_\T \rho u f \dx\ds
\end{equation}
\begin{equation}
\label{e:rhoee}
\int_\T \rho(t)^2\dx + \frac12\int_0^t \int_\T \int_\R (\rho(x)+\rho(y)) \frac{|\rho(x)-\rho(y)|^2}{|x-y|^{1+\a}}\dy\dx\ds = \int_\T \rho_0^2\dx - \int_0^t \int_\T e\rho^2\dx \ds.
\end{equation}
\end{THEOREM}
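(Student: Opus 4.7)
The plan is to prove \eqref{e:ee} and \eqref{e:rhoee} by a spatial mollification argument in the spirit of Constantin--E--Titi \cite{CET}, adapted as in \cite{LS2016} to accommodate the nonlocal dissipation. Let $\f_\e$ be a standard mollifier on $\T$ and set $g_\e := g * \f_\e$ for any $g$. Convolving \eqref{e:mainv} and \eqref{e:maind} with $\f_\e$ and isolating the commutators produces the mollified system
\begin{equation*}
(u_\e)_t + u_\e (u_\e)' = -\L_\a((\rho u)_\e) + u_\e \L_\a \rho_\e + f_\e + \cR_\e^{(1)}, \qquad (\rho_\e)_t + (\rho_\e u_\e)' = \cR_\e^{(2)},
\end{equation*}
where $\cR_\e^{(1)}$ collects the bilinear commutators $(uu')_\e - u_\e(u_\e)'$ and $(u \L_\a \rho)_\e - u_\e \L_\a \rho_\e$, and $\cR_\e^{(2)} = (\rho_\e u_\e - (\rho u)_\e)'$. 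Since the mollified quantities are smooth in $x$, I can apply the chain rule to $\tfrac12 \rho_\e u_\e^2$ (combining both equations) and to $\rho_\e^2$ (using only the density equation), then integrate in $(x,t)$ to obtain candidate energy identities. The task reduces to showing that the commutator contributions vanish and that the remaining terms converge to their unmollified counterparts as $\e \to 0$.

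The transport commutators are handled by the classical Constantin--E--Titi identity
\begin{equation*}
(uv)_\e = u_\e v_\e + (u - u_\e)(v - v_\e) - r_\e(u,v), \qquad r_\e(u,v)(x) := \int_\T \f_\e(y)(u(x-y) - u(x))(v(x-y) - v(x))\dy.
\end{equation*}
After pairing against $\rho_\e u_\e$ (resp.\ $\rho_\e$), each such commutator produces cubic expressions in the increments of $u$ and $\rho$. For $\a \in (0,1)$, the hypotheses $u \in L^3(0,T; B^{1/3}_{3,c_0})$ and $\rho \in L^3(0,T; B^{1/3}_{3,\infty}) \cap L^\infty$ are precisely the Onsager-critical regularity needed to drive these cubic contributions to zero in $L^1$, following the template of \cite{CCFS, LS2016}; the $c_0$ condition on $u$ gives strict convergence, while the $B^{1/3}_{3,\infty}$ bound on $\rho$ need only hold uniformly. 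For $\a \in [1, 2)$, the weak-solution definition already furnishes $u, \rho \in L^2(0, T; H^{\a/2})$, which in one space dimension, combined with the $L^\infty$ bounds, contains the required Besov regularity by Sobolev embedding and interpolation, so no further hypothesis is needed.

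The main obstacle is the nonlocal dissipation, to which no standard commutator estimate applies. The strategy begins from the algebraic identity
\begin{equation*}
-\L_\a(\rho u)(x) + u(x)\,\L_\a \rho(x) = \int_\R \rho(y)(u(y) - u(x))\phi_\a(x - y)\dy,
\end{equation*}
which, paired with $\rho u$ and symmetrized in $(x,y)$, produces the negative-definite quartic integral on the left of \eqref{e:ee}. At the mollified level one encounters an \emph{asymmetric} analogue when pairing with $\rho_\e u_\e$; the plan is to decompose it as the target symmetric dissipation evaluated on $(u, \rho)$ plus error terms built from the mollification defects $u - u_\e$ and $\rho - \rho_\e$, each weighted against the increments $u(x) - u(y)$ or $\rho(x) - \rho(y)$. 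Cauchy--Schwarz combined with the \emph{a priori} bound
\begin{equation*}
\iint_{\T \times \R} \frac{|u(x) - u(y)|^2 + |\rho(x) - \rho(y)|^2}{|x-y|^{1+\a}}\dy\dx \in L^1(0,T),
\end{equation*}
supplied by the $L^2 H^{\a/2}$ regularity of any weak solution, will drive these errors to zero; this is why no Besov hypothesis on the dissipation itself is required in either regime. Convergence of the symmetric quartic integral then follows from Fatou's lemma in one direction and dominated convergence (after a diagonal truncation of $\phi_\a$ near the origin) in the other. The density identity \eqref{e:rhoee} is obtained by the same scheme, using the pointwise form $u' = e + \L_\a \rho$ of the compatibility relation \eqref{e:compat2} to rewrite $-\int \rho^2 u'\dx$ as $-\int e \rho^2 \dx$ plus the desired symmetric dissipation.
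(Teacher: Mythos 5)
Your mollification strategy is a genuinely different route from the paper's, which works with Littlewood--Paley truncations and an LS2016-style energy budget built on a weak \emph{momentum} equation; your choice of $\tfrac12\rho_\e u_\e^2$ as the regularized energy density avoids the awkward quotient $U=(\rho u)_{\le Q}/\rho_{\le Q}$ entirely, and your use of the hypotheses (including the observation that $L^\infty\cap L^2H^{\a/2}\subset L^3B^{1/3}_{3,3}$ for $\a\ge 1$) matches the paper's. However, as written the proposal has two concrete gaps, both located exactly where the paper invests its effort.

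First, you cannot obtain the mollified system by ``convolving \eqref{e:mainv} with $\f_\e$'': a weak solution satisfies only \eqref{e:weakv}--\eqref{e:compat2}, in which the nonlinearity appears as the bounded product $ue$ and the dissipation as $\rho u\,\L_\a\f$. The objects $uu'$ and $u\L_\a\rho$ entering your commutator $\cR^{(1)}_\e$ are not a priori defined for $u,\rho\in L^\infty\cap H^{\a/2}$ when $\a<1$ (e.g.\ $u'\in H^{\a/2-1}$ cannot be multiplied by $u\in L^\infty\cap H^{\a/2}$ without a paraproduct/duality construction), and the identity $uu'=\tfrac12(u^2)'$ that your CET decomposition implicitly uses is itself an Onsager-type statement requiring the Besov hypothesis. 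You must instead derive the mollified velocity equation from \eqref{e:weakv}, reorganize using the mollified compatibility relation $e_\e=u_\e'-\L_\a\rho_\e$, and track the resulting trilinear (in $\rho,u,u$) commutator; this is the analogue of the paper's weak momentum equation and of the $r_Q(\rho,u,u)$ term in its flux decomposition, and it is a nontrivial step, not bookkeeping.

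Second, and more seriously, your treatment of the dissipation starts from the pointwise kernel identity $\cT(\rho,u)(x)=\int_\R\rho(y)(u(y)-u(x))\phi_\a(x-y)\dy$ and proposes to manipulate increments under Cauchy--Schwarz. For a general weak solution this integral is not absolutely convergent: the splitting $|u(x)-u(y)|\,|x-y|^{-(1+\a)}=\frac{|u(x)-u(y)|}{|x-y|^{(1+\a)/2}}\cdot\frac{1}{|x-y|^{(1+\a)/2}}$ pairs the $H^{\a/2}$ Gagliardo density against a non-square-integrable weight, so the ``error terms built from mollification defects weighted against increments'' are not legitimate integrals to estimate, and the Fubini/symmetrization steps are unjustified. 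The dissipation must be kept in the duality form $\langle\cT(\rho,u),\psi\rangle=\int-\L_{\a/2}(\rho u)\L_{\a/2}\psi+\L_{\a/2}\rho\,\L_{\a/2}(u\psi)\dx$, exactly as the paper does when it defines $\rho\cT(\rho,u)\in X^*$; the paper explicitly notes that standard commutator estimates fail here and devotes all of Section~\ref{ss:dissconv} to a bespoke decomposition. (In your organization the term to control is $\int\rho_\e u_\e(\cT(\rho,u))_\e\dx=\langle\cT(\rho,u),(\rho_\e u_\e)_\e\rangle$, which \emph{can} be closed softly via the algebra property of $L^\infty\cap H^{\a/2}$ and weak $H^{\a/2}$-convergence of $(\rho_\e u_\e)_\e$ and $u(\rho_\e u_\e)_\e$ --- note that strong convergence fails since $\rho_\e\not\to\rho$ in $L^\infty$ --- but that is a different argument from the one you describe, and your appeal to Fatou would in any case only yield the energy \emph{in}equality rather than the claimed equality.)
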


\begin{THEOREM}
\label{t:stng}
Let $(u_0, \rho_0, e_0)\in W^{1,\infty}\times W^{1,\infty}\times W^{1,\infty}$ satisfy the compatibility condition \eqref{e:compat}, and assume additionally that $\rho_0^{-1}\in L^\infty$, that $f\in L^\infty(0,\infty;W^{2,\infty})$, and that $\a\ne 1$. Then there exists a unique global-in-time strong solution $(u,\rho, e)$ associated to the initial data $(u_0, \rho_0, e_0)$. For this solution, $u$ and $\rho$ belong to $C_\loc((0,\infty); C^1)$.  Moreover, in the case where $f$ is compactly supported in time, $(u,\rho)$ undergoes fast flocking in $W^{1,\infty}\times L^\infty$ to some $(\overline{u}, \overline{\rho})\in \cF$.  In fact, the convergence occurs in $C^{1,\e}\times C^{1,\e}$ for some $\e>0$ (though we make no statement on the rate of convergence in this space).  Finally, even in the case $\a=1$, any strong solution is unique if it exists.  
\end{THEOREM}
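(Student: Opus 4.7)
The plan is a four-step program: construct solutions by approximation, prove uniqueness, upgrade interior regularity, and obtain flocking once $f$ is turned off.

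For \textbf{existence}, mollify the data to produce $(u_0^n, \rho_0^n) \in H^4 \times H^{3+\a}$ with $(\rho_0^n)^{-1}$ bounded uniformly in $n$, and set $e_0^n := (u_0^n)' - \L_\a \rho_0^n$. Then \thm{t:Reg} yields a global regular solution $(u^n,\rho^n)$. The crux is to obtain $W^{1,\infty}$ bounds on $(u^n,\rho^n,e^n)$ uniform in $n$. For this I would exploit the chain of transported auxiliary quantities from the excerpt: $q^n=e^n/\rho^n$ satisfies \eqref{e:qtrans}, yielding an $L^\infty$ bound once $\rho^n$ is bounded below, and $(q^n)'/\rho^n$ satisfies \eqref{e:qxrhotrans}, yielding a second $L^\infty$ bound. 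A two-sided bound on $\rho^n$ follows from integrating the density equation along the flow of $u^n$, provided $(u^n)'$ is bounded; $(u^n)'$ in turn is controlled by the identity $(u^n)' = e^n + \L_\a\rho^n$. This loop closes only when $\L_\a$ maps the regularity class of $\rho$ into $L^\infty$, which fails precisely at $\a=1$---the origin of the restriction in the existence assertion. With the uniform bounds in hand, an Aubin-Lions compactness argument yields a limit triple $(u,\rho,e)$; the weak formulation and compatibility \eqref{e:compat2} pass to the limit by standard strong-weak pairing, and the $W^{1,\infty}$ bounds pass by weak-$*$ lower semicontinuity.

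For \textbf{uniqueness}, consider two strong solutions sharing initial data and set $\delta u = u_1-u_2$, $\delta\rho=\rho_1-\rho_2$. The alignment nonlinearity admits the identity
\[
\bigl[-\L_\a(\rho_1 u_1)+u_1\L_\a\rho_1\bigr]-\bigl[-\L_\a(\rho_2 u_2)+u_2\L_\a\rho_2\bigr]=\int_\R \phi_\a(z)\bigl[\rho_1(\cdot+z)(\delta u(\cdot+z)-\delta u)+\delta\rho(\cdot+z)(u_2(\cdot+z)-u_2)\bigr]\dz.
\]
Testing the equation for $\delta u$ against $\delta u$ yields a dissipation $\int\!\!\int\phi_\a(z)\rho_1(x+z)|\delta u(x+z)-\delta u(x)|^2\dz\dx$ that, via the uniform lower bound on $\rho_1$, dominates an $H^{\a/2}$-seminorm of $\delta u$. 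The residual terms (involving $\delta\rho$ and the $W^{1,\infty}$ data of $u_2$) split into a near-diagonal piece absorbed into the dissipation via Young's inequality and a far-diagonal piece controlled by $L^2$ norms. Coupling with an $L^2$ estimate for $\delta\rho$ from the density equation and applying Gronwall closes the argument. Crucially, this reasoning never demands that $\L_\a$ act as a bounded operator on $L^\infty$, so uniqueness survives at $\a=1$.

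For \textbf{interior $C^1$ regularity}, bootstrap from \thm{t:wk}: H\"older continuity of $u$ and $\rho$ for $t>0$, combined with the transport equation \eqref{e:econs} for $e$ and the identity $u'=e+\L_\a\rho$, upgrades $(u,\rho)$ into $C_\loc((0,\infty);C^1)$. For \textbf{flocking}, set $\bar u=\cM^{-1}\int_\T \rho_0 u_0\dx$ after $f$ vanishes; alignment $A(t)\to 0$ exponentially follows from the energy dissipation in \eqref{e:ei}, the singularity of $\phi_\a$, and the uniform lower bound on $\rho$ (a Poincar\'e-type argument as in \cite{STII,STIII}). Density flocking in $L^\infty$ then follows by showing that $\rho(\cdot+\bar u t,t)$ is Cauchy, using $\rho_t+\bar u\rho'=(\bar u-u)\rho'-\rho u'$ with the uniform bound on $\rho'/\rho$. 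Finally, $C^{1,\e}$ convergence comes from interpolating the $W^{1,\infty}\times L^\infty$ flocking rate against uniform $C^{1,\e'}$ regularity for some $\e'>\e$. The main obstacle throughout is balancing commutators against the dissipation, particularly in closing the uniqueness estimate at the borderline exponent $\a=1$.
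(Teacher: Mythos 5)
Your overall architecture (mollify and pass to the limit for existence, Gr\"onwall for uniqueness, Silvestre-type bootstrap for interior regularity, \cite{STII}/\cite{STIII}-style arguments for flocking) matches the paper's, and the flocking step is essentially correct. But the core of the existence proof --- the uniform $W^{1,\infty}$ a priori bounds --- is missing, and the loop you propose does not close. First, the transport equation \eqref{e:qxrhotrans} for $q'/\rho$ has source $\frac{1}{\rho}\left(\frac{f'}{\rho}\right)'$, which itself contains $\rho'$; so bounding $q'/\rho$ presupposes the very bound on $\rho'$ you are trying to produce (cf.\ \eqref{e:qxuprough}). Second, bounding $\|\rho'\|_{L^\infty}$ is the genuinely hard step: differentiating the density equation produces the term $\rho'\,\L_\a\rho$ (after eliminating $u''$ via $e'+\L_\a\rho'$), and controlling $(\rho')^2\L_\a\rho$ against the dissipation $\rho\,D_\a\rho'$ requires the Constantin--Vicol nonlinear maximum principle, a cutoff decomposition of $\L_\a$, and the Silvestre H\"older regularization of $\rho$ to lower the exponent from $2+\a$ to $2+\a-\g$; none of this is in your sketch. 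Third, your explanation of the $\a=1$ restriction is wrong: for $\a\in(1,2)$ the identity $u'=e+\L_\a\rho$ with $\rho\in W^{1,\infty}$ does \emph{not} bound $u'$ (your criterion would exclude all $\a\ge 1$). The paper instead bounds $m'=(\rho u)'$ directly from the parabolic momentum equation, and the reason $\a=1$ drops out is the exponent comparison there: the cubic term $\|m'\|_{L^\infty}^3$ is beaten by the dissipation $\|m'\|_{L^\infty}^{2+\a}$ only when $\a>1$.

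The uniqueness argument also has a gap precisely in the range $\a\in[1,2)$ where the standalone uniqueness claim matters. Your residual term $\int\!\!\int\phi_\a(z)\,\delta\rho(x+z)\,(u_2(x+z)-u_2(x))\,\delta u(x)\dz\dx$ cannot be controlled by $L^2$ norms plus the $\delta u$-dissipation alone once $\a\ge 1$: the kernel $\phi_\a(z)|u_2(x+z)-u_2(x)|\lesssim |z|^{-\a}$ is no longer integrable, and after the necessary symmetrization the increment falls on $\delta\rho$, producing a term of size $\|\delta\rho\|_{H^{\a/2}}\|\delta u\|_{L^2}$ with no obvious good term to absorb it. The paper's resolution is structural: the $\rho_\d$ energy estimate generates a coercive $-\|\L_{\a/2}\rho_\d\|_{L^2}^2$ through the identity $u_\d'=\tfrac12(q_\d\rho_\s+q_\s\rho_\d)+\L_\a\rho_\d$, which (after scaling by a large constant $C^*$) absorbs $C^*\|\rho_\d\|_{H^{\a/2}}^2$; this in turn forces an auxiliary Gr\"onwall quantity $\|q_\d\|_{L^2}^2$ into the loop. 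Relatedly, for $\a\in[1,2)$ the expression $u\L_\a\rho$ in your difference identity is not a function for $\rho\in W^{1,\infty}$, so the whole computation must be carried out in $H^{-\a/2}$ duality (the paper devotes Section \ref{ss:stee} to justifying $\p_t u^2 = 2u\p_t u$ in that sense before integrating by parts). Without these ingredients your claim that ``uniqueness survives at $\a=1$'' is unsupported.
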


\begin{REMARK}
It may seem somewhat strange that the case $\a=1$ should be excluded from our existence result on strong solutions.  The reason why our method does not yield existence of strong solutions in this case will be clear later from the estimates in Section \ref{ss:W1infty}; for now we simply note that the exclusion of the case $\a=1$ has some precedent.  In fact, the arguments of \cite{STI}, \cite{STIII} prove existence of solutions in $H^3\times H^{2+\a}$ for all $\a\in (0,2)\backslash\{1\}$; going up one more derivative is necessary only for $\a=1$.  It seems likely that our method could be applied to the case $\a=1$ (or other $\a$, for that matter) to yield solutions in $W^{2,\infty}$; however, we prefer to leave this case for future research.
\end{REMARK}

The outline of the paper is as follows.  In Section \ref{s:Reg}, we prove a priori bounds at the $L^\infty$ level for regular solutions.  Once these are established, the rest of the proof of Theorem \ref{t:Reg} follows the same steps as in \cite{STI}, \cite{STIII}, with trivial modifications.  We keep careful track of the dependencies of constants involved in these a priori bounds, in order to prove that they survive the limiting procedure we use to construct weak solutions in Section \ref{s:weak}.  Some additional bounds beyond those required for Theorem \ref{t:Reg} are needed to pass to the limit; we also include these in Section \ref{s:Reg}.  In Section \ref{s:en}, we prove Theorem \ref{t:en}.  In Section \ref{s:Wbds}, we continue proving bounds on regular solutions at the $W^{1,\infty}$ level, and in Section \ref{s:strong} we use these bounds to prove the existence of strong solutions when $\a\ne 1$.  Section \ref{s:strong} also contains the proof of the rest of Theorem \ref{t:stng}.

\section{Existence of and Bounds on Regular Solutions}
\label{s:Reg}

The proof of local-in-time existence of regular solutions of \eqref{e:mainv}, \eqref{e:maind} follows the arguments of \cite{STI}, \cite{STIII} with trivial adjustments to account for the forcing term. The proof of global-in-time existence mostly carries over to the forced case. The only step that requires adjustment from the unforced argument is proving $L^\infty$ bounds on the quantities $u$, $\rho$, $\rho^{-1}$, and $e$ that do not blow up in finite time.  Once such bounds are established, the proof of global-in-time existence again requires only minor adjustments to the proof in the unforced case. In what follows we assume that $(u, \rho)$ is a regular solution and that $e = u' - \L_\a \rho$ and $q=e/\rho$.  For the purposes of proving global existence of regular solutions, we only need to show that $(u,\rho)$ remains bounded in $H^4 \times H^{3+\a}$ on bounded time intervals (or, effectively, we need to show that $u$, $\rho$, and $e$ remain bounded in $L^\infty$ on bounded time intervals); however, we will later construct weak solutions as limits of regular solutions, and in order to ensure that our $L^\infty$ bounds survive the limiting process, we track the dependencies of these bounds carefully and relate each to the initial data.  The limiting process we use also requires some kind of compactness, which we satisfy by proving bounds in H\"older spaces.  We also derive the energy laws \eqref{e:ee} and \eqref{e:rhoee} that are satisfied by regular solutions.  These equalities in particular show that $u$ and $\rho$ are bounded in $L^2 H^{\a/2}$ on finite time intervals, with bounds depending only on the $L^\infty$ norms of the initial data and some other fixed quantities.  
	
\subsection{$L^\infty$ Bounds on First-Order Quantities}
\label{ss:FO}

We collect the bounds on $u$, $\rho$, and $e$ together in the following proposition.  

\begin{PROP}
\label{p:Linfty}
Let $(u, \rho)$ be a regular solution on the time interval $[0,\infty)$, and define $e$ and $q$ as above. The following bounds hold for some positive constants $c_i$, $i=0, 1,\ldots, 4$ and for all $x\in \T$, $t\ge 0$. 
\begin{equation}
\label{e:utinfty}
|u(x,t)|\le \|u_0\|_{L^\infty} + t\|f\|_{L^\infty_{x,t}}.
\end{equation}
\begin{equation}
\label{e:rhobds}
c_0 \exp(-c_1 t) \le \rho(x,t) \le c_3 \exp (c_4 t);
\end{equation}
\begin{equation}
\label{e:qupper2}
|q(x,t)| \le c_2 \exp(c_1 t).
\end{equation}    
\begin{equation}
\label{e:eupper}
|e(x,t)| \le c_2 c_3 \exp((c_1 + c_4)t).
\end{equation}
The constants $c_0$ and $c_2$ depend only on $\|\rho_0^{-1}\|_{L^\infty}$, $\|q_0\|_{L^\infty}$, $\cM$, and $\a$; the constant $c_1$ depends only on $\|f'\|_{L^\infty_{x,t}}$, $\cM$, and $\a$; the constant $c_3$ depends only on $\|\rho_0\|_{L^\infty}$, $c_2$, $\cM$, and $\a$; and $c_4$ depends only on $c_1$ and $\a$.  
\end{PROP}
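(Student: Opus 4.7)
The plan is to establish the four inequalities in sequence, using: the maximum principle for the rewritten $u$-equation; a coupled bootstrap for $q$ and $\rho^{-1}$; and the nonlinear maximum principle of \cite{ConstVicol} to absorb the destabilizing drift in the $\rho$-equation. The main obstacle is the upper bound on $\rho$, which cannot be produced by a linear maximum-principle argument alone.

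For \eqref{e:utinfty} I rewrite the nonlinear terms in \eqref{e:mainv} via the cancellation $-\L_\a(\rho u)+u\L_\a\rho = \int_\T \rho(y)(u(y)-u(x))\phi_\a(x-y)\dy$, so that along a Lagrangian trajectory $X(t)$,
\begin{equation*}
\dot u(X(t),t) = \int_\T \rho(y)\bigl(u(y,t)-u(X,t)\bigr)\phi_\a(X-y)\dy + f(X,t).
\end{equation*}
At a spatial maximum of $u$ the kernel integral is nonpositive (since $\rho\ge 0$), and the symmetric argument applies at the spatial minimum, so \eqref{e:utinfty} follows by integration in time against $\|f\|_{L^\infty_{x,t}}$.

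For \eqref{e:qupper2} and the lower half of \eqref{e:rhobds} I exploit the transport equation \eqref{e:qtrans} for $q$ and the pointwise representation of the continuity equation. Setting $Q(t):=\|q(\cdot,t)\|_{L^\infty}$ and $R(t):=\|\rho^{-1}(\cdot,t)\|_{L^\infty}$, integrating $\dot q = f'/\rho$ along a characteristic gives $Q(t)\le \|q_0\|_{L^\infty} + \|f'\|_{L^\infty_{x,t}}\int_0^t R(s)\ds$. On the other hand, rewriting the continuity equation along characteristics via $u' = q\rho + \L_\a\rho$ yields $\dot\rho = -q\rho^2 - \rho\L_\a\rho$; at a spatial minimum of $\rho$ one has $\L_\a\rho\le 0$, whence $\dot\rho_{\min}\ge -Q(t)\rho_{\min}^2$, equivalently $R'(t)\le Q(t)$. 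The coupled pair of inequalities closes under Gronwall's lemma applied to $R+Q$, producing exponential-in-$t$ bounds giving both \eqref{e:qupper2} and the lower bound in \eqref{e:rhobds} in the claimed form. The dependence of $c_1$ on $\cM$ and $\a$ can be extracted from a sharper use of the nonlinear maximum principle at the minimum of $\rho$, which generates a stabilizing contribution of size $\cM/(2\pi)$ whenever $\rho_{\min}$ is small compared to the average.

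For the upper bound in \eqref{e:rhobds} I evaluate $\dot\rho = -q\rho^2 - \rho\L_\a\rho$ at a spatial maximum $X^*$ of $\rho$. Here the Constantin--Vicol nonlinear maximum principle yields $\L_\a\rho(X^*)\gtrsim \rho_{\max}^{1+\a}/\cM^{\a}$ once $\rho_{\max}$ is large compared with $\cM/(2\pi)$. Combined with \eqref{e:qupper2} this gives
\begin{equation*}
\dot\rho_{\max}\le c_2 e^{c_1 t}\rho_{\max}^2 - c_\a\,\rho_{\max}^{2+\a}/\cM^{\a},
\end{equation*}
so the super-quadratic dissipation dominates the quadratic drift precisely when $\rho_{\max}$ is at most of order $e^{c_1 t/\a}$; a standard invariant-region argument then produces \eqref{e:rhobds} with $c_4=c_1/\a$. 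Finally, \eqref{e:eupper} is immediate from $e=q\rho$ together with \eqref{e:qupper2} and \eqref{e:rhobds}.
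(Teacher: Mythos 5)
Your proposal is correct and follows the same overall strategy as the paper: the kernel rewriting and maximum principle for \eqref{e:utinfty}, the transport equation \eqref{e:qtrans} coupled to the minimum of $\rho$, the exploitation of the kernel singularity at a maximum of $\rho$ to generate a super-quadratic damping term, and the relation $e=q\rho$ for \eqref{e:eupper} all match. The one step where your primary route differs is the lower bound on $\rho$: you discard the dissipation at the minimum (keeping only its sign) and close the coupled system $R'\le Q$, $Q'\le \|f'\|_{L^\infty_{x,t}}R$ by Gr\"onwall. This does yield \eqref{e:rhobds} and \eqref{e:qupper2} with constants of the stated dependencies, so the Proposition as written is proved; but it gives $c_1\ge 1$ even when $f\equiv 0$ (only $\rho_{\min}\gtrsim (1+t)^{-1}$ in the unforced case). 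The paper instead runs a breakthrough argument that retains the stabilizing contribution $\L_\a\rho(x_-)\lesssim -\iota(\pi)(\cM-2\pi\rho_-)$ at the minimum, which forces $c_1\to 0$ as $\|f'\|_{L^\infty_{x,t}}\to 0$; this quantitative refinement is exactly what is used later (Section \ref{ss:0forcewk}) to get time-uniform bounds for compactly supported forces, so your closing remark about the $\cM$-sized stabilizing term would need to be carried out in full rather than left as an aside. For the upper bound your invariant-region argument is sound (one should say ``$\rho_{\max}$ large compared with $\cM/r_0$,'' $r_0$ being the scale below which $\phi_\a(z)\sim|z|^{-1-\a}$, rather than $\cM/(2\pi)$), and the resulting $c_4=c_1/\a$ versus the paper's $(1+\a)c_1/\a$ is immaterial.
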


\begin{REMARK}
\label{r:compare}
Our proof of the lower bound on the density is the same in spirit of the corresponding bound in \cite{KT} and uses a `breakthrough scenario' type argument.  We include the argument for our force $f$ in its entirety for the sake of completeness.  
\end{REMARK}

Throughout the proof of Proposition \ref{p:Linfty} (as well as in Section \ref{ss:W1infty} below), we will tacitly make use of the following application of the classical Rademacher Theorem:
\begin{LEMMA}
Suppose $g:\T\times \R_+\to \R$ is a Lipschitz function, such that for every $x\in \T$, the function $g(x,\cdot)$ is differentiable on all of $\R_+$.  For each $t\in \R_+$, let $x_+(t)$ and $x_-(t)$ denote points in $\T$ where $g(\cdot, t)$ achieves its maximum and minimum, respectively, and define $g_+(t) = g(x_+(t),t)$ and $g_-(t) = g(x_-(t),t)$.  Then 
\begin{itemize}
\item $g_+(t)$ and $g_-(t)$ are Lipschitz, with the same Lipschitz constant as $g$, and 
\item $\p_t g_+(t) = \p_t g(x_+(t), t)$ and $\p_t g_-(t) = \p_t g(x_-(t),t)$ for a.e. $t\in \R_+$. 
\end{itemize}	
\end{LEMMA}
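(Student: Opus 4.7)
This is a standard envelope-type result, and my plan is to handle the two bullet points in sequence, reducing both to the variational characterizations of $x_+(t)$ and $x_-(t)$.

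First, I would prove the Lipschitz bound on $g_+$. Given $s, t \in \R_+$, the maximality of $x_+(s)$ at time $s$ and of $x_+(t)$ at time $t$ yields $g(x_+(t), s) \leq g_+(s)$ and $g(x_+(t), t) = g_+(t)$. Subtracting and invoking the (joint) Lipschitz continuity of $g$ gives
\[
g_+(t) - g_+(s) \leq g(x_+(t), t) - g(x_+(t), s) \leq L|t - s|,
\]
where $L$ is the Lipschitz constant of $g$. Swapping the roles of $s$ and $t$ produces the matching lower bound, so $g_+$ is $L$-Lipschitz. The bound for $g_-$ follows by reversing the inequalities (or by noting that $g_- = -(-g)_+$).

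Next, I would invoke Rademacher's theorem to conclude that $g_+$ is differentiable at a.e.~$t_0 \in \R_+$. At any such $t_0$, and for any admissible choice of $x_+(t_0)$, I would examine one-sided difference quotients. For $h > 0$, the maximality at time $t_0$ forces $g_+(t_0+h) \geq g(x_+(t_0), t_0+h)$, so
\[
\frac{g_+(t_0 + h) - g_+(t_0)}{h} \geq \frac{g(x_+(t_0), t_0 + h) - g(x_+(t_0), t_0)}{h},
\]
and letting $h \downarrow 0$ (using that $g(x_+(t_0), \cdot)$ is differentiable on $\R_+$ by hypothesis) yields $g_+'(t_0) \geq \p_t g(x_+(t_0), t_0)$. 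The same computation for $h < 0$ flips the inequality after division and produces the matching upper bound, so the two quantities coincide. The identity for $g_-$ follows by the same argument with the roles of $\geq$ and $\leq$ exchanged.

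There is no serious obstacle here; the statement is elementary once one sets up the comparison correctly. The one point worth emphasizing in the write-up is that no continuity or measurability of the selection maps $t \mapsto x_\pm(t)$ is needed, and that the derivative identity actually holds for \emph{every} admissible choice of maximizer (or minimizer) at each a.e.~point of differentiability of $g_\pm$ — a convenient corollary when one later applies the lemma to quantities like $\max_x u(x,t)$ or $\min_x \rho(x,t)$ in the proof of Proposition~\ref{p:Linfty}.
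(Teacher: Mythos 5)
Your proof is correct and is essentially the standard argument: the paper itself does not prove this lemma but defers to the Appendix of \cite{CGSV2015}, where the same comparison of one-sided difference quotients at a Rademacher point of $g_\pm$ is used. Both halves of your argument are sound, and your closing remark that the derivative identity holds for every admissible maximizer at each point of differentiability of $g_+$ is a correct and genuinely useful observation for the applications in Proposition~\ref{p:Linfty}.
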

For a proof of this precise statement, see for example the Appendix of \cite{CGSV2015}.  

\begin{proof}[Proof of Proposition \ref{p:Linfty}]
	
\textit{Part 1: Bounds on $u$}

Let $x_+(t)$ and $x_-(t)$ denote a maximum and a minimum, respectively, for $u(\cdot, t)$. 

We write the velocity equation as
\begin{equation}
\label{e:velocity}
u_t + uu' = -\L_\a(\rho u) + u\L_\a \rho + f = \int_\R \frac{u(\cdot +z)-u}{|z|^{1+\a}}\rho(\cdot +z)\dz + f.
\end{equation}
It is clear from the form of the integral in \eqref{e:velocity} and the nonnegativity of $\rho$ that 
\[
[-\L_\a(\rho u) + u\L_\a(\rho)](x_+(t), t) \le 0 \le [-\L_\a(\rho u) + u\L_\a(\rho)](x_-(t), t).
\]
This immediately implies that 
\[
[\p_t u - f](x_+(t), t)\le 0\le [\p_t u-f](x_-(t), t),
\]
so that 
\[
\p_t \|u(t)\|_{L^\infty} \le \|f\|_{L^{\infty}_{t,x}},
\]
from which \eqref{e:utinfty} follows by integration.

\vspace{2 mm}
\textit{Part 2: Lower Bound on $\rho$; Bounds on $q$}	

To avoid estimating $\rho$ in terms of derivatives of $u$, we rewrite the density equation as 
\begin{equation}
\rho_t + u\rho' = -q\rho^2 - \rho \L_\a \rho.
\end{equation}
Evaluating at a minimum $x_-(t)$ of $\rho(\cdot, t)$, we obtain 
\begin{equation}
\label{e:rholwrpre}
\p_t \rho (x_-(t), t) \ge -\|q(t)\|_{L^\infty}\rho_-(t)^2 + \rho_-(t)\L_\a \rho(x_-(t), t).
\end{equation}
Here $\rho_-(t)$ denotes the minimum of $\rho$ at time $t$.  (Below we will use the analogous notation $\rho_+(t)$ for the maximum at time $t$.) We now require a bound on $q$, which is feasible because of the transport equation \eqref{e:qtrans} that it satisfies.  We have
\begin{equation}
\label{e:qupper}
\|q(t)\|_{L^\infty} \le \|q_0\|_{L^\infty} + \|f'\|_{L^\infty_{x,t}}\int_0^t \rho_-(s)^{-1}\ds,
\end{equation}  
Now we can substitute \eqref{e:qupper} into \eqref{e:rholwrpre} to eliminate $q(t)$.  We obtain
\begin{equation}
\label{e:rholwrpre2}
\p_t \rho (x_-(t), t) \ge -\left[\|q_0\|_{L^\infty} + \|f'\|_{L^\infty_{x,t}}\int_0^t \rho_-(s)^{-1}\ds\right]\rho_-(t)^2 + \rho_-(t)\L_\a \rho(x_-(t),t).
\end{equation}
In order to establish the desired lower bound on $\rho$, we use \eqref{e:rholwrpre2} and argue by contradiction.  But let us first define the constants involved.  Denote $\iota(r) := \inf_{|x|<r} \phi_\a(x)$, where $\phi_\a$ is as in \eqref{e:kernel}. Note that $\iota(r)<\infty$ for all $r>0$ and that $\iota(\pi) = \inf_{x\in \T}\phi_\a(x)>0$.
Define 
\begin{equation}
\label{e:constants}
c_0 = \frac12\min\left\{ \rho_-(0),\; \frac{\iota(\pi)\cM}{2\pi\iota(\pi) + \|q_0\|_{L^\infty}} \right\},
\quad 
c_1 = \frac{2 \|f'\|_{L^\infty_{x,t}}}{\iota(\pi)\cM},
\end{equation}
We claim that the desired lower bound on $\rho$ holds for this choice of $c_0$, $c_1$.  Indeed, suppose that the lower bound in \eqref{e:rhobds} fails; then we can define $t_0:=\inf\{ t\ge 0 :\rho_-(t) = c_0 \exp(-c_1 t) \}$.  Clearly $t_0>0$, since $\rho_-(0)\ge 2 c_0$ by definition of $c_0$.  Furthermore, $\rho_-(s)\ge c_0 \exp(-c_1 s)$ for $s\in [0,t_0]$, so that
\begin{align*}
\|q_0\|_{L^\infty} + \|f'\|_{L^\infty_{x,t}}\int_0^{t_0} \rho_-(s)^{-1}\ds
& \le \|q_0\|_{L^\infty} + \|f'\|_{L^\infty_{x,t}}\int_0^t c_0^{-1} \exp(c_1 s)\ds \\
& = \|q_0\|_{L^\infty} + \frac{\iota(\pi)\cM}{2}[ \rho_-(t_0)^{-1} - c_0^{-1}] \\
& \le \|q_0\|_{L^\infty} + \frac{\iota(\pi)\cM}{2} \left[\rho_-(t_0)^{-1} - \frac{2(2\pi\iota(\pi) + \|q_0\|_{L^\infty})}{\iota(\pi)\cM}\right]\\
& = \frac{\iota(\pi)\cM}{2\rho_-(t_0)} - 2\pi \iota(\pi).
\end{align*}
We also have
\begin{align*}
\L_\a \rho(x_-(t_0), t_0) & = \int_\T \phi_\a(z)(\rho(x_-(t_0) + z,t_0) - \rho_-(t_0))\dz \\
& \ge \int_\T \iota(\pi)(\rho(x_- + z,t_0) - \rho_-(t_0))\dz\\
& = \iota(\pi) \cM - 2\pi \iota(\pi) \rho_-(t_0).
\end{align*}
Substituting the two previous estimates into \eqref{e:rholwrpre2} then gives us
\begin{align*}
\p_t \rho (x_-(t_0), t_0)
& \ge -\left[ \frac{\iota(\pi)\cM}{2\rho_-(t_0)} - 2\pi \iota(\pi) \right]\rho_-(t_0)^2 + \rho_-(t_0) [ \iota(\pi) \cM - 2\pi \iota(\pi) \rho_-(t_0) ]  \\
& = \frac{\rho_-(t_0)\iota(\pi)\cM}{2} >0.
\end{align*}
It follows that $\rho_-(s)<c_0\exp(-c_1 s)$ for some time $s<t_0$, contradicting our choice of $t_0$.  This proves the lower bound on $\rho$.   

The bound \eqref{e:qupper2} on $q$ is obtained, with $c_2 = \iota(\pi)\cM/(2c_0)$, by substituting the lower bound on $\rho$ into \eqref{e:qupper}.  

\vspace{2 mm}
\textit{Part 3: Upper Bound on $\rho$; Bounds on $e$}

For this bound, we exploit the singularity of the kernel, using the fact that $\limsup_{r\to 0} r\iota(r)=\infty$.  Recall that for $z\in [-\pi, \pi]\backslash \{0\}$, $\phi_\a(z)$ is defined as in \eqref{e:kernel}, so that for $r\in (0,\pi]$, we have 
\[
\iota(r) = \frac{1}{r^{1+\a}} + \sum_{k\in \N} \frac{1}{(2\pi k + r)^{1+\a}} + \sum_{k\in \N} \frac{1}{(2\pi k - r)^{1+\a}}.
\]
Both the sums in the equation above are bounded by some constant: $\iota(r) \le r^{-1-\a} + C$. Let $r_0\in (0,\pi)$ be such that $r_0^{-1-\a} = C$.  (This is certainly possible, because taking $r=\pi$ in the second sum above shows that $C>\pi^{-1-\a}$.) Then 
\begin{equation}
r^{-1-\a} \le \iota(r) \le 2 r^{-1-\a}, 
\quad \quad \text{ for }  r\in (0,r_0).
\end{equation}

Now we define 
\begin{equation}
\label{e:constants2}
c_3 = \max\left\{ 2\|\rho_0 \|_{L^\infty_x}, 4\cM (2c_2)^{\frac{1}{\a}}, \frac{2}{c_2}\cM  r_0^{-1-\a} \right\},
\quad c_4 = \frac{1+\a}{\a}c_1.
\end{equation}

We claim that the desired upper bound for $\rho$ holds for this choice of $c_3$, $c_4$. Suppose that the upper bound of \eqref{e:rhobds} fails; then we can define $t_0:=\inf\{t\ge 0: \rho_+(t) = c_3\exp(c_4 t)\}$.  Clearly $t_0>0$, since $\rho_+(0) \le c_3/2$ by definition of $c_3$.  

Let $x_+(t_0)$ denote the $x$-value where the maximum of $\rho(\cdot, t_0)$ is achieved and put 
\[
r_1: = \min\left\{ \left( 2c_2 \exp(c_1 t_0)\right)^{-\frac{1}{\a}}, r_0 \right\}.
\]
Then 
\begin{align*}
\p_t \rho(x_+(t_0), t_0) 
& \le \|q(t_0)\|_{L^\infty} \rho_+(t_0)^2 + \rho_+(t_0) \int_{|z|<r_1} \phi_\a(z)(\rho(x_+(t_0) + z, t_0) - \rho_+(t_0))\dz \\	
& \le \|q(t_0)\|_{L^\infty} \rho_+(t_0)^2 + \iota(r_1) \rho_+(t_0)(\cM - r_1\rho_+(t_0)) \\
& = [\|q(t_0)\|_{L^\infty} - r_1\iota(r_1)]\rho_+(t_0)^2  + \iota(r_1)\cM \rho_+(t_0).
\end{align*}
By choice of $r_1$, we have 
\[
r_1 \iota(r_1) \ge r_1^{-\a} \ge 2c_2 \exp(c_1 t_0).
\]
Combining this with the upper bound in \eqref{e:qupper2}, we continue the estimate above:
\begin{align*}
\p_t \rho(x_+(t_0), t_0) 
& \le [c_2 \exp(c_1 t_0) - 2c_2 \exp(c_1 t_0)]\rho_+(t_0)^2  + \iota(r_1)\cM \rho_+(t_0) \\
& = [\iota(r_1)\cM - c_2 \exp(c_1 t_0)\rho_+(t_0)]\rho_+(t_0).
\end{align*}
But then by our choices of $r_0$, $r_1$, $c_3$, $c_4$, and $t_0$, we have
\begin{align*}
\iota(r_1)\cM 
& \le 2\left[ \min\left\{ \left( 2c_2 \exp(c_1 t_0)\right)^{-\frac{1}{\a}}, r_0 \right\} \right]^{-1-\a}\cdot \cM \\
& = 2\cM \max \left\{ ( 2c_2\cdot (2c_2)^{\frac{1}{\a}} \exp(c_4 t_0), r_0^{-1-\a} \right\}\\
& \le c_2 \max \left\{ 4 \cM \left( 2c_2\right)^{\frac{1}{\a}}, \frac{2}{c_2} \cM r_0^{-1-\a} \right\} \exp(c_4 t_0) \\
& \le c_2 c_3 \exp(c_4 t_0) = c_2 \rho_+(t_0),
\end{align*}
which implies that 
\[
\p_t \rho(x_+(t_0), t_0)   \le c_2 \rho_+(t_0)^2[1 - \exp(c_1 t_0)]<0,
\]
contradicting the definition of $t_0$.  This proves the upper bound on $\rho$.  

In light of the relation $e = q \rho$, the upper bound \eqref{e:eupper} on $e$ is obtained by multiplying the upper bounds for $q$ and $\rho$.  	
\end{proof}

As noted above, this essentially completes the proof of the existence of a global-in-time regular solution for any initial data $(u_0, \rho_0)\in H^4\times H^{3+\a}$ away from vacuum. 

\subsection{Energy Equality and Bounds in $L^2 H^{\a/2}$}
\label{ss:EE}
Next, we recall that regular solutions satisfy the certain energy equalities \eqref{e:ee}, \eqref{e:rhoee}.  Multiplying the velocity equation by $\rho$ and the density equation by $u$, then adding the results together, we obtain the momentum equation:
\begin{equation}
\label{e:momentum}
(\rho u)_t + (\rho u^2)' = -\rho\L_\a(\rho u) + \rho u \L_\a \rho + \rho f.
\end{equation}
Multiply \eqref{e:momentum} by $u$ and add $\rho u$ times the velocity equation.  The result is
\begin{equation}
(\rho u^2)_t + (\rho u^3)' = -2\rho u[\L_\a(\rho u) - u \L_\a \rho] + 2\rho u f,
\end{equation}
or, after integration, 
\begin{align*}
\frac{\mathrm{d}}{\dt} \int_\T \rho u^2 \dx 
& = -\int_\T 2\rho u \L_\a (\rho u) - 2\rho u^2 \L_\a \rho\dx + 2\int_\T \rho u f\dx \\
& = -\int_\T 2\rho u \L_\a (\rho u) - \rho u^2 \L_\a \rho - \rho\L_\a(\rho u^2)\dx + 2\int_\T \rho u f\dx \\
& = - \int_\T \int_\R \rho(x)\rho(y) \frac{|u(x)-u(y)|^2}{|x-y|^{1+\a}}\dy\dx + 2\int_\T \rho u f\dx.
\end{align*}
We used the self-adjointness of $\L_\a$ to pass from the first to the second line.  Integrating in time, we obtain the following energy equality:
\[
\frac12 \int_\T \rho u^2(t)\dx + \int_0^t \int_\T \int_\R \rho(x)\rho(y) \frac{|u(x)-u(y)|^2}{|x-y|^{1+\a}}\dy\dx\ds = \frac12 \int_\T \rho_0 u_0^2\dx + \int_0^t \int_\T \rho u f \dx\ds,
\]
which is \eqref{e:ee}.  This equality also proves that $u$ is bounded in $L^2(0,T; H^{\a/2})$ by a constant depending on  $\|u_0\|_{L^\infty}$, $\|\rho_0\|_{L^\infty}$, $\|\rho_0^{-1}\|_{L^\infty}$, $\|e_0\|_{L^\infty}$, $\|f\|_{L^\infty_t W^{1,\infty}_x}$, $\cM$, $T$, and $\a$. 

The derivation of \eqref{e:rhoee} is similar.  We start with the density equation, multiply by $\rho$, and integrate, obtaining
\begin{align*}
\frac12 \frac{\mathrm{d}}{\dt} \int_\T \rho(t)^2\dx 
& = - \int_\T (\rho u)' \rho \dx 
= \frac12 \int_\T (\rho^2)' u \dx 
= -\frac12 \int_\T \rho^2 (e + \L_\a \rho)\dx \\
& = -\frac12 \int_\T e\rho^2 + \rho^2 \L_\a \rho\dx. 
\end{align*}
Symmetrizing, we get
\[
\int_\T \rho^2 \L_\a \rho\dx
= \frac12 \int_\T \int_\R (\rho^2(x)-\rho^2(y)) \frac{\rho(x) - \rho(y)}{|x-y|^{1+\a}}\dy\dx 
= \frac12 \int_\T \int_\R (\rho(x)+\rho(y)) \frac{|\rho(x) - \rho(y)|^2}{|x-y|^{1+\a}}\dy\dx.
\]
Therefore 
\[
\int_\T \rho(t)^2\dx + \frac12\int_0^t \int_\T \int_\R (\rho(x)+\rho(y)) \frac{|\rho(x)-\rho(y)|^2}{|x-y|^{1+\a}}\dy\dx\ds = \int_\T \rho_0^2\dx - \int_0^t \int_\T e\rho^2\dx \ds, 
\]
which is \eqref{e:rhoee}.  Thus $\rho$ is also bounded in $L^2(0,T; H^{\a/2})$ by a constant depending only on $\|\rho_0\|_{L^\infty_x}$, $\|\rho_0^{-1}\|_{L^\infty_x}$, $\|e_0\|_{L^\infty_x}$, $\|f\|_{L^\infty_t W^{1,\infty}_x}$, $\cM$, $\a$, and $T$.  Finally, since $L^\infty\cap H^{\a/2}$ is an algebra, we have that $\rho u$ is bounded in $L^2(0,T;H^{\a/2})$, with a bound that depends only on $\|f\|_{L^\infty_t W^{1,\infty}_x}$, $\cM$, $\a$, $T$, and the $L^\infty$ norms of $u_0$, $\rho_0$, $\rho_0^{-1}$, and $e_0$.

\subsection{Bounds in H\"older Spaces}
\label{ss:Holder1}
Let $[\cdot]_{C^\g(\T)}$ denote the H\"older seminorm
\[
[g]_{C^\g(\T)} = \sup_{\substack{x,y\in \T \\ x\ne y}} \frac{|g(x)-g(y)|}{|x-y|^\g},
\]
for any $g\in C^\g(\T)$.  Below we will write $C^\g$ for $C^\g(\T)$, with the understanding that this will always denote the seminorm with respect to spatial variables only (not including time).  As mentioned above, our construction of weak solutions will require bounds on $u$ and $\rho$ in some H\"older space.  The precise statement that we use is recorded below:
\begin{PROP}
Let $(u, \rho)$ be a regular solution on the time interval $[0,\infty)$.  There exists $\g>0$ such that $\rho$, $m=\rho u$, and $u$ satisfy bounds of the form 
\begin{equation}
\label{e:rhoHolder2}
[\rho(t)]_{C^{\g}}\le t^{-{\g}/\a} C_T, \quad t\in (0,T];
\end{equation}
\begin{equation}
\label{e:mHolder2}
[m(t)]_{C^{\g}}\le t^{-{\g}/\a} C_T, \quad t\in (0,T];
\end{equation}
\begin{equation}
\label{e:uHolder}
[u(t)]_{C^{\g}}\le t^{-{\g}/\a} C_T, \quad t\in (0,T].
\end{equation}
The constants $C_T$ may depend on $\|f\|_{L^\infty_t W^{1,\infty}_x}$, $\cM$, $\a$, $T$, and the $L^\infty$ norms of $u_0$, $\rho_0$, $\rho_0^{-1}$, and $e_0$.  The number $\g$ ultimately depends only on these same quantities.
\end{PROP}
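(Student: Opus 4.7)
The plan is to derive fractional drift-diffusion equations for $\rho$ and $m = \rho u$, and then to invoke off-the-shelf parabolic Hölder regularity. Substituting $u' = e + \L_\a \rho$ into the transport form of \eqref{e:maind} gives
\begin{equation*}
\rho_t + u \rho' + \rho \L_\a \rho = -e\rho.
\end{equation*}
For the momentum, one computes $m_t + (um)' = -\rho \L_\a m + m\L_\a\rho + \rho f$ by combining \eqref{e:mainv} with \eqref{e:maind}; expanding $(um)' = u'm + um'$ and using $mu' = me + m\L_\a\rho$ causes the two copies of $m\L_\a\rho$ to cancel, leaving
\begin{equation*}
m_t + u m' + \rho \L_\a m = -em + \rho f.
\end{equation*}
Thus both $w = \rho$ and $w = m$ solve a linear equation of the form $w_t + u w' + \rho \L_\a w = h$ with bounded drift $u$, bounded right-hand side $h$, and diffusion coefficient $\rho$ that is bounded above and uniformly away from zero on $[0,T]$, all by Proposition \ref{p:Linfty}.

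Viewing this as an integro-differential equation with kernel $K(x,z,t) = \rho(x,t)\phi_\a(z)$ satisfying the two-sided bound $c_0|z|^{-1-\a} \le K \le c_1|z|^{-1-\a}$ for $|z|$ small, one invokes the De Giorgi / Silvestre / Schwab--Silvestre Hölder regularity theory for such equations. This yields an exponent $\g > 0$ and a constant $C$, both depending only on $\|u\|_{L^\infty}$, the two-sided bounds on $\rho$, $\|h\|_{L^\infty}$, and $\a$, such that
\begin{equation*}
[w(t)]_{C^\g} \le C\, t^{-\g/\a}\bigl(\|w_0\|_{L^\infty} + T\|h\|_{L^\infty_{x,t}}\bigr), \quad t \in (0, T].
\end{equation*}
The self-similar scaling $t^{-\g/\a}$ is the natural parabolic scaling of the fractional heat equation. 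Applying this estimate with $w = \rho$ and $w = m$ (noting $\|m_0\|_{L^\infty} \le \|\rho_0\|_{L^\infty}\|u_0\|_{L^\infty}$ and that $\|h\|_{L^\infty}$ in each case is controlled by the listed data through Proposition \ref{p:Linfty}) produces \eqref{e:rhoHolder2} and \eqref{e:mHolder2}.

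For \eqref{e:uHolder} on $u = m/\rho$, set $\rho_{\min} := \inf_{\T\times[0,T]}\rho > 0$ (guaranteed by Proposition \ref{p:Linfty}) and apply the elementary quotient bound
\begin{equation*}
[u]_{C^\g} \le \rho_{\min}^{-1}[m]_{C^\g} + \rho_{\min}^{-2}\|m\|_{L^\infty}[\rho]_{C^\g},
\end{equation*}
inserting \eqref{e:rhoHolder2}--\eqref{e:mHolder2} on the right-hand side. The main obstacle is the Hölder regularity input itself, especially in the supercritical range $\a \in (0,1)$ where the merely bounded drift $u$ is not covered by the cleanest Silvestre-type theorems; here one must either quote a refinement that exploits the specific structure of the equation (bounded coefficient $\rho$ with a positive lower bound in front of a symmetric kernel) or adapt the modulus-of-continuity technique of \cite{DKRT} to accommodate the inhomogeneous right-hand side $h$. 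A secondary task is verifying that the dependencies of $\g$ and $C_T$ on the listed data propagate correctly through Proposition \ref{p:Linfty}, which is straightforward since all of the relevant $L^\infty$ quantities are controlled there in the desired way.
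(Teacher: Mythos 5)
Your overall strategy is the same as the paper's: write $\rho$ and $m=\rho u$ as solutions of the drift--diffusion equations $w_t+uw'+\rho\L_\a w=h$ with kernel $\rho(x,t)|z|^{-1-\a}$, invoke Silvestre's H\"older estimate to get \eqref{e:rhoHolder2}--\eqref{e:mHolder2}, and recover \eqref{e:uHolder} from the quotient $u=m/\rho$. However, you leave a genuine gap exactly at the point you flag as ``the main obstacle'': in the supercritical range $\a\in(0,1)$, Silvestre's theorem does \emph{not} apply with a merely bounded drift, and you do not resolve this --- you only list two possible escape routes (quoting an unspecified refinement, or redoing the modulus-of-continuity argument of \cite{DKRT}), neither of which you carry out and neither of which is what is actually needed.

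The missing idea is that the system hands you the required drift regularity for free. Since $e=u'-\L_\a\rho$ with $e$ and $\rho$ bounded in $L^\infty$ uniformly on $[0,T]$ (by Proposition \ref{p:Linfty}), applying $\p_x^{-1}$ to the relation $u'=e+\L_\a\rho$ and taking H\"older norms shows that $u\in L^\infty(0,T;C^{1-\a})$, with a bound depending only on the listed data. But $C^{1-\a}$ in space, uniformly in time, is precisely the drift hypothesis of Silvestre's theorem in the range $\a\in(0,1)$ (the theorem as stated asks for $C^{1-\a}_{x,t}$, but only the spatial H\"older continuity uniformly in time is used in its proof). With this observation the application of \cite{Silvestre} goes through in both ranges of $\a$, the exponent $\g$ depends only on $\|u\|_{L^\infty(0,T;C^{1-\a})}$ (resp.\ $\|u\|_{L^\infty}$ when $\a\ge1$), and hence only on the admissible quantities. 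Note also that once $u\in L^\infty(0,T;C^{1-\a})$ is in hand for $\a<1$, the estimate \eqref{e:uHolder} is trivially satisfied in that case, so the quotient argument is only doing real work for $\a\ge1$. The rest of your write-up (the derivation of the two parabolic equations, the scaling $t^{-\g/\a}$, and the bookkeeping of constants) matches the paper and is fine.
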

\begin{REMARK}
For the purposes of constructing a weak solution, the bounds 
\[
[u(t)]_{C^{\g}}\le C_{\d,T}, 
\quad [\rho(t)]_{C^{\g}}\le C_{\d,T},
\quad \quad 
t\in [\d,T].
\]  
would suffice.  (Here $C_{\d,T}$ is a constant that may depend on the same quantities as $C_T$ above, and also may depend on $\d$.) However, the explicit bound \eqref{e:rhoHolder2} will be used later to control $\rho'$.
\end{REMARK}

\begin{REMARK}
The main Theorem of \cite{Silvestre} plays a key role here.  In the case when $\a\in (0,1)$, the hypothesis of \cite{Silvestre} asks for the drift $u$ to be $C^{1-\a}_{x,t}$ (in both time and space).  However, all that is really used in the proof there is the H\"older regularity in space, uniformly in time, i.e. $L^\infty(0,T; C^{1-\a})$.  This is fortunate for us, since the latter is exactly the norm we can control for $u$, by the equality $e = u' - \L_\a\rho$.  Therefore, when we refer to the result of \cite{Silvestre} here and below, it should be understood that (when $\a\in (0,1)$) we consider the version of the statement with the relaxed hypothesis $u\in L^\infty(0,T;C^{1-\a})$.
\end{REMARK}

\begin{proof}
The bound \eqref{e:uHolder} follows from \eqref{e:rhoHolder2}, \eqref{e:mHolder2}, and the bounds from the previous subsection, since
\[
u(y) - u(x) = \rho(y)^{-1}[m(y)-m(x)] + u(x)\rho(y)^{-1}[\rho(x) - \rho(y)].
\]
As for \eqref{e:rhoHolder2} and \eqref{e:mHolder2}, we begin by writing the density and momentum equations in parabolic form:
\begin{equation}
\label{e:parabdens}
\rho_t + u\rho' +\rho \L_\a \rho = - e\rho.
\end{equation}
\begin{equation}
\label{e:parabmo}
m_t + u m' +\rho \L_\a m = - em + \rho f.
\end{equation}
The diffusion operator $\rho \L_\a$ for these equations has kernel $K(x,h,t) = \rho(x,t)|h|^{-1-\a}$, so they are of the type considered in \cite{Silvestre}.  The quantities $-e\rho$ and $-em + \rho f$ play the role of (bounded) forcing terms.  In order to apply the main result of \cite{Silvestre}, we split into two cases.  In the first case, we assume $\a\in (0,1)$.  In this case, we apply $\p_x^{-1}$ to the relation $u' = e + \L_\a \rho$, then take $C^{1-\a}$ norms, to conclude that $u(t)$ is bounded in $C^{1-\a}$, with bounds that depend only on $\|f\|_{L^\infty_t W^{1,\infty}_x}$, $\cM$, $\a$, $T$, and the $L^\infty$ norms of $u_0$, $\rho_0$, $\rho_0^{-1}$, and $e_0$.  (Notice that \eqref{e:uHolder} is actually trivially satisfied in this case.)  Therefore the hypothesis of the main theorem of \cite{Silvestre} is satisfied, and we may conclude that that there exists $\g>0$, depending only on $\|u\|_{L^\infty(0,T;C^{1-\a})}$, such that 
\begin{equation}
\label{e:rhoHolder}
[\rho(t)]_{C^{\g}} \le \frac{C_T}{t^{\g/\a}} (\|\rho\|_{L^\infty(\T\times (0,T))} + \|e\rho\|_{L^\infty(\T\times (0,T))} ),
\end{equation}
\begin{equation}
\label{e:mHolder}
[m(t)]_{C^{\g}} \le \frac{C_T}{t^{\g/\a}} (\|m\|_{L^\infty(\T\times (0,T))} + \|em - \rho f\|_{L^\infty(\T\times (0,T))} ),
\end{equation}	
where the constant $C_T$ here depends only on $T$ and  $\|u\|_{L^\infty(0,T; C^{1-\a})}$.  Absorbing the $L^\infty$ norms on the right hand sides of \eqref{e:rhoHolder} and \eqref{e:mHolder}, and recalling that $C^{1-\a}$ depends only on $\|f\|_{L^\infty_t W^{1,\infty}_x}$, $\cM$, $\a$, $T$, and the $L^\infty$ norms of $u_0$, $\rho_0$, $\rho_0^{-1}$, and $e_0$, we obtain \eqref{e:rhoHolder2} and \eqref{e:mHolder2}, with the claimed dependencies.

The situation is similar if $\a\in [1,2)$.  In this \cite{Silvestre} gives us $\g>0$, depending only on $\|u\|_{L^\infty(\T\times (0,T))}$, such that \eqref{e:rhoHolder} and \eqref{e:mHolder} continue to hold, with $C_T$ depending only on $T$ and $\|u\|_{L^\infty(\T\times (0,T))}$.  Since $\|u\|_{L^\infty(\T\times (0,T))}$ itself depends only on $\|f\|_{L^\infty_t W^{1,\infty}_x}$, $\cM$, $\a$, $T$, and the $L^\infty$ norms of $u_0$, $\rho_0$, $\rho_0^{-1}$, and $e_0$, this completes the proof of the second case.
\end{proof}

\begin{REMARK}
In the case $\a\in (1,2)$, we can apply $\p_x^{-\a}$ to the relation $\L_\a \rho = u' - e$, then take $C^{\a-1}$ norms, to conclude that $\rho$ is bounded in $C^{\a-1}$, with bounds that depend only on $\|f\|_{L^\infty_t W^{1,\infty}_x}$, $\cM$, $\a$, $T$, and the $L^\infty$ norms of $u_0$, $\rho_0$, $\rho_0^{-1}$, and $e_0$.  In particular, the bound \eqref{e:rhoHolder2} is trivially satisfied in this case.  
\end{REMARK}

\subsection{Bounds on Time Derivatives}
\label{ss:dt}

\begin{PROP}
Let $(u, \rho)$ be a regular solution on the time interval $[0,\infty)$ and let $e=u'-\L_\a \rho$.  Then for any $T>0$, $\p_t u$ is bounded in $L^2(0,T;H^{-\a/2})$. Furthermore, $\p_t \rho$ and $\p_t e$ are bounded in $L^\infty(0,T;H^{-1})$. In each case, the bounds depend only on $\|f\|_{L^\infty_t W^{1,\infty}_x}$, $\cM$, $\a$, $T$, and the $L^\infty$ norms of $u_0$, $\rho_0$, $\rho_0^{-1}$, and $e_0$.	
\end{PROP}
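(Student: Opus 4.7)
The plan is to read off each time derivative directly from the equations \eqref{e:mainv}, \eqref{e:maind}, and \eqref{e:econs}, and then use the $L^\infty$ bounds from Proposition~\ref{p:Linfty} together with the $L^2H^{\a/2}$ bound on $\rho u$ obtained in Section \ref{ss:EE} to place each term in the appropriate negative-order Sobolev space.

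First I would handle $\p_t u$. Using the reformulated velocity equation \eqref{e:ue}, I have
\[
\p_t u = -ue - \L_\a(\rho u) + f.
\]
The first term and last terms are pointwise bounded in $\T\times[0,T]$ by Proposition~\ref{p:Linfty} and the hypothesis on $f$, hence bounded in $L^\infty(0,T;L^2) \hookrightarrow L^\infty(0,T;H^{-\a/2})$. For the nonlocal term, I use the elementary mapping property $\|\L_\a g\|_{H^{-\a/2}}\lesssim \|g\|_{H^{\a/2}}$ (duality plus the symbol estimate for $\L_\a$). Since Section \ref{ss:EE} gives $\rho u\in L^2(0,T;H^{\a/2})$ with bound depending only on $\|f\|_{L^\infty_tW^{1,\infty}_x}$, $\cM$, $\a$, $T$, and the $L^\infty$ norms of $u_0$, $\rho_0$, $\rho_0^{-1}$, $e_0$, this gives $\L_\a(\rho u)\in L^2(0,T;H^{-\a/2})$ with the correct dependencies, yielding the desired bound on $\p_t u$.

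Second, I would handle $\p_t \rho$ and $\p_t e$ together, since they are both estimated by the same elementary mechanism: a spatial derivative of an $L^\infty$ function lies in $L^\infty(0,T;H^{-1})$, via $\|g'\|_{H^{-1}}\le \|g\|_{L^2}\le (2\pi)^{1/2}\|g\|_{L^\infty}$. From \eqref{e:maind} I have $\p_t \rho = -(\rho u)'$, and $\rho u$ is bounded in $L^\infty(\T\times[0,T])$ by Proposition~\ref{p:Linfty}. From \eqref{e:econs} I have $\p_t e = -(ue)' + f'$; the quantity $ue$ is bounded pointwise, and $f'\in L^\infty(\T\times[0,T])\subset L^\infty(0,T;H^{-1})$ directly by assumption. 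Combining these observations gives the claimed $L^\infty(0,T;H^{-1})$ bounds with the right dependencies.

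This proposition is essentially bookkeeping once the bounds of Sections \ref{ss:FO}--\ref{ss:EE} are in hand, so I do not anticipate any real obstacle; the only mild subtlety is remembering to use the reformulation \eqref{e:ue} (rather than \eqref{e:mainv} directly) to avoid having to estimate a stray $uu'$, and to use duality to place $\L_\a(\rho u)$ in $H^{-\a/2}$ rather than attempting any further regularity for $\rho u$.
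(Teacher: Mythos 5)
Your proposal is correct and follows essentially the same route as the paper: read the time derivatives off the equations \eqref{e:ue}, \eqref{e:maind}, \eqref{e:econs}, put the pointwise-bounded terms into $L^2$, handle $\L_\a(\rho u)$ via the $H^{\a/2}\to H^{-\a/2}$ mapping property together with the $L^2H^{\a/2}$ bound on $\rho u$ from the energy equality, and treat the spatial derivatives of $L^\infty$ quantities by duality against $H^1$. The paper phrases this through explicit duality pairings $\langle g(s),\f\rangle_{H^{-\a/2}\times H^{\a/2}}$, but the substance and the tracked dependencies are identical.
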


\begin{proof} 
For $\f\in H^{\a/2}(\T)$, we have
\[
\langle u(t), \f \rangle_{H^{-\a/2}\times H^{\a/2}} - \langle u(s), \f \rangle_{H^{-\a/2}\times H^{\a/2}} = \int_s^t \langle g(s), \f \rangle_{H^{-\a/2}\times H^{\a/2}}\ds,  
\]
where 
\[
\langle g(s), \f \rangle_{H^{-\a/2}\times H^{\a/2}} = \int_\T -ue(s)\f - \L_{\a/2}(\rho u)(s) \L_{\a/2}\f + f(s)\f\dx.
\]
Since 
\[
\|g(s)\|_{H^{-\a/2}} \le C[\|u(s)\|_{L^\infty} \|e(s)\|_{L^\infty} + \|\rho u(s)\|_{H^{\a/2}} + \|f(s)\|_{L^\infty}],
\]
the desired bound on $\p_t u$ thus follows from the results of Sections  \ref{ss:FO} and \ref{ss:EE}.  

For $\f\in H^{1}(\T)$, we have
\[
\langle \rho(t), \f \rangle_{H^{-1}\times H^1} - \langle \rho(s), \f \rangle_{H^{-1}\times H^1} = \int_s^t \int_\T \rho u(s) \f' \dx \ds.  
\]
Therefore 
\[
\|\p_t \rho (s)\|_{H^{-1}} \le C\|\rho u(s)\|_{L^\infty},
\]
so that the desired bound on $\p_t \rho$ follows from the results of Section  \ref{ss:FO}.  The bound for $\p_t e$ can be proved in the same way.  
\end{proof}

\section{Weak Solutions}
\label{s:weak}

\subsection{Properties of General Weak Solutions}
\label{ss:wkpropsgen}

Let $(u, \rho, e)$ be a weak solution on the time interval $[0,T]$ associated to the initial data $(u_0, \rho_0, e_0)\in L^\infty \times L^\infty \times L^\infty$.  The purpose of this section is to record three simple facts about such a general weak solution, namely
\begin{itemize}
	\item The quantity $e$ satisfies a weak form of \eqref{e:econs}.  That is, for all $\f \in C^\infty(\T\times [0,T])$ and a.e. $t\in [0,T]$, we have
	\begin{equation}
	\label{e:ewk}
	\int_\T e(t)\f(t)\dx - \int_\T e_0\f(0)\dx - \int_0^t \int_\T e \p_t \f \dx \ds = \int_0^t \int_\T ue \f' + f' \f \dx\ds.
	\end{equation}
	\item The solution $(u,\rho, e)$ converges weak-$*$ in $L^\infty$ to the initial data.
	\item The weak time derivative of $u$ is a well-defined element of $L^2(0,T;H^{-\a/2})$; the weak time derivatives of $\rho$ and $e$ are well-defined elements of $L^\infty(0,T; H^{-1})$.
\end{itemize}

To see that the first of these is true, note first that \eqref{e:compat2} implies that for all for all $\f \in C^\infty(\T\times [0,T])$ and a.e. $t\in [0,T]$, we have
\[
\int_\T e \f(t) + u \f'(t) + \rho \L_\a\f(t)  \dx = 0.
\]
For any $t\in [0,T]$ for which the above holds and any $\f\in C^\infty(\T\times [0,T])$, we have then that 
\begin{align*}
& \int_\T e(t)\f(t)\dx - \int_\T e_0\f(0)\dx \\
& = - \left[ \int_\T u \f'(t)\dx - \int_\T u_0 \f'(0)\dx\right] - \left[ \int_\T \rho \L_\a\f(t)\dx - \int_\T \rho_0 \L_\a\f(0)  \dx \right] \\
& = - \left[ \int_0^t \int_\T u\p_t \f' \dx \ds -ue\f' - \rho u \L_\a\f' + f\f'\dx\ds \right] - \left[ \int_0^t \int_\T \rho \p_t \L_\a \f \dx \ds + \rho u \L_\a \f'\dx\ds \right] \\
& = - \int_0^t \int_\T u(\p_t \f)' + \rho \L_\a(\p_t \f)\dx\ds + \int_0^t \int_\T ue \f' + f'\f\dx\ds \\
& = \int_0^t \int_\T e \p_t \f \dx\ds + \int_0^t \int_\T ue\f' + f'\f\dx\ds. 
\end{align*}
This proves \eqref{e:ewk}, for a.e. $t\in [0,T]$ and all $\f\in C^\infty(\T\times [0,T])$.

To observe the weak-$*$ convergence to the initial data, substitute any (time-independent) $\f\in C^\infty(\T)$ into the weak formulation \eqref{e:weakv}, \eqref{e:weakd} or into \eqref{e:ewk}.  Clearly $\int_\T (u(t)-u_0)\f\dx \to 0$ as $t\to 0^+$, since the right side of \eqref{e:weakv} is an integral from $0$ to $t$ of an integrable quantity.  Since $C^\infty(\T)$ is dense in $L^1(\T)$, we conclude that $\int_\T (u(t) - u_0)\f \dx \to 0$ as $t\to 0^+$, for any $\f\in L^1(\T)$, i.e. $u(t)\stackrel{*}{\rightharpoonup}u_0$ weak-$*$ in $L^\infty$, as $t\to 0^+$.  The situation is similar for $\rho$ and $e$.  

Finally, the statement regarding the time derivatives is proved in a manner similar to that of Section \ref{ss:dt}.  

\subsection{Construction of a Weak Solution}
\label{ss:wkexist}

In this section we construct a weak solution as a subsequential limit of regular solutions with mollified initial data, as the mollification parameter tends to zero.  The following version of the Aubin-Lions-Simon compactness Lemma (\cite{Simon}, c.f. Theorem II.5.16 in \cite{BoyerFabrie}) will allow us to use the bounds from Section \ref{s:Reg} to choose an appropriate subsequence.  
\begin{LEMMA}
	Let $X\subset Y\subset Z$ be Banach spaces, where the embedding $X\subset Y$ is compact and the embedding $Y\subset Z$ is continuous.  Assume $p,r\in [1,\infty]$, and define for $T>0$ the following space: 
	\[
	E = \{v\in L^p(0,T;X): \frac{\mathrm{d}v}{\mathrm{d}t}\in L^r(0,T;Z)\}.
	\]
	\begin{enumerate}
		\item If $p<\infty$, the embedding $E\subset L^p(0,T; Y)$ is compact.
		\item If $p=\infty$ and $r>1$, then the embedding $E\subset C([0,T],Y)$ is compact.
	\end{enumerate}
\end{LEMMA}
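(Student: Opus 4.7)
The plan is to reduce the statement to two classical compactness tools — the Kolmogorov--Riesz--Fréchet criterion in case (1) and the Arzelà--Ascoli theorem in case (2) — by combining a quantitative form of the compact embedding $X\subset Y$ with uniform control of time translations drawn from the derivative bound in $L^r(0,T;Z)$.

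First I would establish Ehrling's lemma: compactness of $X\subset Y$ together with continuity of $Y\subset Z$ yields, for every $\varepsilon>0$, a constant $C_\varepsilon>0$ such that
\[
\|v\|_Y \le \varepsilon \|v\|_X + C_\varepsilon \|v\|_Z \quad \text{for all } v\in X.
\]
The standard proof is by contradiction: if it fails, take a sequence $v_n$ of unit $X$-norm violating the inequality for a fixed $\varepsilon$; by compactness of $X\subset Y$ a subsequence converges in $Y$, and the failure of the inequality forces $\|v_n\|_Z\to 0$, so the limit is zero in $Z$, hence in $Y$, contradicting $\|v_n\|_Y\ge \varepsilon$. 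Next, for any $v$ with $v'\in L^r(0,T;Z)$, the fundamental theorem of calculus together with H\"older's inequality gives
\[
\|v(t+h)-v(t)\|_Z \le h^{1-1/r}\,\|v'\|_{L^r(0,T;Z)},
\]
with the exponent $1-1/r$ interpreted in the usual way when $r=\infty$. Substituting this into Ehrling's estimate yields, for every $\varepsilon>0$,
\[
\|v(t+h)-v(t)\|_Y \le \varepsilon\bigl(\|v(t+h)\|_X+\|v(t)\|_X\bigr) + C_\varepsilon\,h^{1-1/r}\,\|v'\|_{L^r(0,T;Z)}.
\]

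For a bounded sequence $\{v_n\}\subset E$, this inequality drives both conclusions. In case (1), raising to the $p$-th power and integrating in $t$ produces
\[
\int_0^{T-h}\|v_n(t+h)-v_n(t)\|_Y^p\,dt \le 2\varepsilon^p\|v_n\|_{L^p(0,T;X)}^p + C(\varepsilon,r,T)\,h^{(1-1/r)p}\|v_n'\|_{L^r(0,T;Z)}^p,
\]
so sending $h\to 0$ and then $\varepsilon\to 0$ verifies the translation-continuity hypothesis of Kolmogorov--Riesz--Fr\'echet in $L^p(0,T;Y)$; boundedness in $L^p(0,T;Y)$ is immediate from $X\subset Y$. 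In case (2), taking a supremum in $t$ and exploiting $r>1$ so that $h^{1-1/r}\to 0$ gives equicontinuity of $\{v_n\}$ in $C([0,T],Y)$, while pointwise relative compactness of $\{v_n(t)\}$ in $Y$ at each $t$ follows from the bound in $L^\infty(0,T;X)$ combined with $X\subset\subset Y$; Arzel\`a--Ascoli then supplies a subsequence convergent in $C([0,T],Y)$.

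The main obstacle is case (2). The condition $r>1$ cannot be relaxed, since $r=1$ produces no modulus of continuity in $h$ from the integral estimate and equicontinuity fails. Second, the assertion that $\{v_n(t)\}$ is relatively compact in $Y$ at a given time requires an honest pointwise value in $X$, which an $L^\infty(0,T;X)$ bound does not literally furnish; the standard workaround is to restrict attention to Lebesgue points of $t\mapsto v_n(t)$ in $X$, or to mollify in time, extract a limit on this denser set, and then transfer the resulting compactness back to the original sequence using the already-established equicontinuity.
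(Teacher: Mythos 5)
The paper does not prove this lemma at all; it is quoted as a known result from Simon \cite{Simon} (see also Theorem II.5.16 of \cite{BoyerFabrie}), so there is no in-paper argument to compare against. Your proof is the classical one---Ehrling's inequality plus a time-translation estimate, fed into the Kolmogorov--Riesz--Fr\'echet criterion for case (1) and Arzel\`a--Ascoli for case (2)---and the overall strategy is sound. Two points need repair, both in case (1). First, boundedness in $L^p(0,T;Y)$ is \emph{not} a sufficient companion to translation equicontinuity when $Y$ is infinite dimensional: the vector-valued Kolmogorov--Riesz--Fr\'echet criterion (Simon's Theorem 1) also requires relative compactness in $Y$ of the averaged sets $\bigl\{\int_{t_1}^{t_2} v_n\,dt\bigr\}$. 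Indeed, a sequence $v_n(t)\equiv y_n$ with $\{y_n\}$ bounded but non-compact in $Y$ has vanishing translations and is bounded in $L^p(0,T;Y)$, yet is not relatively compact there. The needed compactness does hold in your setting---the averages are bounded in $X$ by the $L^p(0,T;X)$ bound, and $X\subset Y$ is compact---but this is exactly where the compact embedding enters and it must be said. Second, your pointwise bound $\|v(t+h)-v(t)\|_Z\le h^{1-1/r}\|v'\|_{L^r(0,T;Z)}$ degenerates at $r=1$ (the exponent is $0$), so the displayed estimate in case (1) does not tend to $0$ as $h\to 0$ for fixed $\varepsilon$, even though the lemma asserts case (1) for every $r\in[1,\infty]$. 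The fix is to use the integrated form of the translation estimate: by Fubini,
\[
\int_0^{T-h}\|v(t+h)-v(t)\|_Z\,dt\le h\,\|v'\|_{L^1(0,T;Z)},
\]
and to combine this with the crude uniform bound $\|v(t+h)-v(t)\|_Z\le\|v'\|_{L^1(0,T;Z)}$ to control the $p$-th power before applying Ehrling. With these two adjustments (and the care you already flag about pointwise values in case (2)), the argument is complete; note that the paper only ever invokes case (2) with $r=2$, for which your proof as written already suffices.
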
	

Let $\g$ be as in Section \ref{ss:Holder1}.  In the notation of the Aubin-Lions-Simon Lemma, we set 
\[
X_T = C^{\g}, 
\quad 
Y = C^0,
\quad 
Z = H^{-1},
\quad 
E_{\d,T} = \{v\in L^\infty(\d,T;C^{\g}): \p_t v\in L^2(\d,T;H^{-1})\}.
\]
The conclusion of the Lemma is then that the embedding $E_{\d, T} \subset C([\d,T]; C^0)$ is compact for any $T>\d>0$.

Choose $(u_0, \rho_0, e_0)\in L^\infty \times L^\infty \times L^\infty$, satisfying $\rho_0^{-1}\in L^\infty$ and the compatibility condition \eqref{e:compat}.
Let $\eta\in C_c^\infty(\R)$ be a standard mollifier ($\int \eta = 1$, $\supp \eta\subset \{|x|\le 1\}$), and let $f_\e$ denote the convolution of $f$ by $\e^{-1}\eta(\e^{-1}\cdot)$: $f_\e(x)=\e^{-1}\int_\R \eta(\e^{-1}y)f(x-y)\dy$.  Let $(u^\e, \rho^\e)$ denote the global strong solution associated to the initial data $((u_0)_\e, (\rho_0)_\e)$ and let $e^\e = (u^\e)' - \L_\a \rho^\e$.  Note that $(e_0)_\e =  (u_0)'_\e - \L_\a (\rho_0)_\e = e^\e(0)$ automatically.

\begin{CLAIM}
	The sequences $u^\e$ and $\rho^{\e}$ are bounded in $E_{\d,T}$ for any $T>\d>0$.
\end{CLAIM}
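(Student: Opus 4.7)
The plan is to observe that the claim follows directly from the bounds established in Section \ref{s:Reg}, once one checks that the mollified data and force satisfy the hypotheses of those results with constants uniform in $\e$. This is precisely the payoff of the careful bookkeeping in Section \ref{s:Reg}: all the constants there depend only on $\cM$, $\a$, $T$, $\|f\|_{L^\infty_t W^{1,\infty}_x}$, and the $L^\infty$ norms of $u_0$, $\rho_0$, $\rho_0^{-1}$, $e_0$, each of which is controlled uniformly in $\e$ after mollification.

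For the uniformity of the mollified data: since $\eta \ge 0$ with $\int\eta = 1$, convolution against $\e^{-1}\eta(\e^{-1}\cdot)$ is an $L^\infty$ contraction, giving
\[
\|(u_0)_\e\|_{L^\infty}\le \|u_0\|_{L^\infty},\quad \|(\rho_0)_\e\|_{L^\infty}\le \|\rho_0\|_{L^\infty},\quad \|f_\e\|_{L^\infty_t W^{1,\infty}_x}\le \|f\|_{L^\infty_t W^{1,\infty}_x}.
\]
The pointwise lower bound on $\rho_0$ is preserved by convolution against a nonnegative kernel, yielding $\|((\rho_0)_\e)^{-1}\|_{L^\infty}\le \|\rho_0^{-1}\|_{L^\infty}$. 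Because spatial convolution commutes with $\L_\a$, we have $e^\e(0) = (u_0)'_\e - \L_\a(\rho_0)_\e = (e_0)_\e$, so $\|e^\e(0)\|_{L^\infty}\le \|e_0\|_{L^\infty}$; the compatibility condition is inherited from \eqref{e:compat} in the same way. The mass is preserved, $\cM^\e = \cM$. Finally, for each fixed $\e>0$, $f_\e$ is smooth in $x$ with $L^\infty$-in-$t$ bounds, hence $f_\e\in L^\infty(0,\infty;H^4)$, and Theorem \ref{t:Reg} produces the global regular solution $(u^\e,\rho^\e)$.

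For the conclusion: Proposition \ref{p:Linfty} and Section \ref{ss:EE} yield uniform-in-$\e$ bounds on $u^\e,\rho^\e,(\rho^\e)^{-1},e^\e$ in $L^\infty(\T\times[0,T])$ and on $u^\e,\rho^\e$ in $L^2(0,T;H^{\a/2})$. The H\"older estimates of Section \ref{ss:Holder1} then produce a single $\g>0$ and a constant $C_T$ — both depending only on quantities already controlled uniformly in $\e$ — such that $[u^\e(t)]_{C^\g} + [\rho^\e(t)]_{C^\g} \le t^{-\g/\a}C_T$ for $t\in(0,T]$. Restricting to $t\in[\d,T]$ bounds the prefactor by $\d^{-\g/\a}$, and combined with the $L^\infty$ bound this gives $u^\e,\rho^\e$ bounded in $L^\infty(\d,T;C^\g)$ uniformly in $\e$. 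For the time derivatives, Section \ref{ss:dt} provides uniform bounds on $\p_t u^\e$ in $L^2(0,T;H^{-\a/2})$ and on $\p_t\rho^\e$ in $L^\infty(0,T;H^{-1})$; since $\a\in(0,2)$, the embedding $H^{-\a/2}\hookrightarrow H^{-1}$ and the finite-measure embedding $L^\infty(\d,T)\hookrightarrow L^2(\d,T)$ upgrade these to uniform $L^2(\d,T;H^{-1})$ bounds on both $\p_t u^\e$ and $\p_t\rho^\e$. This completes the verification. There is no substantive obstacle here — the proof is a bookkeeping exercise that simply collects the outputs of Section \ref{s:Reg}, which were established precisely in order to be applied at this stage.
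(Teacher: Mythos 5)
Your proposal is correct and follows essentially the same route as the paper: both reduce the claim to (i) uniform-in-$\e$ bounds in $L^\infty(\d,T;C^\g)$ via the H\"older estimates of Section \ref{ss:Holder1} and (ii) uniform bounds on the time derivatives in $L^2(\d,T;H^{-1})$ via Section \ref{ss:dt}, with the key observation that all constants depend only on $\cM$, $\a$, $T$, $\d$, $\|f\|_{L^\infty_t W^{1,\infty}_x}$, and the $L^\infty$ norms of the initial data, each of which is not increased by mollification. Your write-up merely spells out the bookkeeping (contractivity of the mollifier, preservation of the lower bound on $\rho_0$, the embeddings $H^{-\a/2}\hookrightarrow H^{-1}$ and $L^\infty(\d,T)\hookrightarrow L^2(\d,T)$) that the paper leaves implicit.
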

\begin{proof}
	Fix $T>\d>0$. In order to prove the claim, one needs to prove the following two statements: 
	\begin{enumerate}
		\item $u^\e$ and $\rho^\e$ are bounded sequences of $L^\infty(\d,T;C^{\g})$.
		\item $\p_t u^\e$ and $\p_t \rho^\e$ are bounded sequences of $L^2(\d,T;H^{-1})$.
	\end{enumerate}
	We have essentially proved these statements already.  We provide the remaining details for half of the first statement only; the rest follows the same reasoning.
	
	Section \ref{ss:Holder1} establishes that the norm of $u^\e$ in $L^\infty(\d,T;C^{\g})$ can be bounded above by a quantity that depends only on $\|f\|_{L^\infty_t W^{1,\infty}_x}$, $\cM$, $\a$, $T$, $\d$, and the $L^\infty$ norms of $(u_0)_\e$, $(\rho_0)_\e$, $(\rho_0)_\e^{-1}$, and $(e_0)_\e$. But these $L^\infty$ norms are bounded by those of $u_0$, $\rho_0$, $\rho_0^{-1}$, and $e_0$, respectively, and the remaining quantities are fixed.  Therefore $u^\e$ is a bounded sequence in $L^\infty(\d,T;C^{\g})$.  
\end{proof}

Applying the Aubin-Lions-Simon Lemma, we can now choose a subsequence $\{\e_k\}$, tending to zero as $k\to \infty$, such that $u^{\e_k}$ and $\rho^{\e_k}$ converge (strongly) in $C([2^{-N},2^N];C^0)$, with $N$ any natural number.  Using a standard diagonal argument, we obtain a further subsequence, which we continue to denote by $\e_k$, such that $u^{\e_k}$ and $\rho^{\e_k}$ converge to functions $u$ and $\rho$, respectively, in $C_{\loc}((0,\infty);C^0)$. 

Using the same logic as in the Claim above, we also have that $e^\e$ is bounded in $L^\infty(\T\times [0,T])$ and both $u^\e$ and $\rho^\e$ are bounded in $L^2(0,T;H^{\a/2})$.  Therefore we may choose a further subsequence (still denoted $\e_k$) such that $e^{\e_k}$ converges weak-$*$ in $L^\infty(\T\times [0,T])$ to some $e\in L^\infty(\T\times [0,T])$, and so that $u^{\e_k}$ and $\rho^{\e_k}$ converge weakly in $L^2(0,T;H^{\a/2})$ to $u$ and $\rho$. Then we can use a diagonal argument as above to send $T\to \infty$. To summarize, there exists a subsequence $\{\e_k\}$ and a triple $(u, \rho, e)$, such that as $k\to \infty$, we have 
\[ 
u^{\e_k}\to u \text{ and } \rho^{\e_k} \to \rho \text{ strongly in } C_\loc((0,\infty); C^0);
\]
\[
u^{\e_k} \rightharpoonup u \text{ and } \rho^{\e_k} \rightharpoonup \rho \text{ weakly in } L^2_{\loc}(0,\infty; H^{\a/2});
\]
\[
e^{\e_k} \stackrel{*}{\rightharpoonup} e \text{ weak-}*\text{ in } L^\infty_{\loc}(\T\times [0,\infty)).
\]
Now $(u^{\e_k}, \rho^{\e_k})$ is a regular solution (therefore $(u^{\e_k}, \rho^{\e_k}, e^{\e_k})$ is a weak solution) for each $k$. We can therefore consider each term in each equation of the weak formulation and easily see that the above convergences guarantee that $(u,\rho, e)$ satisfies the weak formulation. This completes the existence part of Theorem \ref{t:wk}.  The construction gives H\"older continuity on compact sets of $\T\times (0,\infty)$.  Indeed, if $\g$ is the H\"older exponent associated to the interval $[0,T]$ as in Section \ref{ss:Holder1}, then for any $\widetilde{\g}\in (0,\g)$, the convergences $u^{\e_k}\to u$ and $\rho^{\e_k}\to \rho$ can be taken in $L^\infty(\d,T;C^{\widetilde{\g}})$ for any fixed $\d>0$.  

\begin{REMARK}
If $\a\ne 1$, slightly more information is available.  If $0<\a<1$, then the above construction can be modified slightly to give $u^{\e_k}\to u$ in  $C([0,\infty); C^{1-\a-\k})$, for any $\k\in (0,1-\a)$; if $1<\a<2$, then we can obtain $\rho^{\e_k}\to \rho$ in  $C([0,\infty); C^{\a-1-\k})$ for any $\k\in (0,\a-1)$. 
\end{REMARK}

\subsection{Energy Inequality for Constructed Solutions}
We now prove that the solutions constructed above satisfy \eqref{e:ei} and \eqref{e:rhoei}. To prove these inequalities, we essentially use the fact that they are true (with equality) for regular enough solutions, then pass to the limit $k\to \infty$ in the sequence $(u^{\e_k}, \rho^{\e_k}, e^{\e_k})$ from the proof of existence above.  However, since the solution behaves a little better away from time zero, we initially work on $[\d,t]$ for some $\d>0$.  We prove \eqref{e:ei} first.  We start with the equality 
\begin{equation}
\label{e:eekd}
\int_\T \rho^{\e_k} (u^{\e_k})^2(s)\dx\bigg|^t_\d \!+\! \int_\d^t \! \int_\T \int_\R \rho^{\e_k}(x)\rho^{\e_k}(y) \frac{|u^{\e_k}(x)-u^{\e_k}(y)|^2}{|x-y|^{1+\a}}\dy\dx\ds 
= 2\int_\d^t \int_\T \rho^{\e_k} u^{\e_k} f \dx\ds.
\end{equation}

The first term and the forcing term are easily seen to converge to their natural limits, by uniform convergence of $\rho^{\e_k}$, $u^{\e_k}$ on any time interval $[\d,T]$.  To deal with the second term on the left, we write 
\begin{equation}
\int_\d^t \int_\T \int_\R [\rho^{\e_k}(x)\rho^{\e_k}(y)-\rho(x)\rho(y)] \frac{|u^{\e_k}(x)-u^{\e_k}(y)|^2}{|x-y|^{1+\a}}\dy\dx\ds
\to 0,
\quad \text{ as }k\to \infty,
\end{equation}
which is valid by uniform convergence of $\rho^{\e_k}$ away from time zero, as well as the $L^2 H^{\a/2}$ bound on $u^{\e_k}$, which is uniform in $k$.  We also have
\begin{equation}
\int_\d^t \!\int_\T \!\int_\R \!\rho(x)\rho(y) \frac{|u(x) \!-\! u(y)|^2}{|x-y|^{1+\a}}\dy\dx\ds
\le \liminf_{k\to \infty} \int_\d^t \!\int_\T \!\int_\R \!\rho(x)\rho(y) \frac{|u^{\e_k}(x)\!-\!u^{\e_k}(y)|^2}{|x-y|^{1+\a}}\dy\dx\ds,
\end{equation}
by weak lower semicontinuity.  Taking limits in \eqref{e:eekd} thus yields
\begin{equation}
\label{e:eid}
\int_\T \rho u^2(t)\dx + \int_\d^t \int_\T \int_\R \rho(x)\rho(y) \frac{|u(x)-u(y)|^2}{|x-y|^{1+\a}}\dy\dx\ds \le \int_\T \rho u^2(\d)\dx + 2 \int_\d^t \int_\T \rho u f \dx\ds
\end{equation}
Next, we note that  
\begin{equation}
\label{e:ei0d}
\int_\T \rho^{\e_k} (u^{\e_k})^2(\d)\dx \le \int_\T (\rho_0)_{\e_k} (u_0)_{\e_k}^2\dx + 2 \int_0^\d \int_\T \rho^{\e_k} u^{\e_k} f \dx\ds.
\end{equation}
This is obtained from the energy equality for $(u^{\e_k}, \rho^{\e_k})$ on $[0,\d]$, by dropping the enstrophy term.  We can estimate the force term on the right by $C\d$, where $C$ is independent of $k$ and $\d$ (but may depend on $t$), and then take $k\to \infty$.  The term on the left converges to its natural limit for the same reason as above; the initial data term converges to its natural limit by standard properties of mollifiers.  We are left with 
\[
\int_\T \rho u^2(\d)\dx \le \int_\T \rho_0 u_0^2\dx + C\d.
\]
Combining this with \eqref{e:eid}, we obtain 
\begin{equation}
\label{e:eid2}
\int_\T \rho u^2(t)\dx +\! \int_\d^t \!\int_\T \int_\R \rho(x)\rho(y) \frac{|u(x)-u(y)|^2}{|x-y|^{1+\a}}\dy\dx\ds \le \int_\T \rho_0 u_0^2\dx + 2\int_\d^t \!\int_\T \rho u f \dx\ds +C\d.
\end{equation}
And now, taking $\d\to 0$ yields \eqref{e:ei}.

The inequality \eqref{e:rhoei} is proved in a very similar way.  The only difference in approach is for the last term, on $[\d,t]$. We write
\[
\int_\d^t \int_\T e^{\e_k}(\rho^{\e_k})^2 \dx\ds - \int_\d^t \int_\T e \rho^2 \dx\ds
= \int_\d^t \int_\T e^{\e_k} [(\rho^{\e_k})^2-\rho^2] \dx\ds + \int_\d^t \int_\T (e^{\e_k}-e)\rho^2 \dx\ds.
\]
We use uniform convergence of the $\rho^{\e_k}$ on $[\d,t]$ to treat the first term and weak-$*$ convergence of $e^{\e_k}$ to treat the second. This finishes the proof of the inequalities \eqref{e:ei}, \eqref{e:rhoei}.  

\subsection{The Case of a Compactly Supported Force: Considerations for the Constructed Solutions}

\label{ss:0forcewk}

If the force $f$ is identically zero, or, more generally, if it is compactly supported in time, then there are several implications for the solutions we have constructed.  We take a moment to collect a few of these.  

\begin{enumerate}
\item If $f\equiv 0$, then the constants $c_1$ and $c_4$ from \eqref{e:rhobds}--\eqref{e:eupper} are both zero, so that $u$, $\rho$, $\rho^{-1}$, and $e$ can all be bounded above for all time by constants.  If $f$ is compactly supported in time, then all these quantities are still uniformly bounded, but the constants we can use to bound them will be larger, due to the potential growth during the time interval where $f$ is supported.  These uniform bounds will survive the limiting process used to construct weak solutions.

\item As a consequence of the uniform boundedness of $u$, $\rho$, $\rho^{-1}$, and $e$, the quantity $\g$ from Section \ref{ss:Holder1} can be taken to be independent of $T$.  Thus, the H\"older regularization will survive the limiting process (with H\"older exponent $\g-\kappa$ for any $\k\in (0,\g)$).

\item As soon as the force is turned off, we have a fast alignment of the velocity field; that is, the velocity amplitude $A(t) = \max_{x,y}|u(x,t) - u(y,t)|$ decays exponentially fast for regular solutions.  In particular, the case of zero force gives
\[
A(t) \le A(0)e^{-\cM \iota(\pi) t},
\]
where $\iota(r)=\inf_{|x|<r}\phi_\a(x)$ and $\phi_\a$ is the kernel of $\L_\a$, as above.  See Lemma 1.1 of \cite{STII} for the short proof of this statement. Therefore the alignment survives the limiting process used to construct weak solutions, so that (the constructed) weak solutions also enjoy the alignment property if the force is compactly supported.  
\end{enumerate}

The constructed weak solutions do not possess quite enough regularity for us to prove that they experience flocking (which also requires convergence of the density profile) in the case of a compactly supported force; however, we will see that flocking occurs for strong solutions under the assumption of compactly supported force.

\section{Energy Balance for Weak Solutions}
\label{s:en}

In this section, we provide conditions which guarantee that the natural energy laws hold for weak solutions.  We emphasize that the criteria we consider apply to any weak solutions, not just those weak solutions which can be constructed as in the previous section.  

To begin with, we note that it turns out to be easier to work with a momentum-based equation when proving \eqref{e:ee}.  However, due to the limited regularity of our weak solutions, we must prove that such a formulation is valid for our solutions.  In this proof and those below, we will make use of Littlewood-Paley theory, for which we give some basic notation presently.  Additional notation will be introduced as needed.  

\subsection{Notation for Littlewood-Paley Projections and Besov Spaces}
For a given function $g$, we denote by $g_q$ the projection of $g$ onto the $q$th Littlewood-Paley component, $q\in \{-1,0\}\cup \N$.  See for example \cite{CCFS} for the (standard) definitions of these projections.  We use the notation 
\[
g_{\le Q} = \sum_{q=-1}^Q g_q;
\quad \quad 
g_{> Q} = \sum_{q=Q+1}^\infty g_q.
\]
The Besov norm $B^s_{p,r}(\T)$ is defined by 
\[
\|g\|_{B^s_{p,r}(\T)} = \left\| \l_q^s \|g_q\|_{L^p(\T)} \right\|_{\ell_q^r},
\]
where we denote $\l_q:=2^q$.  Here $p,r\in [1,\infty]$, $s\in \R$. The Besov space $B^s_{p,r}$ is the space of tempered distributions whose $B^s_{p,r}$ norm is finite.  And we denote by $B^s_{p,c_0}$ the subspace of $B^s_{p,\infty}$ consisting of those elements $g$ such that $\limsup_{q\to \infty} \l_q^s \|u_q\|_{L^p} = 0$.  Finally, we note that $H^s = B^s_{2,2}$.

\subsection{The Weak Momentum Equation}

For smooth functions $f$ and $g$, we define 
\[
\cT(f,g) = -\L_\a(fg) - g\L_\a f.
\]
When $(\rho, u,e)$ is a weak solution, we can make sense of the expression $\rho \cT(\rho, u)$ in a weak sense.  Define $X= H^{\a/2}\cap L^\infty$, and for each $s>0$, let $\rho \cT(\rho, u)(s)$ denote the element of $X^*$ given by 
\[
\langle \rho \cT(\rho, u), \f \rangle_{X^*, X} =  \int  -\L_{\a/2}(\rho u) \L_{\a/2}(\rho \f) + \L_{\a/2}(\rho) \L_{\a/2}(\rho u \f) \dx.
\]

\begin{PROP}
Let $(u, \rho, e)$ be a weak solution on the time interval $[0,T]$. Then for each $\f\in C^\infty(\T\times [0,T])$ and a.e. $t\in [0,T]$, we have that 
\begin{equation}
\label{e:moment}
\begin{split}
\int \rho u \f(t) & \dx - \int \rho_0 u_0 \f(0)\dx - \int_0^t \int \rho u \p_t \f(s)\dx\ds \\ & = \int_0^t \int \rho u^2 \f' \dx\ds + \int_0^t \langle \rho \cT(\rho, u), \f \rangle_{X^*, X}\ds + \int_0^t \int \rho f \f\dx\ds.
\end{split}
\end{equation}
\end{PROP}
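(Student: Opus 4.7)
The plan is to derive \eqref{e:moment} by testing the weak velocity equation \eqref{e:weakv} against $v = \rho^\e \f$ and then passing to the limit $\e\to 0$. Here $\f \in C^\infty(\T \times [0,T])$ is fixed and $\rho^\e(x,t) = (\rho(\cdot, t) * \psi_\e)(x)$ denotes the spatial mollification of $\rho$ by a standard mollifier $\psi_\e$. The first step is to justify that $v = \rho^\e \f$ is an admissible test function in \eqref{e:weakv}. For each $t$, $\rho^\e$ is smooth in $x$, and from \eqref{e:weakd} one computes the pointwise identity $\p_t \rho^\e = -((\rho u)^\e)'$, which is likewise smooth in $x$ and uniformly bounded. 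Hence $v \in C^0_t C^\infty_x$ with $\p_t v \in L^\infty_{t,x}$. A standard time-mollification argument, together with the regularities $u, \rho u \in L^\infty \cap L^2_t H^{\a/2}_x$ and $e, f \in L^\infty$, shows that \eqref{e:weakv} continues to hold for such $v$.

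Substituting $v = \rho^\e \f$, expanding $\p_t v = (\p_t \rho^\e)\f + \rho^\e \p_t \f$, and integrating by parts in $x$ on the $\p_t\rho^\e = -((\rho u)^\e)'$ piece transforms \eqref{e:weakv} into the identity
\begin{align*}
\int_\T \rho^\e u \f \Big|_0^t - \int_0^t \int_\T \rho^\e u \p_t\f \dx\ds
& = \int_0^t \int_\T (\rho u)^\e u' \f \dx\ds + \int_0^t \int_\T (\rho u)^\e u \f' \dx\ds \\
& \quad + \int_0^t \int_\T \left[-u e \rho^\e \f - \rho u \L_\a(\rho^\e\f) + f\rho^\e\f\right] \dx\ds.
\end{align*}

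Next, I pass to the limit $\e\to 0$. Strong convergence $\rho^\e \to \rho$ and $(\rho u)^\e \to \rho u$ in $L^2_t H^{\a/2}_x \cap L^p_{t,x}$ (any $p<\infty$) handles most terms. Two nonlocal terms require self-adjointness of $\L_{\a/2}$: (i) one rewrites $\int \rho u\, \L_\a(\rho^\e \f) = \int \L_{\a/2}(\rho u)\L_{\a/2}(\rho^\e\f)$ and takes limits in the $L^2\cdot L^2$ pairing; (ii) for $\int (\rho u)^\e u' \f$, since $u'$ has only negative regularity, one first invokes the distributional identity $u' = e + \L_\a \rho$ coming from \eqref{e:compat2} to split the term as $\int (\rho u)^\e e \f + \int (\rho u)^\e \L_\a\rho \f$, and then rewrites the second summand as $\int \L_{\a/2}\rho \cdot \L_{\a/2}((\rho u)^\e\f)$, which converges to $\int \L_{\a/2}\rho \cdot \L_{\a/2}(\rho u\f)$. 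Validity of these manipulations uses that $\rho u \f \in L^2_t H^{\a/2}_x$, which holds by the algebra property of $L^\infty \cap H^{\a/2}$.

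Collecting limits, the two occurrences of $\int_0^t\int_\T e \rho u \f$ cancel, and the remaining combination $-\int \L_{\a/2}(\rho u)\L_{\a/2}(\rho\f) + \int \L_{\a/2}\rho\cdot\L_{\a/2}(\rho u\f)$ is precisely $\langle \rho \cT(\rho, u), \f\rangle$ by its definition in the statement, producing \eqref{e:moment}. The main delicate step is the handling of $\int (\rho u)^\e u'\f$: because $u'$ has only distributional meaning, passing to the limit forces both the use of the compatibility identity \eqref{e:compat2} and two applications of self-adjointness, which is also the structural reason the weak momentum equation must be expressed via the bilinear form $\langle \rho \cT(\rho, u), \cdot\rangle$ rather than a pointwise integral.
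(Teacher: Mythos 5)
Your argument is correct, but it takes a genuinely different route from the paper's. The paper proceeds symmetrically: it substitutes $(\rho_{\le Q}\f)_{\le Q}$ into the weak velocity equation \emph{and} $(u_{\le Q}\f)_{\le Q}$ into the weak density equation, adds the two identities so that the cross terms $\int u_{\le Q}\,\p_t\rho_{\le Q}\,\f$ and $\int \rho_{\le Q}\,\p_t u_{\le Q}\,\f$ recombine via the product rule into $\p_t(\rho_{\le Q}u_{\le Q})$, and uses the projected compatibility relation $e_{\le Q}=u'_{\le Q}-\L_\a\rho_{\le Q}$ to eliminate $u'_{\le Q}$ before letting $Q\to\infty$. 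You instead test only the velocity equation, against $\rho^\e\f$, compute $\p_t\rho^\e=-((\rho u)^\e)'$ from the density equation, and after a spatial integration by parts resolve the resulting pairing $\langle u',(\rho u)^\e\f\rangle$ by the distributional identity $u'=e+\L_\a\rho$ together with self-adjointness of $\L_{\a/2}$; I have checked that the signs work out, the two $\int e\rho u\f$ terms cancel, and the surviving nonlocal combination is exactly $\langle\rho\cT(\rho,u),\f\rangle$ as defined. Your route is arguably more elementary and makes the emergence of $\cT$ transparent, while the paper's Littlewood--Paley setup keeps everything band-limited and reuses the same machinery as the later flux and dissipation estimates; both hinge on the compatibility condition and the algebra property of $L^\infty\cap H^{\a/2}$. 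The one step you should spell out is the extension of \eqref{e:weakv} to test functions that are smooth in $x$ but only Lipschitz in $t$ (via time mollification, together with the fact that the a.e.-$t$ exceptional set can be chosen independently of the test function) --- though note that the paper's own test functions $(\rho_{\le Q}\f)_{\le Q}$ have precisely the same time regularity, so this is not an additional burden relative to the published argument.
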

\begin{proof}
Substitute the test function $(\rho_{\le Q} \f)_{\le Q}$ into the weak velocity equation. We obtain
\begin{equation}
\label{e:veleq}
\begin{split}
\int_\T & \rho_{\le Q} u_{\le Q}(t) \f(t)\dx - \int_\T (\rho_0)_{\le Q} (u_0)_{\le Q} \f(0)\dx - \int_0^t \int_\T u_{\le Q} (\p_t \rho_{\le Q} \f + \rho_{\le Q} \p_t \f) \dx \ds \\
& = \int_0^t \int_\T -(ue)_{\le Q} \rho_{\le Q}\f - (\rho u)_{\le Q} \L_\a(\rho_{\le Q}\f) + \rho_{\le Q}f_{\le Q} \f\dx\ds.
\end{split}
\end{equation}
Then substitute $(u_{\le Q} \f)_{\le Q}$ into the weak density equation:
\begin{equation}
\label{e:denseq}
\begin{split}
\int_\T & \rho_{\le Q} u_{\le Q}(t) \f(t)\dx - \int_\T (\rho_0)_{\le Q} (u_0)_{\le Q} \f(0)\dx - \int_0^t \int_\T \rho_{\le Q} (\p_t u_{\le Q} \f + u_{\le Q} \p_t \f) \dx \ds \\
& = \int_0^t \int_\T (\rho u)_{\le Q} (u'_{\le Q}\f + u_{\le Q} \f') \dx\ds.
\end{split}
\end{equation}
Finally, project the compatibility condition onto the first $Q$ modes:
\begin{equation}
\label{e:compateq}
e_{\le Q} = u'_{\le Q} - \L_\a \rho_{\le Q}
\end{equation}
We use \eqref{e:compateq} to eliminate $u'_{\le Q}$ from \eqref{e:denseq}, then we add the result to \eqref{e:veleq}. We obtain
\begin{align*}
& \int_\T \rho_{\le Q} u_{\le Q}(t) \f(t)\dx - \int_\T (\rho_0)_{\le Q} (u_0)_{\le Q} \f(0)\dx - \int_0^t \int_\T \rho_{\le Q} u_{\le Q} \p_t \f \dx \ds \\
& \hspace{5 mm} = \int_0^t \int_\T (\rho u)_{\le Q} u_{\le Q} \f'\dx \ds + \int_0^t \int [(\rho u)_{\le Q} e_{\le Q}-\rho_{\le Q} (ue)_{\le Q}]\f\dx\ds \\
& \quad \quad \quad + \int_0^t \int (\rho u)_{\le Q}[\f \L_\a \rho_{\le Q} - \L_\a(\rho_{\le Q} \f)]\dx\ds + \rho_{\le Q}f_{\le Q} \f\dx\ds.
\end{align*}
Note that we have used the product rule and the fundamental theorem of calculus to simplify the left side of this equation.  It should now be clear that each integral converges to its natural limit, so that the equation \eqref{e:moment} holds.
	
\end{proof}
\begin{REMARK}	
It seems likely that the converse direction is also true, i.e., that replacing \eqref{e:weakv} with \eqref{e:moment} should give an equivalent weak formulation.  To try to prove this, one might try the following strategy: Denote $U:=\rho_{\le Q}^{-1}(\rho u)_{\le Q}$ and substitute $(\rho_{\le Q}^{-1}\f)_{\le Q}$ into \eqref{e:moment}.  Subtract from this equation the result of substituting $\left(\rho_{\le Q}^{-1} U\f \right)_{\le Q}$ into \eqref{e:weakd}. After performing some manipulations, one obtains
\begin{equation*}
\begin{split}
& \int U \f(t) \dx - \int U \f(0)\dx - \int_0^t \int U \p_t \f(t)\dx \\ 
& = \int_0^t \int [(\rho u^2)_{\le Q}- (\rho u)_{\le Q} U] \left( \frac{\f}{\rho_{\le Q}} \right)' \dx\ds + \int_0^t \int (\rho \cT(\rho, u))_{\le Q} \frac{\f}{\rho_{\le Q}} - \f \cT(\rho_{\le Q}, u_{\le Q}) \dx\ds \\
& \quad + \int_0^t \int - \rho_{\le Q} u_{\le Q} \L_\a \f - u_{\le Q} e_{\le Q} \f + \frac{(\rho f)_{\le Q}}{\rho_{\le Q}} \f \dx\ds  + \frac12 \int_0^t (U^2 - u_{\le Q}^2) \f' \dx\ds.
\end{split}
\end{equation*}	
All integrals on the left side and the last two integrals on the right side obviously converge to the natural limits.  The second term on the right side also converges to zero, though this requires some work (involving computations similar to those of Section \ref{ss:dissconv}).  However, it appears that the first term on the right side requires some additional smoothness in order to pass to the limit; the Onsager-type assumption \eqref{e:besovee} below is sufficient.  Therefore, we can currently claim only that the two weak formulations are equivalent under this additional assumption. As noted below, \eqref{e:besovee} is automatically satisfied when $\a\ge 1$.
\end{REMARK}	

\subsection{The Energy Budget}
Let $E_{\le Q}(t)$ denote the energy associated to scales $\l_q$ for $q\le Q$, and let $E(t)$ denote the total energy:
\[
E_{\le Q}(t)=\frac12 \int \frac{(\rho u)_{\le Q}^2}{\rho_{\le Q}}(t)\dx;
\quad \quad 
E(t)=\frac12\int \rho u^2(t)\dx.
\]  
The energy budget relation at scales $q\le Q$ is as follows:
\begin{equation}
\label{e:ebudgtot}
E_{\le Q}(t) - E_{\le Q}(0) = \int_0^t \Pi_Q(s)\ds - \varepsilon_Q(t) + \int_0^t \int (\rho f)_{\le Q} \cdot U\dx\ds.
\end{equation}
Here $\Pi_Q(s)$ is the flux through scales of order $Q$ due to the nonlinearity, defined by 
\begin{equation} 
\label{e:flux}
\Pi_Q = \int F_Q(\rho, u) U' \dx,
\end{equation}
\begin{equation}
\label{e:commutator}
F_Q(\rho, u) = (\rho u^2)_{\le Q} - U (\rho u)_{\le Q}, 
\end{equation}
and $\varepsilon_Q$ and $\int_0^t \int (\rho f)_{\le Q}\cdot U\dx\ds$ represent the change in energy due to the local interactions and the external force, respectively, at scales $q\le Q$. Now $\varepsilon_Q$ is given by 
\begin{equation*}
\label{e:dissQ}
\varepsilon_Q(t) =
-\int_0^t \int_\T (\rho \cT(\rho, u))_{\le Q} U\dx\ds.  
\end{equation*}
Also denote 
\begin{equation*}
\label{eq:disstot}
\varepsilon(t) = \frac12 \int_0^t \int_\R  \int_\T \rho(x) \rho(y) \frac{|u(x) - u(y)|^2}{|x-y|^{1+\a}} \dx\dy\ds
\end{equation*}
We do not give a derivation of the energy budget relation here; however, \eqref{e:ebudgtot} can be derived following essentially the same procedure as in \cite{LS2016}.

We aim to show that for appropriate $(\rho, u)$ and all $t\in [0,T]$, we have (as $Q\to \infty$) that $E_{\le Q}(t)\to E(t)$, $\int_0^t \Pi_Q(s)\,ds\to 0$, $\varepsilon_Q(t)\to \varepsilon(t)$, and $\int_0^t\int (\rho f)_{\le Q} \cdot U\dx\ds \to \int_0^t \int \rho u\cdot f\dx\ds$.  These convergences will immediately imply that the  energy balance relation holds for $(\rho, u)$.

Now, it was already shown in \cite{LS2016} that $\int_0^t \Pi_Q(s)\dt\to 0$ as $Q\to \infty$ whenever 
\[
\rho\in L^4(0,T;B^{\frac13}_{4,\infty}), 
\quad 
u\in L^4(0,T;B^{\frac13}_{4,c_0}).
\]
We do not expect to improve on the smoothness parameter here, but we have a bit of additional information here, namely the fact that $u\in L^\infty L^\infty$.  We can consequently weaken the integrability assumptions; see below.  We also claim that  $\varepsilon_Q\to \varepsilon$ holds in fact for all weak solutions, since such solutions satisfy $\rho, u \in L^\infty L^\infty\cap L^2 H^{\a/2}$, which is really all that is needed in order to pass to the limit for this term.  Finally, \cite{LS2016} also shows that the term $\int_0^t \int (\rho f)_{\le Q} \cdot U\dx\ds$ converges to its natural limit, and we therefore omit a treatment of this term.  In the following two subsections, we will prove that the natural energy law \eqref{e:ee} holds under the assumption that
\begin{equation}
\label{e:besovee}
	\rho\in L^3(0,T;B^{\frac13}_{3,\infty}), 
	\quad 
	u\in L^3(0,T;B^{\frac13}_{3,c_0}).
\end{equation}
Now \eqref{e:besovee} is automatically satisfied if $\a\in [1,2)$, since $L^\infty L^\infty \cap L^2 H^{1/2}\subset L^3 B^{1/3}_{3,3}$ by interpolation.  Therefore we will prove (the much more difficult half of) Theorem \ref{t:en} over the course of the next two subsections.  The proof of the other half (actually, a more precise statement) is contained in Section \ref{ss:ebrho}.

\subsection{Conditional Convergence of the Nonlinear Term}

We recall some notation and a few facts from \cite{LS2016}.  Let $a\in [1,\infty]$, $s\in (0,1)$; let $f$ be a real-valued function. Define the following:
\[
K^s_q = \left\{ \begin{array}{lcl}
\l_q^{s-1}, 	 & & q\ge 0; \\
\l_q^{s},		 & & q<0;
\end{array}\right.
\hspace{5 mm}
d_{a,q}^s (f) = \l_q^s\|f_q \|_{L^a};
\hspace{5 mm}
D_{a,Q}^s(f) = \sum_{q=-1}^\infty K^s_{Q-q} d^s_{a,q}(f).
\]
Note in particular that
\begin{equation}
\label{eq:Ddsim}
\limsup_{Q\ra \infty} D^s_{a,Q}(f) \sim \limsup_{q\to \infty} d^s_{a,q}(f).
\end{equation}
where the similarity constant depends only on $s$. 

\begin{PROP}\label{p:biest}
	For $f\in B^s_{a,\infty}$, $g\in L^\infty$, $a\in [1,\infty]$, $s\in (0,1)$, we have the following estimates:
	\begin{align}
	\|f(\cdot - y) - f(\cdot)\|_a & \lesssim (\l_Q |y| + 1)\l_Q^{-s} D^s_{a,Q}(f) \\
	\label{e:endpt}
	\|(fg)_{\le Q} - f_{\le Q} g_{\le Q} \|_{a} & \lesssim \l_Q^{-s} D_{a,Q}^s(f) \|g\|_\infty\\
	\label{e:gradflow}
	\| f_{\le Q}' \|_a 
	& \lesssim \l_Q^{1 - s} D^s_{a,Q}(f),\\
	\label{e:highs}
	\|f_{>Q}\|_a & \le \l_Q^{-s} D_{a,Q}^s(f)
	\end{align}
	If additionally $h\in B^t_{b,\infty}$, $t\in (0,1)$, $b\in [1,\infty]$, $\frac1c = \frac1a + \frac1b$, then 
	\begin{equation}
	\label{e:comm}
	\|(fh)_{\le Q} - f_{\le Q} h_{\le Q} \|_{c}
	\lesssim \l_Q^{-s-t} D^s_{a,Q}(f) D^t_{b,Q}(h)
	\end{equation}
\end{PROP}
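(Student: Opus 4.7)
The plan is to prove the four component estimates first (the translation bound, the gradient bound \eqref{e:gradflow}, the high-frequency tail \eqref{e:highs}, and the $L^\infty$ commutator \eqref{e:endpt}) and then derive the bilinear commutator \eqref{e:comm} from the same identity used for \eqref{e:endpt}. The approach is standard Littlewood--Paley analysis in the style of Constantin--E--Titi, following \cite{CCFS} and \cite{LS2016}.

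The two pure-summation estimates come first. For \eqref{e:highs}, the triangle inequality gives $\|f_{>Q}\|_a \le \sum_{q>Q} \l_q^{-s} d^s_{a,q}(f)$, and since $\l_q^{-s}/\l_Q^{-s} = 2^{s(Q-q)} = K^s_{Q-q}$ for $q>Q$, this matches the tail of $\l_Q^{-s} D^s_{a,Q}(f)$ exactly. For \eqref{e:gradflow}, Bernstein's inequality gives $\|f_q'\|_a \lesssim \l_q \|f_q\|_a$; summing over $q\le Q$ and using $\l_q^{1-s}/\l_Q^{1-s} = 2^{(1-s)(q-Q)} = K^s_{Q-q}$ for $q\le Q$ yields the bound. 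For the translation estimate, I split at scale $Q$: the mean value theorem gives $\|f_q(\cdot-y) - f_q\|_a \le |y|\,\|f_q'\|_a \lesssim |y|\l_q^{1-s} d^s_{a,q}(f)$ for $q\le Q$, while the triangle inequality gives $\|f_q(\cdot-y) - f_q\|_a \le 2\l_q^{-s} d^s_{a,q}(f)$ for $q>Q$. Reorganizing the two partial sums with the weights $K^s_{Q-q}$ produces the $(\l_Q|y|+1)\l_Q^{-s} D^s_{a,Q}(f)$ bound, where the $\l_Q|y|$ piece arises from the low-frequency half and the $1$ from the high-frequency half.

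The key tool for both commutators is the Constantin--E--Titi identity: letting $\phi_{\le Q}$ denote the convolution kernel of $P_{\le Q}$ (normalized so that $\int \phi_{\le Q} = 1$), a direct expansion of the right-hand side gives
\begin{equation*}
(fg)_{\le Q}(x) - f_{\le Q}(x)\, g_{\le Q}(x) = \int \phi_{\le Q}(y)\,[f(x-y) - f_{\le Q}(x)]\,[g(x-y) - g_{\le Q}(x)]\,dy.
\end{equation*}
For \eqref{e:endpt}, I apply Minkowski's inequality followed by H\"older with $g$ placed in $L^\infty$, noting that $\|g(\cdot-y) - g_{\le Q}\|_\infty \le 2\|g\|_\infty$. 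The already-proved translation and tail bounds control $\|f(\cdot-y) - f_{\le Q}\|_a \lesssim (\l_Q|y|+1)\l_Q^{-s} D^s_{a,Q}(f)$. Since $\phi_{\le Q}(y) = \l_Q\phi_0(\l_Q y)$ for a Schwartz function $\phi_0$, the weighted integral $\int|\phi_{\le Q}(y)|(\l_Q|y|+1)\,dy$ is a universal constant. For \eqref{e:comm}, the same identity applies, but I use H\"older with $1/c = 1/a + 1/b$; both factors now receive a translation-plus-tail bound (for $f$ with exponent $s$, for $h$ with exponent $t$), and the integral $\int|\phi_{\le Q}(y)|(\l_Q|y|+1)^2\,dy$ is again a universal constant.

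The only real subtlety --- a bookkeeping point rather than a genuine obstacle --- is verifying that the implicit constants in $\lesssim$ are independent of $Q$. This follows from the Schwartz decay of $\phi_0$, which makes the moments $\int|\phi_0(u)|\,|u|^k\,du$ finite for all $k$, so rescaling $\phi_{\le Q}$ to $\phi_0$ absorbs every factor of $(\l_Q|y|)^k$ into a fixed constant depending only on $k$.
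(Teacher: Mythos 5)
Your proof is correct. The paper itself does not prove Proposition \ref{p:biest}; it simply recalls it from \cite{LS2016}, and the argument used there (and in \cite{CCFS}) is exactly the one you give: the pure-summation bounds \eqref{e:gradflow}, \eqref{e:highs} and the translation estimate via Bernstein and the weights $K^s_{Q-q}$, followed by the Constantin--E--Titi identity plus Minkowski/H\"older for \eqref{e:endpt} and \eqref{e:comm}, with the $Q$-uniformity of the constants coming from the scale-invariant moments of the mollifier. The only cosmetic imprecision is the bound $\|g(\cdot-y)-g_{\le Q}\|_\infty\le 2\|g\|_\infty$, which should read $\le (1+\|\phi_0\|_{L^1})\|g\|_\infty$ since the Littlewood--Paley kernel is not nonnegative; this is of course absorbed into the implicit constant.
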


\begin{lemma}
$F_Q(\rho, u)$ can be written as
\begin{equation}
\label{e:FQdecomp}
\begin{split}
F_Q(\rho, u) & =  r_Q(\rho, u, u) - \frac{1}{\rho_{\le Q}}[(\rho u)_{\le Q} - \rho_{\le Q} u_{\le Q}]^2 + \rho_{> Q}u_{>Q}\otimes u_{>Q} \\ & \hspace{5 mm} + 2 [(\rho u)_{\le Q} - \rho_{\le Q} u_{\le Q}] u_{>Q} + \rho[(u^2)_{\le Q} - u_{\le Q}^2],
\end{split}
\end{equation}
where 
\[
r_Q(\rho, u, u) = \int \widetilde{h}_Q(y)[\rho(x-y) - \rho(x)][u(x-y) - u(x)]^2\dy,
\]
and $\widetilde{h}_Q$ is a Schwartz function.  
\end{lemma}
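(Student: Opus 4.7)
The identity is a purely algebraic decomposition organized around the triple-product Constantin--E--Titi (CET) commutator. Abbreviate the two commutators
\[
C_{\rho u} := (\rho u)_{\le Q} - \rho_{\le Q} u_{\le Q}, \qquad C_{uu} := (u^2)_{\le Q} - u_{\le Q}^2.
\]
Since $U = \rho_{\le Q}^{-1} (\rho u)_{\le Q} = u_{\le Q} + \rho_{\le Q}^{-1} C_{\rho u}$, squaring the identity $(\rho u)_{\le Q} = \rho_{\le Q} u_{\le Q} + C_{\rho u}$ and dividing by $\rho_{\le Q}$ yields the useful bookkeeping identity
\[
U (\rho u)_{\le Q} = \rho_{\le Q} u_{\le Q}^2 + 2 u_{\le Q} C_{\rho u} + \rho_{\le Q}^{-1} C_{\rho u}^2.
\]

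The first step of the plan is to establish the triple-product CET identity
\[
r_Q(\rho, u, u) = (\rho u^2)_{\le Q} - 2 u (\rho u)_{\le Q} + u^2 \rho_{\le Q} - \rho (u^2)_{\le Q} + 2 \rho u \, u_{\le Q} - \rho u^2.
\]
To obtain this, I would represent the Littlewood--Paley cutoff $(\cdot)_{\le Q}$ as convolution with a Schwartz kernel $\widetilde h_Q$ normalized so that $\int \widetilde h_Q = 1$, expand the product $[\rho(x-y)-\rho(x)][u(x-y)-u(x)]^2$ pointwise in $y$, and integrate. Each of the six resulting monomials becomes a product of one of $\rho,u,\rho u, u^2, \rho u^2$ (evaluated at $x$) with its low-pass projection. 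This step is the only place where the specific structure of the filter enters; the rest of the proof is polynomial manipulation.

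The second step is to solve the CET identity for $(\rho u^2)_{\le Q}$ and substitute into the definition $F_Q = (\rho u^2)_{\le Q} - U(\rho u)_{\le Q}$, together with the expression for $U(\rho u)_{\le Q}$ derived above. The third step is to expand $u = u_{\le Q} + u_{>Q}$ and $\rho = \rho_{\le Q} + \rho_{>Q}$ in the remaining terms. The key rewrites are
\[
2 u (\rho u)_{\le Q} - u^2 \rho_{\le Q} = 2 u_{\le Q} (\rho u)_{\le Q} + 2 u_{>Q} C_{\rho u} - \rho_{\le Q}(u_{\le Q}^2 + u_{>Q}^2),
\]
\[
\rho u^2 - 2\rho u u_{\le Q} = \rho(u_{>Q}^2 - u_{\le Q}^2), \qquad \rho (u^2)_{\le Q} - \rho u_{\le Q}^2 = \rho C_{uu},
\]
together with $2 u_{\le Q}(\rho u)_{\le Q} = 2\rho_{\le Q} u_{\le Q}^2 + 2 u_{\le Q} C_{\rho u}$ and $(\rho - \rho_{\le Q}) u_{>Q}^2 = \rho_{>Q} u_{>Q}^2$. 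After substitution, the contributions $2\rho_{\le Q} u_{\le Q}^2$ and $2 u_{\le Q} C_{\rho u}$ cancel exactly against their counterparts in $U(\rho u)_{\le Q}$, while $-\rho_{\le Q} u_{>Q}^2 + \rho_{\le Q} u_{>Q}^2 = 0$ and the remaining five terms assemble into $r_Q(\rho,u,u) + 2 C_{\rho u} u_{>Q} + \rho_{>Q} u_{>Q}^2 + \rho C_{uu} - \rho_{\le Q}^{-1} C_{\rho u}^2$, which is the claimed decomposition.

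The only conceptually delicate point is Step~1, where one must verify that the Littlewood--Paley cutoff admits a Schwartz kernel representative satisfying $\int \widetilde h_Q = 1$; this is a standard construction (see the treatments in \cite{CCFS} and \cite{LS2016}). Once this is in hand, the remainder is routine bookkeeping of polynomial identities in $\rho_{\le Q}, \rho_{>Q}, u_{\le Q}, u_{>Q}, C_{\rho u}, C_{uu}$, and the main obstacle is simply avoiding arithmetic errors as the six CET monomials are combined with the six terms generated by $U(\rho u)_{\le Q}$.
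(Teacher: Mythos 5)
Your proof is correct: I checked the expansion of the six CET monomials against the six terms generated by $U(\rho u)_{\le Q}$, and all cancellations work out exactly as you claim, yielding the stated decomposition (with $\rho_{>Q}u_{>Q}\otimes u_{>Q}$ read as $\rho_{>Q}u_{>Q}^2$ in one dimension). The paper omits the proof of this lemma, implicitly deferring to \cite{LS2016}, and your argument is precisely the intended one: the Constantin--E--Titi triple-commutator identity for the low-pass filter (whose kernel is Schwartz with unit integral because the $q=-1$ block retains the zero mode), followed by routine bookkeeping in $u_{\le Q}$, $u_{>Q}$, $\rho_{\le Q}$, $\rho_{>Q}$ and the two commutators.
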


With these facts in hand, we are now in a position to prove that the nonlinear term vanishes under our hypotheses.  

\begin{PROP}
The quantity $F_Q(\rho, u)$ satisfies the bound 
\begin{equation}
\|F_Q(\rho, u)\|_{L^{3/2}} \lesssim \l_Q^{-2/3}(D^{1/3}_{3,Q}(u))^2
\end{equation}
whenever $u\in B^{1/3}_{3,\infty}$ and $\rho\in L^\infty$.
\end{PROP}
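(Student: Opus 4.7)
The natural plan is to take the decomposition \eqref{e:FQdecomp} and estimate each of its five summands in $L^{3/2}$ using Proposition \ref{p:biest}. Throughout, the implicit constants may depend on $\|\rho\|_\infty$ and on $\|\rho_{\le Q}^{-1}\|_\infty$, which is controlled by $\|\rho^{-1}\|_\infty$; this is finite in the weak-solution setting where $F_Q$ is employed.

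For the Schwartz-smoothed remainder $r_Q(\rho, u, u)$, I would apply Minkowski's inequality to pull $\widetilde{h}_Q$ outside the $L^{3/2}$ norm, bound $|\rho(x-y) - \rho(x)| \le 2\|\rho\|_\infty$ pointwise, and use the shift estimate from Proposition \ref{p:biest} with $s=1/3$, $a=3$ applied to the $u$-difference squared. This gives $\|r_Q(\rho,u,u)\|_{3/2} \lesssim \|\rho\|_\infty \l_Q^{-2/3}(D^{1/3}_{3,Q}(u))^2 \int |\widetilde{h}_Q(y)|(\l_Q|y|+1)^2\,dy$, and the last integral is $O(1)$ uniformly in $Q$ since $\widetilde{h}_Q$ is a Schwartz function at scale $\l_Q^{-1}$.

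For the ``quadratic'' commutator $\rho[(u^2)_{\le Q} - u_{\le Q}^2]$ I would apply the bilinear commutator estimate \eqref{e:comm} directly with $f=h=u$, $s=t=1/3$, $a=b=3$, $c=3/2$, producing exactly $\l_Q^{-2/3}(D^{1/3}_{3,Q}(u))^2$ after absorbing $\|\rho\|_\infty$. For the three remaining mixed terms --- namely $\rho_{\le Q}^{-1}[(\rho u)_{\le Q} - \rho_{\le Q} u_{\le Q}]^2$, $\rho_{>Q}u_{>Q}^2$, and $[(\rho u)_{\le Q} - \rho_{\le Q} u_{\le Q}]\,u_{>Q}$ --- the strategy is uniform: split the $L^{3/2}$ norm via H\"older into two $L^3$ factors, then estimate each factor by either the endpoint commutator bound \eqref{e:endpt} (applied with $g=\rho$, giving $\l_Q^{-1/3}D^{1/3}_{3,Q}(u)\|\rho\|_\infty$ and using only $\rho \in L^\infty$) or the high-frequency bound \eqref{e:highs} applied to $u$ (giving $\|u_{>Q}\|_3 \lesssim \l_Q^{-1/3}D^{1/3}_{3,Q}(u)$). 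For $\rho_{>Q}u_{>Q}^2$ one also needs $\|\rho_{>Q}\|_\infty \lesssim \|\rho\|_\infty$, which follows from $L^\infty$-boundedness of the low-pass projection. In each case the two $L^3$ factors multiply to give the target scale $\l_Q^{-2/3}(D^{1/3}_{3,Q}(u))^2$.

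None of these five estimates is a genuine obstacle; the proof is essentially a careful bookkeeping exercise. The key structural observation --- and the reason why no Besov seminorm of $\rho$ ever appears on the right-hand side --- is that the decomposition \eqref{e:FQdecomp} confines all of the H\"older-sensitivity to differences of $u$: every difference of $\rho$ that arises is either absent (as in \eqref{e:endpt}, where $\rho$ enters only through $\|\rho\|_\infty$) or harmlessly dominated by $2\|\rho\|_\infty$ against the Schwartz kernel $\widetilde{h}_Q$.
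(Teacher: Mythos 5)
Your proposal is correct and follows exactly the route the paper intends: the paper's own ``proof'' is the single sentence that the bound follows from the decomposition \eqref{e:FQdecomp} and Proposition \ref{p:biest}, and your term-by-term estimates (Constantin--E--Titi treatment of $r_Q$, the bilinear commutator \eqref{e:comm} for $\rho[(u^2)_{\le Q}-u_{\le Q}^2]$, and H\"older with \eqref{e:endpt}/\eqref{e:highs} for the remaining three terms) are precisely the omitted bookkeeping. Your observation that the implied constant also depends on $\|\rho_{\le Q}^{-1}\|_{L^\infty}$ is accurate --- the quantity $F_Q$ itself is only defined via $U=\rho_{\le Q}^{-1}(\rho u)_{\le Q}$, so this dependence is already implicit in the paper's statement.
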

This bound is a consequence of the decomposition \eqref{e:FQdecomp} and the bounds of Proposition \ref{p:biest}.

\begin{THEOREM}
\label{t:noflux}
Suppose $u\in L^3 B^{1/3}_{3,c_0}$ and $\rho\in L^3 B^{1/3}_{3,\infty}$.  Then $\int_0^t \Pi_Q(s)\ds\to 0$ as $Q\to \infty$.
\end{THEOREM}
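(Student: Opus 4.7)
The plan is to apply H\"older's inequality with exponents $3/2$ and $3$ to $\Pi_Q=\int F_Q(\rho,u)U'\,dx$, combining the stated bound $\|F_Q(\rho,u)\|_{L^{3/2}}\lesssim \lambda_Q^{-2/3}(D^{1/3}_{3,Q}(u))^2$ with a matching $L^3$-bound on $U'$ that carries a factor $\lambda_Q^{2/3}$. The $\lambda_Q$-powers will then cancel, leaving a factor $(D^{1/3}_{3,Q}(u))^2$ times something uniformly bounded, and the $c_0$-hypothesis on $u$ will force the remainder to vanish pointwise in time. A dominated convergence argument will upgrade this to convergence of the time integral.

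For the bound on $U'$, I would differentiate $U=\rho_{\le Q}^{-1}(\rho u)_{\le Q}$ to obtain
\[
U'=\rho_{\le Q}^{-1}(\rho u)_{\le Q}'-\rho_{\le Q}^{-2}(\rho u)_{\le Q}\,\rho_{\le Q}',
\]
and then invoke \eqref{e:gradflow} with $s=1/3$, $a=3$ for both $\rho u$ and $\rho$. Since $L^\infty\cap B^{1/3}_{3,\infty}$ is an algebra, $\rho u(s)\in B^{1/3}_{3,\infty}$ for a.e.\ $s$, so that $\|(\rho u)_{\le Q}'\|_{L^3}\lesssim \lambda_Q^{2/3}D^{1/3}_{3,Q}(\rho u)$ and $\|\rho_{\le Q}'\|_{L^3}\lesssim \lambda_Q^{2/3}D^{1/3}_{3,Q}(\rho)$. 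Assuming uniform $L^\infty$-control of $\rho_{\le Q}^{-1}$ and $U$ (the technical point discussed below), this gives
\[
\|U'(s)\|_{L^3}\lesssim \lambda_Q^{2/3}\bigl[D^{1/3}_{3,Q}(\rho u(s))+D^{1/3}_{3,Q}(\rho(s))\bigr],
\]
and hence
\[
|\Pi_Q(s)|\lesssim \bigl(D^{1/3}_{3,Q}(u(s))\bigr)^2\bigl[D^{1/3}_{3,Q}(\rho u(s))+D^{1/3}_{3,Q}(\rho(s))\bigr].
\]

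For a.e.\ fixed $s$, the assumption $u(s)\in B^{1/3}_{3,c_0}$ combined with \eqref{eq:Ddsim} forces $D^{1/3}_{3,Q}(u(s))\to 0$ as $Q\to\infty$, while the remaining factors stay bounded uniformly in $Q$ by $\|\rho u(s)\|_{B^{1/3}_{3,\infty}}$ and $\|\rho(s)\|_{B^{1/3}_{3,\infty}}$ (the kernel $K^{1/3}_{Q-q}$ is summable). Thus $\Pi_Q(s)\to 0$ pointwise in $s$. The same uniform bound yields the majorant
\[
|\Pi_Q(s)|\lesssim \|u(s)\|_{B^{1/3}_{3,\infty}}^2\bigl(\|\rho u(s)\|_{B^{1/3}_{3,\infty}}+\|\rho(s)\|_{B^{1/3}_{3,\infty}}\bigr),
\]
which is integrable on $[0,t]$ by H\"older with exponents $(3,3,3)$ and the hypotheses $u\in L^3 B^{1/3}_{3,c_0}$, $\rho\in L^3 B^{1/3}_{3,\infty}$ (so also $\rho u\in L^3 B^{1/3}_{3,\infty}$). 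Dominated convergence then delivers $\int_0^t\Pi_Q(s)\,ds\to 0$.

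The main obstacle is the uniform $L^\infty$-bound on $\rho_{\le Q}^{-1}$ needed for the $U'$ estimate. Littlewood--Paley projections do not preserve positivity, so the pointwise lower bound $\rho\ge c>0$ does not immediately pass to $\rho_{\le Q}$, and neither $\rho\in L^\infty$ nor $\rho\in B^{1/3}_{3,\infty}$ is enough to force $\|\rho_{>Q}\|_{L^\infty}\to 0$. I would handle this exactly as in \cite{LS2016}, replacing the strict Littlewood--Paley cutoff entering the energy budget by a positivity-preserving mollifier at scale $\lambda_Q^{-1}$; all Besov estimates above survive the replacement, and the inverse $\rho_{\le Q}^{-1}$ becomes bounded for free from the bound on $\rho^{-1}$.
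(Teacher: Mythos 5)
Your proposal is correct and follows essentially the same route as the paper: the identity $U'=\rho_{\le Q}^{-1}[(\rho u)_{\le Q}'-U\rho_{\le Q}']$, the estimate \eqref{e:gradflow} with $s=1/3$, $a=3$ applied to $\rho u$ and $\rho$ (via the algebra property of $L^\infty\cap B^{1/3}_{3,\infty}$), H\"older against the $L^{3/2}$ bound on $F_Q$, and dominated convergence using \eqref{eq:Ddsim} and the $c_0$ hypothesis. Your added discussion of the positivity of $\rho_{\le Q}$ addresses a point the paper leaves implicit in its definition of $U$ (deferring to the setup of \cite{LS2016}), and is a reasonable way to make that step rigorous.
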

\begin{proof}
First we write
\[
U' = \frac{1}{\rho_{\le Q}}[(\rho u)'_{\le Q} - U \rho'_{\le Q}].
\]	
Since $L^\infty\cap B^{1/3}_{3,\infty}$ is an algebra, we have $\rho u\in L^3 B^{1/3}_{3,\infty}$.  Therefore
\[
\|U'\|_{L^3} \lesssim \l_Q^{2/3}[D^{1/3}_{3,Q}(\rho u) + D^{1/3}_{3,Q}(\rho)],
\]
by \eqref{e:gradflow}. So 
\[
\int_0^t F_Q(\rho, u)U'\ds \lesssim \int_0^t (D^{1/3}_{3,Q}(u))^2 [D^{1/3}_{3,Q}(\rho u) + D^{1/3}_{3,Q}(\rho)]\ds.
\]
By \eqref{eq:Ddsim}, the definition of $B^{1/3}_{3,c_0}$, and the dominated convergence theorem, we conclude that the integral tends to zero, as needed.
\end{proof}

\subsection{Unconditional Convergence of the Dissipation Term}
\label{ss:dissconv}
In this subsection, we prove the following:
\begin{THEOREM}
\label{t:endiss}
Any weak solution $(\rho, u)$ satisfies $\varepsilon_Q\to \varepsilon$, as $Q\to \infty$.  	
\end{THEOREM}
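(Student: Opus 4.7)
The plan is to rewrite $\varepsilon_Q(t)$ as a non-local expression that is \emph{linear} in the auxiliary function
\[
\psi_Q := (U)_{\le Q} = \bigl(\rho_{\le Q}^{-1}(\rho u)_{\le Q}\bigr)_{\le Q},
\]
with the same expression reducing to $\varepsilon(t)$ upon replacing $\psi_Q$ by $u$. In this form the limit $\varepsilon_Q(t)\to\varepsilon(t)$ is equivalent to weak convergence $\psi_Q\rightharpoonup u$ in an appropriate Sobolev space, and the analysis reduces to a linear-functional argument plus one uniform bound.

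First, polarizing the identity
$\int_\T[\rho u\L_\a(\rho u)-\rho u^2\L_\a\rho]\dx = \tfrac12\iint\rho(x)\rho(y)|u(x)-u(y)|^2\phi_\a(y-x)\dx\dy$
from Section \ref{ss:EE} gives, for all $\psi\in X$,
\[
\langle\rho\cT(\rho,u),\,\psi\rangle_{X^*,X} = -\tfrac12\iint\rho(x)\rho(y)\bigl(u(x)-u(y)\bigr)\bigl(\psi(x)-\psi(y)\bigr)\phi_\a(y-x)\dx\dy.
\]
Using self-adjointness of the Littlewood--Paley projection on $L^2$, we have $\int(\rho\cT(\rho,u))_{\le Q}U\dx = \langle\rho\cT(\rho,u),\psi_Q\rangle$, so
\[
\varepsilon_Q(t) = \tfrac12\int_0^t\iint\rho(x)\rho(y)\bigl(u(x)-u(y)\bigr)\bigl(\psi_Q(x)-\psi_Q(y)\bigr)\phi_\a(y-x)\dx\dy\ds,
\]
which has exactly the form of $\varepsilon(t)$ with $\psi_Q$ in place of $u$.

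Second, for $(u,\rho)$ fixed in $L^\infty\cap L^2_t H^{\a/2}$, Cauchy--Schwarz in $(x,y)$ combined with the standard non-local characterization of $H^{\a/2}$ shows that
\[
\psi\mapsto\int_0^t\iint\rho(x)\rho(y)(u(x)-u(y))(\psi(x)-\psi(y))\phi_\a(y-x)\dx\dy\ds
\]
defines a bounded linear functional on $L^2(0,t;H^{\a/2})$ of norm at most $C\|\rho\|_{L^\infty}^2\|u\|_{L^2_t H^{\a/2}}$. It therefore suffices to prove the weak convergence $\psi_Q\rightharpoonup u$ in $L^2(0,t;H^{\a/2})$. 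For this I would combine pointwise a.e.\ convergence $\psi_Q\to u$ --- which follows from a.e.\ convergence of Littlewood--Paley truncations of the bounded functions $\rho$ and $\rho u$, the a.e.\ lower bound $\rho\ge c>0$, and stability of a.e.\ limits under the outer projection --- with a uniform-in-$Q$ bound $\|\psi_Q\|_{L^2_t H^{\a/2}}\le C$.

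The main obstacle is this last uniform bound. Controlling $\psi_Q$ in $H^{\a/2}$ requires estimating $\rho_{\le Q}^{-1}$ in $L^\infty\cap H^{\a/2}$ uniformly in $Q$, which in turn forces a uniform lower bound $\rho_{\le Q}\ge c/2$. Such a bound is not automatic for a generic a.e.-positive $L^\infty$ function; however, one can exploit the extra regularity $\rho\in L^2_t H^{\a/2}$. For $\a>1$, the Sobolev embedding $H^{\a/2}(\T)\subset C^0(\T)$ yields uniform convergence $\rho_{\le Q}\to\rho$ on a.e.\ time slice, delivering the required lower bound directly. For $\a\le 1$, a more delicate argument based on a Bony paraproduct decomposition of $\rho_{\le Q}^{-1}(\rho u)_{\le Q}$ --- combined with the algebra structure of $L^\infty\cap H^{\a/2}$ and strong $L^p$ convergence of $\rho_{\le Q}$ together with the pointwise lower bound on $\rho$ itself --- is likely required to bypass a direct $H^{\a/2}$ estimate on $\rho_{\le Q}^{-1}$ and complete the argument.
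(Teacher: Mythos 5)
Your proposal is correct in outline but takes a genuinely different route from the paper. The paper proves the convergence by brute force: it inserts the intermediate quantity $\int_0^t\int \rho_{\le Q}u_{\le Q}\,\cT(\rho_{\le Q},u_{\le Q})\dx\ds$, splits the remaining difference into pieces $A_1,A_2,B_1,B_2$, and—because $\rho\L_\a(\rho u)$ and $\rho u\L_\a\rho$ live only in $L^2H^{-\a/2}$ so that the standard commutator estimate fails—develops a bespoke Littlewood--Paley decomposition of $(fg)_{\le Q}-f_{\le Q}g_{\le Q}$ and estimates each resulting term. Your polarization identity
\[
\langle\rho\cT(\rho,u),\psi\rangle_{X^*,X}=-\tfrac12\iint\rho(x)\rho(y)\bigl(u(x)-u(y)\bigr)\bigl(\psi(x)-\psi(y)\bigr)\phi_\a(x-y)\dx\dy
\]
is correct (expand both terms via the symmetrization $\int g\L_\a h=\tfrac12\iint(g(x)-g(y))(h(x)-h(y))\phi_\a$ and observe the pointwise cancellation of everything except $\rho(x)\rho(y)(u(x)-u(y))(\psi(x)-\psi(y))$), and it is legitimate to apply it to $\psi=\psi_Q$, which is band-limited and hence smooth. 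Together with self-adjointness of the projection this turns $\varepsilon_Q$ into a fixed bounded linear functional on $L^2(0,t;H^{\a/2})$ evaluated at $\psi_Q$, so the whole theorem reduces to $\psi_Q\rightharpoonup u$, which you correctly split into a uniform bound plus identification of the limit in $L^2_{t,x}$. This bypasses all of the paper's commutator analysis and is considerably shorter; what the paper's approach buys in exchange is that it never needs any Sobolev control of $\rho_{\le Q}^{-1}$, only $L^\infty$ control.

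The one soft spot is the uniform bound $\sup_Q\|\psi_Q\|_{L^2_tH^{\a/2}}<\infty$, which you flag but leave open for $\a\le 1$ (``a Bony paraproduct decomposition \dots is likely required'' is not a proof). In fact no paraproduct is needed. The lower bound $\rho_{\le Q}\ge c>0$ uniformly in $Q$ is exactly the hypothesis already built into the paper's framework—the definitions of $U$ and $\varepsilon_Q$, and the estimates of $B_2$ and $\mathrm{II}$, all presuppose $\|\rho_{\le Q}^{-1}\|_{L^\infty}\le C$—so you are entitled to the same assumption. Granting it, the $H^{\a/2}$ bound is immediate from the Gagliardo characterization: since
\[
\bigl|\rho_{\le Q}^{-1}(x)-\rho_{\le Q}^{-1}(y)\bigr|\le\|\rho_{\le Q}^{-1}\|_{L^\infty}^2\,\bigl|\rho_{\le Q}(x)-\rho_{\le Q}(y)\bigr|,
\]
one gets $\|\rho_{\le Q}^{-1}\|_{\dot H^{\a/2}}\le\|\rho_{\le Q}^{-1}\|_{L^\infty}^2\|\rho\|_{\dot H^{\a/2}}$, and the algebra property of $L^\infty\cap H^{\a/2}$ finishes the bound on $\psi_Q$ for every $\a\in(0,2)$. (Your Sobolev-embedding argument for $\a>1$ also has a wrinkle you should be aware of: uniform convergence $\rho_{\le Q}\to\rho$ gives the lower bound only for $Q\ge Q_0(t)$ with $Q_0$ depending on the time slice, which does not directly yield a bound uniform over all $Q$; the route above avoids this.) With that step closed, your argument is complete and constitutes a valid, more economical alternative proof.
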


Since the dissipation term involves fractional derivatives, we introduce a modified version of the localization kernel that we recalled in the previous section.  Define
\[
\widetilde{K}_q = \left\{ \begin{array}{lcl}
\l_q^{-\a/2}, 	 & & q\ge 0; \\
\l_q^{\a/2},		 & & q<0;
\end{array}\right.
\hspace{5 mm}
\widetilde{d}_q (f) = \l_q^{\a/2} \|f_q \|_{L^2};
\hspace{5 mm}
\widetilde{D}_Q(f) = \sum_{q=-1}^\infty \widetilde{K}_{Q-q} \widetilde{d}_{q}(f).
\]
Note that
\begin{equation}
\label{eq:Ddsima}
\limsup_{Q\ra \infty} \widetilde{D}_{Q}(f) \sim \limsup_{q\to \infty} \widetilde{d}_{q}(f),
\end{equation}
where the similarity constant depends only on $\a$. 

\begin{PROP}\label{p:biesta}
For $f\in B^{\a/2}_{2,\infty}$, $0<\a<2$, we have the following estimates:
\begin{align}
\label{e:Laflow}
\| \L_\a f_{\le Q} \|_2 
& \lesssim \l_Q^{\a/2} \widetilde{D}_Q(f),\\
\label{e:highalph}
\|f_{>Q}\|_2 & \le \l_Q^{-\a/2} \widetilde{D}_Q(f).
\end{align}
\end{PROP}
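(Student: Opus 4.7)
The plan is to prove each of the two estimates directly from the definitions, using the fact that $\L_\a$ behaves as a Fourier multiplier of order $\a$ on frequency-localized functions together with a routine comparison of the resulting sums to the weighted quantity $\widetilde{D}_Q(f)$. This is essentially the analogue for fractional derivatives of the bounds \eqref{e:gradflow} and \eqref{e:highs} in Proposition~\ref{p:biest}, which presumably were proved by the same method.

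First I would unpack the definition of $\widetilde D_Q(f)$ using $\l_{Q-q}=\l_Q/\l_q$, obtaining the identity
\begin{equation*}
\widetilde D_Q(f)=\l_Q^{-\a/2}\sum_{q\le Q}\l_q^{\a}\|f_q\|_{L^2}+\l_Q^{\a/2}\sum_{q>Q}\|f_q\|_{L^2}.
\end{equation*}
Multiplying this identity by $\l_Q^{\a/2}$ or $\l_Q^{-\a/2}$ puts the two desired bounds into a form that can be verified termwise.

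For \eqref{e:Laflow}, I would invoke the triangle inequality together with the standard frequency-localized multiplier bound $\|\L_\a f_q\|_{L^2}\lesssim \l_q^{\a}\|f_q\|_{L^2}$, which follows from Plancherel and the support properties of the Littlewood--Paley cutoffs. This yields
\begin{equation*}
\|\L_\a f_{\le Q}\|_{L^2}\le \sum_{q\le Q}\|\L_\a f_q\|_{L^2}\lesssim \sum_{q\le Q}\l_q^{\a}\|f_q\|_{L^2}\le \l_Q^{\a/2}\widetilde D_Q(f),
\end{equation*}
where the last inequality follows from the unpacking above (the first of the two summands). For \eqref{e:highalph}, I would simply use the triangle inequality
\begin{equation*}
\|f_{>Q}\|_{L^2}\le \sum_{q>Q}\|f_q\|_{L^2}\le \l_Q^{-\a/2}\widetilde D_Q(f),
\end{equation*}
which is immediate from the second summand in the unpacking of $\widetilde D_Q(f)$.

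There is no real obstacle here: the estimates are clean consequences of the frequency-localized symbol calculus and the explicit form of the weight $\widetilde K$. The only subtlety worth flagging is that one could instead try to use almost-orthogonality and Cauchy--Schwarz to produce $\ell^2$-type sums in $q$, which would in principle give sharper constants; however, the $\ell^1$ triangle inequality is already enough to land in $\widetilde D_Q(f)$, so no delicate summation is required.
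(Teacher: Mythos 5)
Your proof is correct, and it is exactly the standard argument the paper has in mind: the paper omits the proof, noting only that it is ``extremely similar'' to that of \eqref{e:gradflow} and \eqref{e:highs} from \cite{LS2016}, which proceed by the same unpacking of the weighted sum and the frequency-localized multiplier bound. No gaps.
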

The proofs of \eqref{e:Laflow} and \eqref{e:highalph} are extremely similar to those of \eqref{e:gradflow} and \eqref{e:highs}, respectively, and are omitted.

\begin{REMARK}
We will also repeatedly use the following basic facts without comment below:
\begin{enumerate}
\item If $\supp \widehat{f}\subset B_{\l_Q}(0)$, then  $\|\L_\a f\|_{L^2}\lesssim \l_Q^\a\|f\|_{L^2}$.  
\item For $f\in B^{\a/2}_{2,\infty}$, the inequalities in \eqref{e:Laflow} and \eqref{e:highalph} continue to hold when $f_{\le Q}$ and $f_{>Q}$ are replaced with $f_Q$.  That is, for such $f$, we have 
\[
\| \L_\a f_{Q} \|_2 
 \lesssim \l_Q^{\a/2} \widetilde{D}_Q(f),
\quad \quad 
\|f_{Q}\|_2 \le \l_Q^{-\a/2} \widetilde{D}_Q(f).
\]
\item For $k\in \Z$, we have 
\[
\widetilde{D}_Q(f)\sim \widetilde{D}_{Q+k}(f),
\]
with the similarity constant depending only on $k$.  (To see this, simply note that $\widetilde{K}_{q+k}\sim \widetilde{K}_q$ for each $q\in \Z$, with a similarity constant that depends on $k$ but not on $q$.) 
\end{enumerate}
Of course, analogous properties hold when we consider first derivatives instead of fractional derivatives, but the fractional case is the one which is relevant below.
\end{REMARK}

To prove Theorem \ref{t:endiss}, we write 
\[
|\varepsilon_Q(t) - \varepsilon(t)|
\le \left| \varepsilon_Q(t) + \int_0^t \int \rho_{\le Q} u_{\le Q} \cT(\rho_{\le Q}, u_{\le Q})\dx\ds \right| + \left| \int_0^t \int \rho_{\le Q} u_{\le Q} \cT(\rho_{\le Q}, u_{\le Q})\dx\ds + \varepsilon(t)\right|,
\]
and we show that both terms tend to zero as $Q\to \infty$.  Let us take care of the (much easier) second term presently.  We write
\begin{align*}
& \int_0^t \int_\R \int_\T \rho_{\le Q}(x) \rho_{\le Q}(y) \frac{|u_{\le Q}(x) - u_{\le Q}(y)|^2}{|x-y|^{1+\a}}\dx\dy\ds
- \int_0^t \int_\R \int_\T \rho(x) \rho(y) \frac{|u(x) - u(y)|^2}{|x-y|^{1+\a} }\dx\dy\ds \\
& = \int_0^t \int_\R \int_\T [\rho_{\le Q}(x) \rho_{\le Q}(y) - \rho(x)\rho(y)] \frac{|u(x) - u(y)|^2}{|x-y|^{1+\a}}\dx\dy \ds \\
& \hspace{5 mm} + \int_0^t \int_\R \int_\T \rho_{\le Q}(x) \rho_{\le Q}(y) \frac{[(u_{\le Q}-u)(x) - (u_{\le Q}-u)(y)][(u_{\le Q}+u)(x) - (u_{\le Q}+u)(y)]}{|x-y|^{1+\a} }\dx\dy \ds.
\end{align*}
The first term here tends to zero by the dominated convergence theorem (the dominating function being $C\|\rho\|_{L^\infty}^2 \frac{|u(x) - u(y)|^2}{|x-y|^{1+\a}}$), while the second term is bounded above by 
\[
\int_0^t \|\rho\|_{L^\infty}^2 \|u_{\le Q} - u\|_{H^{\a/2}} \|u_{\le Q} + u\|_{H^{\a/2}}\ds \to 0,
\]
which tends to zero as $Q\to \infty$.  It thus remains to show that 
\[
\left| \varepsilon_Q(t) + \int_0^t \int \rho_{\le Q} u_{\le Q} \cT(\rho_{\le Q}, u_{\le Q})\dx\ds\right| \to 0, \quad \text{ as } Q\to \infty. 
\]
We write the relevant difference as
\begin{align*}
\int \rho \cT(\rho, u)U_{\le Q} - \rho_{\le Q} u_{\le Q} \cT(\rho_{\le Q}, u_{\le Q})\dx
& = \int \rho_{\le Q}u_{\le Q}\L_\a(\rho_{\le Q} u_{\le Q}) -(\rho\L_\a(\rho u))_{\le Q}U \dx \\
& \quad + \int (\rho u \L_\a \rho)_{\le Q}U - \rho_{\le Q} u_{\le Q}^2 \L_\a \rho_{\le Q}\dx\\
& =: A + B.
\end{align*}
Expanding further gives
\begin{align*}
A & = \int [\rho_{\le Q} u_{\le Q}-(\rho u)_{\le Q}]\L_\a(\rho_{\le Q} u_{\le Q}+(\rho u)_{\le Q})\dx + \int [\rho_{\le Q} \L_\a(\rho u)_{\le Q} - (\rho \L_\a(\rho u))_{\le Q}]U \dx \\
& =:A_1 + A_2.
\end{align*}
\begin{align*}
B & = \int [(\rho u\L_\a \rho)_{\le Q} - (\rho u)_{\le Q} \L_\a \rho_{\le Q}] U\dx + \int \rho_{\le Q}^{-1}[(\rho u)_{\le Q} - \rho_{\le Q} u_{\le Q}][(\rho u)_{\le Q} + \rho_{\le Q} u_{\le Q}]\L_\a \rho_{\le Q}\dx \\
& =:B_1 + B_2.
\end{align*}
The terms $A_1$ and $B_2$ are easy to treat:
\begin{align*}
A_1 & \lesssim \| \rho_{\le Q} u_{\le Q} - (\rho u)_{\le Q}\|_{L^2} \cdot \|\L_\a(\rho_{\le Q}u_{\le Q} + (\rho u)_{\le Q})\|_{L^2} \\
& \le \l_Q^{-\a} D_{2,Q}^{\a/2}(\rho) D_{2,Q}^{\a/2}(u)\cdot \l_Q^{\a}\|\rho_{\le Q}u_{\le Q} + (\rho u)_{\le Q}\|_{L^2} \\ 
& \lesssim D_{2,Q}^{\a/2}(\rho) D_{2,Q}^{\a/2}(u);\\
B_2 & \le \|\rho_{\le Q}^{-1}\|_{L^\infty} \|(\rho u)_{\le Q} - \rho_{\le Q} u_{\le Q}\|_{L^2} \|(\rho u)_{\le Q} + \rho_{\le Q} u_{\le Q}\|_{L^\infty} \|\L_\a \rho_{\le Q}\|_{L^2} \\
& \le C\cdot \l_Q^{-\a}  D_{2,Q}^{\a/2}(\rho) D_{2,Q}^{\a/2}(u) \cdot C \cdot \l_Q^\a \|\rho_{\le Q}\|_{L^2} \\
& \lesssim D_{2,Q}^{\a/2}(\rho) D_{2,Q}^{\a/2}(u).
\end{align*}
Thus 
\[
A_1 + B_2 \lesssim D_{2,Q}^{\a/2}(\rho)D_{2,Q}^{\a/2}(u),
\]
where the implied constant is independent of $Q$ (but may depend on the $L^\infty$ norms of $\rho$ and $u$).  

To deal with $A_2$ and $B_1$, we need to work more: Since $\rho\L_\a(\rho u)$ and $\rho u\L_\a \rho$ are in general only $L^2 H^{-\a/2}$, the commutator estimate \eqref{e:comm} in Proposition \ref{p:biest} does not directly apply.  To overcome this difficulty, we decompose the commutator $(f\L_\a g)_{\le Q}-f_{\le Q} \L_\a g_{\le Q}$ in such a way that repeated use of \eqref{e:Laflow} and \eqref{e:highalph} (and related inequalities) becomes an adequate substitute for \eqref{e:comm} in the treatment of $A_2$ and $B_1$.  Actually, we state our decomposition for the more general commutator $(fg)_{\le Q} - f_{\le Q} g_{\le Q}$, with the idea that $g$ will be replaced by $\L_\a g$ below.

We set the notation 
\[
f_{q_+} = f_{q+1} + f_{q+2},
\quad \quad 
f_{r_-} = f_{r-2} + f_{r-1} 
\quad \quad 
(q\ge -1,\; r\ge 1).
\]
\begin{LEMMA}
	The following decomposition holds:
	\begin{align*}
	(fg)_{\le Q} - f_{\le Q} g_{\le Q}
	& = \sum_{q>Q+2} [ f_q g_{q_-} + f_{q_-} g_q  + f_q g_q]_{\le Q} + [f_{Q_+} g_{\le Q} + f_{\le Q+2} g_{Q_+}]_{\le Q} \\
	& \hspace{5 mm} - [f_{(Q-2)_+} g_{\le Q-2} + f_{\le Q} g_{(Q-2)_+} ]_{Q+1} - [f_{Q} g_{\le Q-1} + f_{\le Q} g_{Q}]_{Q+2}.
	\end{align*}	
\end{LEMMA}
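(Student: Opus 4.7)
The plan is to establish the identity by direct Littlewood--Paley bookkeeping, exploiting the standard Fourier-support rules for dyadic products. First I would rewrite the left-hand side as
\begin{equation*}
(fg)_{\le Q} - f_{\le Q} g_{\le Q} = \bigl(f_{>Q}\, g_{\le Q} + f_{\le Q}\, g_{>Q} + f_{>Q}\, g_{>Q}\bigr)_{\le Q} - (f_{\le Q} g_{\le Q})_{Q+1} - (f_{\le Q} g_{\le Q})_{Q+2},
\end{equation*}
using that $f_{\le Q} g_{\le Q}$ has Fourier support in the ball $\{|\xi| \lesssim 2^{Q+2}\}$, so its high-frequency part beyond level $Q$ lives only in levels $Q+1$ and $Q+2$. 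I would then re-expand each bilinear term into dyadic blocks $f_p g_q$ and determine which of them survive each projection.

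The crucial Fourier-support rules are that $f_p g_q$ is supported in an annulus of scale $2^{\max(p,q)}$ when $|p-q| \ge 3$, and in a ball of radius $\sim 2^{\max(p,q)+2}$ when $|p-q| \le 2$. In $(f_{>Q} g_{\le Q})_{\le Q}$, for example, the separated pairs ($|p-q| \ge 3$, $p > Q$) survive the low-pass projection only for $p \in \{Q+1, Q+2\}$, contributing part of $[f_{Q_+} g_{\le Q}]_{\le Q}$; the near-diagonal pairs ($|p-q| \le 2$, $p > Q$) contribute to the diagonal sum $\sum_{q > Q+2}[\cdots]_{\le Q}$ whenever $\max(p,q)$ is large enough for the projection to be nonzero. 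A symmetric analysis handles $(f_{\le Q} g_{>Q})_{\le Q}$, and $(f_{>Q} g_{>Q})_{\le Q}$ folds into the same diagonal sum.

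For the two subtracted terms, I would run the analogous analysis in reverse: $(f_{\le Q} g_{\le Q})_{Q+1}$ receives contributions only from pairs $p, q \le Q$ with $|p-q| \le 2$ and $\max(p,q) \ge Q-1$, yielding $[f_{(Q-2)_+} g_{\le Q-2} + f_{\le Q} g_{(Q-2)_+}]_{Q+1}$ after consolidating double-counted diagonal blocks; and $(f_{\le Q} g_{\le Q})_{Q+2}$ picks up only $[f_Q g_{\le Q-1} + f_{\le Q} g_Q]_{Q+2}$, since reaching level $Q+2$ with both factors truncated at $Q$ forces at least one factor to actually be $f_Q$ or $g_Q$.

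The main obstacle is purely combinatorial accounting: every surviving dyadic product $f_p g_q$ must be assigned to exactly one of the four groups on the right-hand side, with no overlap and nothing missed. The specific cutoffs $q > Q+2$, $Q_+$, $(Q-2)_+$, $\le Q-2$, $\le Q-1$ are dictated by the precise indices at which annular supports of $f_p g_q$ cross the thresholds $2^{Q+1}$, $2^{Q+2}$, and $2^{Q+3}$ under the normalization of the Littlewood-Paley projectors used in the paper. Once this indexing is carefully verified in one place, the identity holds as a pointwise equality of tempered distributions, and no analytical machinery beyond the standard support rules is required.
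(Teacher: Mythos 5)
Your proposal is correct and follows essentially the same route as the paper's proof: both rest on the two dyadic support rules (a product $f_p g_r$ with $|p-r|\le 2$ is supported in a ball of radius $\sim 2^{\max\{p,r\}+2}$, while a separated product lives in an annulus at the larger scale) together with a careful enumeration of which blocks survive each projection; the paper merely organizes the splitting around $f_{\le Q+2}\,g_{\le Q+2}$ instead of $f_{\le Q}\,g_{\le Q}$. One small slip to correct when you do the bookkeeping: the near-diagonal pairs inside $(f_{>Q}\,g_{\le Q})_{\le Q}$ satisfy $p\le r+2\le Q+2$, so they are absorbed into $[f_{Q_+}\,g_{\le Q}]_{\le Q}$ rather than into the diagonal sum over $q>Q+2$, which arises entirely from $(f_{>Q}\,g_{>Q})_{\le Q}$.
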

\begin{proof}
	Notice that if $p$ or $r$ is greater than $Q+2$ and $|p-r|>2$, then the Fourier support of $f_p g_r$ lies outside the ball of radius $\l_{Q+1}$ centered at $0$.  In particular, $(f_p g_r)_{\le Q}$ vanishes. Therefore 
	\[
	(fg)_{\le Q} = (f_{\le Q+2}g_{\le Q+2})_{\le Q} + \sum_{\substack{\max\{p,r\}>Q+2 \\ |p-r|\le 2}} (f_p g_r)_{\le Q}.
	\]
	So 
\begin{align*}
(fg)_{\le Q} - f_{\le Q} g_{\le Q}
& = [(fg)_{\le Q} - (f_{\le Q+2} g_{\le Q+2})_{\le Q}] + [(f_{\le Q+2}g_{\le Q+2})_{\le Q} - f_{\le Q} g_{\le Q}] \\
& = \sum_{\substack{\max\{p,r\}>Q+2 \\ |p-r|\le 2}} (f_p g_r)_{\le Q} + [f_{\le Q+2}g_{\le Q+2} - f_{\le Q} g_{\le Q}]_{\le Q} - (f_{\le Q} g_{\le Q})_{>Q}.
\end{align*}
We have the somewhat more explicit representation for the sum:
\begin{equation}
\label{e:ppsum}
\sum_{\substack{\max\{p,r\}>Q+2 \\ |p-r|\le 2}} (f_p g_r)_{\le Q}
= \sum_{q>Q+2} [ f_q g_{q_-} + f_{q_-} g_q  + f_q g_q]_{\le Q}.
\end{equation}
Writing $f_{\le Q+2} = f_{\le Q} + f_{Q_+}$ (and similarly for $g_{\le Q+2})$, then expanding $f_{\le Q+2}g_{\le Q+2}$, we obtain
\begin{equation}
\label{e:ppI}
(f_{\le Q+2}g_{\le Q+2})_{\le Q} - (f_{\le Q} g_{\le Q})_{\le Q} = [f_{Q_+} g_{\le Q} + f_{\le Q+2} g_{Q_+}]_{\le Q}.
\end{equation}
Finally, we note that $(f_p g_r)_q = 0$ whenever $\max\{p+2,r+2\}<q$.  Therefore
\begin{equation}
\label{e:ppllh}
\begin{split}
(f_{\le Q} g_{\le Q})_{>Q} =
[f_{(Q-2)_+} g_{\le Q-2} + f_{\le Q} g_{(Q-2)_+}]_{Q+1}
+ [ f_{Q} g_{\le Q-1} + f_{\le Q} g_{Q}]_{Q+2},
\end{split}
\end{equation}
Summing up the right hand sides of \eqref{e:ppsum} and \eqref{e:ppI}, then subtracting the right hand side of \eqref{e:ppllh}, we thus obtain the desired decomposition.
\end{proof}

\begin{PROP}
	Let $(f,g)$ be either $(\rho, \rho u)$ or $(\rho u, \rho)$.  Then 
	\begin{align*}
	\left| \int [(f\L_\a g)_{\le Q} - f_{\le Q} \L_\a g_{\le Q}]U\dx \right|
	& \lesssim  \left[ \sum_{q > Q} \l_q^\a \|f_q \|_{L^2}^2 \right]^{\frac12}  \left[ \sum_{q > Q} \l_q^\a \|g_q \|_{L^2}^2 \right]^{\frac12} + D^{\a/2}_{2,Q}(\rho) D^{\a/2}_{2,Q}(u)\\ 
	& \hspace{5 mm} + [ \widetilde{D}_Q(fu) + \widetilde{D}_Q(f) + \widetilde{D}_Q(u)] \widetilde{D}_Q(g).
	\end{align*}
\end{PROP}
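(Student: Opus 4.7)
The plan is to expand $(f\L_\a g)_{\le Q}-f_{\le Q}\L_\a g_{\le Q}$ using the commutator decomposition of the preceding lemma with $g$ replaced by $\L_\a g$, and then pair each resulting piece against $U$. I will use repeatedly that Littlewood--Paley projections commute with $\L_\a$, so that $(\L_\a h)_q=\L_\a(h_q)$, together with the fact that $U$, $\rho$, and $u$ are uniformly bounded in $L^\infty$, and that $\L_\a$ is self-adjoint.

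I treat the ``high-high'' contribution $\sum_{q>Q+2}[f_q\L_\a g_{q_-}+f_{q_-}\L_\a g_q+f_q\L_\a g_q]_{\le Q}$ by pairing against $U$ in $L^1$--$L^\infty$ duality and invoking the block-wise bound $\|\L_\a g_r\|_{L^2}\lesssim \l_r^\a\|g_r\|_{L^2}$; Cauchy--Schwarz in the discrete variable $q$ then produces the first term of the claimed inequality.

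The boundary pieces $[f_{Q_+}\L_\a g_{\le Q}+f_{\le Q+2}\L_\a g_{Q_+}]_{\le Q}$ and the high-frequency subtractions at levels $Q+1,Q+2$ are more delicate. The first piece $[f_{Q_+}\L_\a g_{\le Q}]_{\le Q}$ can be handled directly: \eqref{e:highalph} gives $\|f_{Q_+}\|_{L^2}\lesssim \l_Q^{-\a/2}\widetilde{D}_Q(f)$, \eqref{e:Laflow} gives $\|\L_\a g_{\le Q}\|_{L^2}\lesssim \l_Q^{\a/2}\widetilde{D}_Q(g)$, and pairing with $U\in L^\infty$ yields the contribution $\widetilde{D}_Q(f)\widetilde{D}_Q(g)$. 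For the partner piece $[f_{\le Q+2}\L_\a g_{Q_+}]_{\le Q}$ a naive estimate would produce an unwanted $\l_Q^{\a/2}$, so I instead use the self-adjointness of $\L_\a$ (and of the low-pass projection) to recast the pairing as
\begin{equation*}
\int g_{Q_+}\,\L_\a\!\bigl(f_{\le Q+2}\,U\bigr)\dx,
\end{equation*}
then exploit the defining identity $\rho_{\le Q}U=(\rho u)_{\le Q}$ to rewrite $f_{\le Q+2}U$ as $(\rho u)_{\le Q}$ modulo a commutator remainder. Applying \eqref{e:Laflow} to $(\rho u)_{\le Q}$ combined with $\|g_{Q_+}\|_{L^2}\lesssim \l_Q^{-\a/2}\widetilde{D}_Q(g)$ gives the main contribution $\widetilde{D}_Q(fu)\widetilde{D}_Q(g)$, while the commutator remainder $(\rho u)_{\le Q}-\rho_{\le Q}u_{\le Q}$ produces $D^{\a/2}_{2,Q}(\rho)D^{\a/2}_{2,Q}(u)$ via \eqref{e:comm}; extracting instead a factor of $u_{\le Q}$ before applying \eqref{e:Laflow} yields $\widetilde{D}_Q(u)\widetilde{D}_Q(g)$.

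The main obstacle will be the systematic bookkeeping of these rearrangements. At each step one must decide whether to place the fractional Laplacian on the high-frequency factor, on the low-frequency cap, or (via self-adjointness) on $U$ itself, and the correct placement is forced by the shape of the target bound rather than by any symmetry. In addition, the two choices $(f,g)=(\rho,\rho u)$ and $(\rho u,\rho)$ must be checked separately, since the identity $\rho_{\le Q}U=(\rho u)_{\le Q}$ enters at different stages of the argument in each case.
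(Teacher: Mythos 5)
Your overall skeleton matches the paper's: substitute $\L_\a g$ into the decomposition lemma, pair against $U$, handle the high-high sum by block-wise Bernstein plus Cauchy--Schwarz (this gives the first term of the bound, exactly as in the paper), and handle $\int U_{\le Q} f_{Q_+}\L_\a g_{\le Q}\dx$ by \eqref{e:highalph} and \eqref{e:Laflow}. The problem is your treatment of the critical term $\int U_{\le Q} f_{\le Q+2}\L_\a g_{Q_+}\dx$, which is where the real content of the Proposition lies. Moving $\L_\a$ by self-adjointness onto $f_{\le Q+2}U$ does not work with the tools available: $U=\rho_{\le Q}^{-1}(\rho u)_{\le Q}$ contains the reciprocal $\rho_{\le Q}^{-1}$ and is therefore not frequency-localized, so after you peel off the ``main'' piece $(\rho u)_{\le Q}$ (or $(\rho u^2)_{\le Q}$ in the case $f=\rho u$, which itself requires an extra product decomposition you do not supply), the remainders --- e.g.\ $f_{Q_+}U$ and $f_{\le Q+2}\,\rho_{\le Q}^{-1}[(\rho u)_{\le Q}-\rho_{\le Q}u_{\le Q}]$ --- sit inside $\L_\a(\cdot)$ but are not supported in a ball of radius $\sim\l_Q$ in Fourier space. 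Neither \eqref{e:Laflow} (which applies only to low-pass caps of a single function) nor Bernstein applies to them, and \eqref{e:comm} controls only the $L^2$ norm of the commutator, not $\L_\a$ of a product involving it. Even the piece $f_{\le Q+2}u_{\le Q}$, which \emph{is} localized at frequencies $\lesssim\l_Q$, costs a full $\l_Q^{\a}$ under $\L_\a$ by Bernstein, and paired with $\|g_{Q_+}\|_{L^2}\lesssim\l_Q^{-\a/2}\widetilde{D}_Q(g)$ this leaves a divergent factor $\l_Q^{\a/2}$. This is precisely the obstruction the paper flags (the relevant products live only in $H^{-\a/2}$, so the commutator estimate ``does not directly apply'').

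The missing idea is to keep $\L_\a$ on the localized block, accept $\|\L_\a g_{Q_+}\|_{L^2}\lesssim\l_Q^{\a/2}\widetilde{D}_Q(g)$, and recover the compensating $\l_Q^{-\a/2}$ from the \emph{other} factor of the pairing: since $g_{Q_+}=(g_{Q_+})_{>Q-1}$, one has $\int h\,\L_\a g_{Q_+}\dx=\int h_{>Q-1}\,\L_\a g_{Q_+}\dx$. After replacing $U$ by $u_{\le Q}$ modulo the commutator $(\rho u)_{\le Q}-\rho_{\le Q}u_{\le Q}$ (which is estimated separately and yields $D^{\a/2}_{2,Q}(\rho)D^{\a/2}_{2,Q}(u)$), the paper writes $[(u_{\le Q})_{\le Q}f_{\le Q+2}]_{>Q-1}=(fu)_{>Q-1}-(f_{>Q+2}u)_{>Q-1}-[(u_{>Q}+(u_{>Q})_{\le Q})f_{\le Q+2}]_{>Q-1}$ and bounds each piece in $L^2$ by $\l_Q^{-\a/2}$ times $\widetilde{D}_Q(fu)$, $\widetilde{D}_Q(f)$, $\widetilde{D}_Q(u)$ respectively, via \eqref{e:highalph}; this is what produces the term $[\widetilde{D}_Q(fu)+\widetilde{D}_Q(f)+\widetilde{D}_Q(u)]\widetilde{D}_Q(g)$ in the statement. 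You should also spell out the remaining subtraction terms $U_{Q+1}f_{\le Q}\L_\a g_{(Q-2)_+}$, $U_{Q+2}f_{\le Q}\L_\a g_Q$, etc., which require splitting $U_{Q+1}=[U-u_{\le Q}]_{Q+1}+(u_{\le Q})_{Q+1}$; your proposal leaves these unaddressed.
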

\begin{proof}
	
We replace and $g$ with $\L_\a g$ in the decomposition of the Lemma, then multiply by $U$ and integrate. 
\begin{align*}
\int [(f \L_\a g)_{\le Q} - f_{\le Q} \L_\a g_{\le Q}] U\dx
& = \int U_{\le Q} \sum_{q>Q+2} [ f_q \L_\a g_{q_-} + f_{q_-} \L_\a g_q  + f_q \L_\a g_q]\dx\\
& \hspace{5 mm} + \int U_{\le Q} f_{\le Q+2} \L_\a g_{Q_+} \dx \\
& \hspace{5 mm} + \int  [U_{\le Q} f_{Q_+} \L_\a g_{\le Q}  - U_{Q+1} f_{(Q-2)_+} \L_\a g_{\le Q-2}  - U_{Q+2} f_{Q} \L_\a g_{\le Q-1}]\dx\\
& \hspace{5 mm} - \int [U_{Q+1} f_{\le Q} \L_\a g_{(Q-2)_+} + U_{Q+2} f_{\le Q} \L_\a g_Q ]\dx \\
& =: \mathrm{I} + \mathrm{II} + \mathrm{III} - \mathrm{IV}.
\end{align*}
Note that we have moved the outermost Littlewood-Paley projections onto the $U$'s and regrouped some terms.  
	
We estimate $\mathrm{I}$ and $\mathrm{II}$, as well as the first term in each of $\mathrm{III}$ and $\mathrm{IV}$.  The remaining terms in $\mathrm{III}$ and $\mathrm{IV}$ can be estimated similarly.
\begin{align*}
|\mathrm{I}| & \le \|U_{\le Q}\|_{L^\infty} \sum_{q>Q+2} [ \|f_q\|_{L^2} \|\L_\a g_{q_-}\|_{L^2} + \|f_{q_-}\|_{L^2} \|\L_\a g_q\|_{L^2}  + \|f_q\|_{L^2} \|\L_\a g_q \|_{L^2} ] \\
& \lesssim \sum_{q>Q+2} [ \|f_{q-2}\|_{L^2} +\|f_{q-1}\|_{L^2} + \|f_{q}\|_{L^2}]\cdot \l_q^\a [ \|g_{q-2}\|_{L^2} +\|g_{q-1}\|_{L^2} + \|g_q\|_{L^2}].
\end{align*}
Then by Cauchy-Schwarz, we conclude that 
\begin{equation}
\label{e:Ibd}
|\mathrm{I}| \lesssim \left[ \sum_{q > Q} \l_q^\a \|f_q \|_{L^2}^2 \right]^{\frac12}  \left[ \sum_{q > Q} \l_q^\a \|g_q \|_{L^2}^2 \right]^{\frac12}.
\end{equation}

The next term is the most troublesome.  We begin by rewriting $U$ as $\rho_{\le Q}^{-1}[(\rho u)_{\le Q} - \rho_{\le Q}u_{\le Q}] + u_{\le Q}$ and splitting the integral.
\begin{align*}
\mathrm{II} 
& = \int  \rho_{\le Q}^{-1} [(\rho u)_{\le Q} - \rho_{\le Q} u_{\le Q}]_{\le Q} f_{\le Q+2} \L_\a g_{Q_+}\dx + \int (u_{\le Q})_{\le Q} f_{\le Q+2} \L_\a g_{Q_+} \dx. 
\end{align*}
We bound the first term of $\mathrm{II}$ as follows:
\begin{align*}
\left| \int  \rho_{\le Q}^{-1}[(\rho u)_{\le Q}-u_{\le Q}]_{\le Q} f_{\le Q+2} \L_\a g_{Q_+}\dx\right| 
& \le \|\rho_{\le Q}\|_{L^\infty} \|(\rho u)_{\le Q} - \rho_{\le Q} u_{\le Q}\|_{L^2} \|f_{\le Q+2}\|_{L^\infty} \|\L_\a g_{Q_+}\|_{L^2} \\
& \le C\cdot C \l_Q^{-\a} D_{2,Q}^{\a/2}(\rho) D_{2,Q}^{\a/2}(u) \cdot C \cdot C\l_Q^{\a} \\
& \lesssim D_{2,Q}^{\a/2}(\rho) D_{2,Q}^{\a/2}(u).
\end{align*}
To estimate the second term, we recall that $g_{Q_+} = (g_{Q_+})_{>Q-1}$; we can then move the projection $>Q-1$ onto the other term $(u_{\le Q})_{\le Q} f_{\le Q+2}$ in the integrand: 
\[
\int (u_{\le Q})_{\le Q} f_{\le Q+2} \L_\a g_{Q_+} \dx = \int [(u_{\le Q})_{\le Q} f_{\le Q+2}]_{>Q-1} \L_\a g_{Q_+} \dx
\]
To see why this is useful, we need to massage the resulting expression a bit:
\begin{align*}
[(u_{\le Q})_{\le Q} f_{\le Q+2}]_{>Q-1}
& = [ (u_{\le Q} - (u_{\le Q})_{>Q})(f-f_{>Q+2}) ]_{>Q-1} \\
& = [ (u - u_{>Q} - (u_{>Q})_{\le Q})(f-f_{>Q+2}) ]_{>Q-1} \\
& = (fu)_{>Q-1} - (f_{>Q+2}u)_{>Q-1} - [(u_{>Q}+(u_{>Q})_{\le Q})f_{\le {Q+2}}]_{>Q-1}.
\end{align*}
The point is that, taking $L^2$ norms, we can now apply \eqref{e:highalph} to every term in this last expression above:
\begin{align*}
\|[(u_{\le Q})_{\le Q} f_{\le Q+2}]_{>Q-1}\|_{L^2}
& \le \|(fu)_{>Q-1}\|_{L^2}  + \|f_{>Q+2}\|_{L^2} \|u\|_{L^\infty} + 2\|u_{>Q}\|_{L^2} \|f\|_{L^\infty} \\
& \lesssim \l_Q^{-\a/2}[ \widetilde{D}_Q(fu) + \widetilde{D}_Q(f) + \widetilde{D}_Q(u) ]
\end{align*}
Thus  
\begin{align*}
\left| \int [(u_{\le Q})_{\le Q} f_{\le Q+2}] \L_\a g_{Q_+} \dx\right|
& \le \left\|[(u_{\le Q})_{\le Q} f_{\le Q+2}]_{>Q-1}\right\|_{L^2} \|\L_\a g_{Q_+} \|_{L^2} \\
& \lesssim [\widetilde{D}_Q(fu) + \widetilde{D}_Q(f) + \widetilde{D}_Q(u)] \widetilde{D}_Q(g).
\end{align*}
Overall, $\mathrm{II}$ is bounded by 
\begin{equation}
\label{e:IIbd}
|\mathrm{II}| \lesssim D_{2,Q}^{\a/2}(\rho) D_{2,Q}^{\a/2}(u) + [\widetilde{D}_Q(fu) + \widetilde{D}_Q(f) + \widetilde{D}_Q(u)] \widetilde{D}_Q(g).
\end{equation}

We estimate first term in $\mathrm{III}$ as follows:
\begin{align*}
\left| \int U_{\le Q} f_{Q_+} \L_\a g_{\le Q}\dx \right| 
& \le \|U\|_{L^\infty} \|f_{Q_+}\|_{L^2} \|\L_\a g_{\le Q}\|_{L^2} \\
& \le C\cdot C\l_Q^{-\a/2}\widetilde{D}_Q(f) \cdot \l_Q^{\a/2}\widetilde{D}_Q(g) \lesssim \widetilde{D}_Q(f) \widetilde{D}_Q(g)
\end{align*}
The second and third terms in $\mathrm{III}$ enjoy the same bound, which is proved the same way.  Thus
\begin{equation}
\label{e:IIIbd}
|\mathrm{III}|\lesssim \widetilde{D}_Q(f) \widetilde{D}_Q(g).
\end{equation}
Finally, the first term in $\mathrm{IV}$ is bounded by 
\begin{align*}
\left| \int U_{Q+1} f_{\le Q} \L_\a g_{(Q-2)_+}\dx \right|  
& = \left| \int [U-u_{\le Q}]_{Q+1} f_{\le Q} \L_\a g_{(Q-2)_+}\dx + \int (u_{Q+1})_{\le Q} f_{\le Q} \L_\a g_{(Q-2)_+}\dx \right| \\
& \lesssim D^{\a/2}_{2,Q}(\rho)D^{\a/2}_{2,Q}(u) + \widetilde{D}_Q(u) \widetilde{D}_Q(g).
\end{align*}
(The intermediate steps are all similar to those used for previous terms.)  And the other term in $\mathrm{IV}$ enjoys the same bound, so that 
\begin{equation}
\label{e:IVbd}
|\mathrm{IV}|\lesssim D^{\a/2}_{2,Q}(\rho)D^{\a/2}_{2,Q}(u) + \widetilde{D}_Q(u) \widetilde{D}_Q(g).
\end{equation}
Combining \eqref{e:Ibd}, \eqref{e:IIbd}, \eqref{e:IIIbd}, and \eqref{e:IVbd}, we obtain the desired statement.  	
\end{proof}

\begin{COROL}
We have 
\begin{align*}
& \hspace{- 15 mm} \left| \int \rho \cT(\rho, u)U_{\le Q} - \rho_{\le Q} u_{\le Q} \cT(\rho_{\le Q}, u_{\le Q})\dx \right| \\
& \lesssim \left[ \sum_{q > Q} \l_q^\a \|\rho_q \|_{L^2}^2 \right]^{\frac12}  \left[ \sum_{q > Q} \l_q^\a \|(\rho u)_q \|_{L^2}^2 \right]^{\frac12} + D^{\a/2}_{2,Q}(\rho) D^{\a/2}_{2,Q}(u) \\
& \hspace{5 mm}
+ [ \widetilde{D}_Q(\rho u) + \widetilde{D}_Q(\rho) + \widetilde{D}_Q(u)] \widetilde{D}_Q(\rho u) 
+ [ \widetilde{D}_Q(\rho u^2) + \widetilde{D}_Q(u)] \widetilde{D}_Q(\rho).
\end{align*}
Consequently, we have 
\begin{equation}
\label{e:LPdiss}
\int_0^t \left| \int \rho \cT(\rho, u)U_{\le Q} - \rho_{\le Q} u_{\le Q} \cT(\rho_{\le Q}, u_{\le Q})\dx \right|\dt \to 0,
\quad \text{ as } Q \to \infty.
\end{equation}
\end{COROL}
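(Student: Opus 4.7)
The pointwise bound is a matter of assembling earlier work. Recall from the start of Section \ref{ss:dissconv} the four-way decomposition
\[
\int \rho \cT(\rho, u)U_{\le Q} - \rho_{\le Q} u_{\le Q} \cT(\rho_{\le Q}, u_{\le Q})\dx = A_1 + A_2 + B_1 + B_2,
\]
together with the direct estimate $|A_1|+|B_2| \lesssim D^{\a/2}_{2,Q}(\rho)\,D^{\a/2}_{2,Q}(u)$. The remaining two integrals are commutator expressions of precisely the form handled by the preceding Proposition: applying it with $(f,g)=(\rho,\rho u)$ bounds $A_2$, and applying it with $(f,g)=(\rho u,\rho)$ bounds $B_1$. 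Summing the four estimates gives the inequality stated in the Corollary.

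To deduce the convergence \eqref{e:LPdiss}, the plan is to apply the dominated convergence theorem on $s\in[0,t]$. For the dominating function: because $\rho,u\in L^\infty(\T\times[0,T])\cap L^2(0,T;H^{\a/2})$ and $L^\infty\cap H^{\a/2}$ is an algebra, the products $\rho u$ and $\rho u^2$ also lie in $L^2(0,T;H^{\a/2})$. Using that $\widetilde{K}$ and $K^{\a/2}$ are in $\ell^1(\Z)$ (the former because $\a/2>0$, the latter because $\a<2$), one sees that each $\widetilde{D}_Q(g)$ and each $D^{\a/2}_{2,Q}(g)$ is dominated, uniformly in $Q$, by the Besov norm $\|g\|_{B^{\a/2}_{2,\infty}}\lesssim\|g\|_{H^{\a/2}}$. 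The leading ``tail'' term on the right side of the Corollary is bounded by $\|\rho\|_{H^{\a/2}}\|\rho u\|_{H^{\a/2}}$. Hence, applying Cauchy--Schwarz termwise, the integrand in \eqref{e:LPdiss} is dominated by an $L^1(0,t)$ function.

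For the pointwise vanishing at a.e.\ $s\in[0,t]$: at every such $s$ the functions $\rho(s), u(s), \rho u(s), \rho u^2(s)$ all lie in $H^{\a/2}$. The first term in the Corollary's bound is a tail of the $H^{\a/2}$ norms of $\rho(s)$ and $\rho u(s)$ and therefore tends to zero. For the remaining terms one needs $\widetilde{D}_Q(g)\to 0$ and $D^{\a/2}_{2,Q}(g)\to 0$ whenever $g\in H^{\a/2}$. This follows from the convolution structure: writing $D^{\a/2}_{2,Q}(g)=(K^{\a/2}\ast d)_Q$ with $d_q=d^{\a/2}_{2,q}(g)$, the assumption $g\in H^{\a/2}$ forces $(d_q)\in\ell^2$ and so $d_q\to 0$; combined with $K^{\a/2}\in\ell^1$, a standard splitting argument (for any $\e>0$ pick $N$ with $d_q<\e$ for $q>N$ and bound the two resulting pieces) shows $(K^{\a/2}\ast d)_Q\to 0$ as $Q\to\infty$. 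The same reasoning handles $\widetilde{D}_Q$.

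\textbf{Main obstacle.} Nothing is genuinely difficult once the Proposition is in place; the only subtle point is that the uniform bound $D^s_{a,Q}(g)\lesssim\|g\|_{B^s_{a,\infty}}$ is insufficient on its own, and one must invoke the convolution-splitting argument above to promote the Besov-regularity of $\rho$, $u$, $\rho u$, and $\rho u^2$ into actual vanishing of the $D$'s, which is what drives the pointwise step of dominated convergence.
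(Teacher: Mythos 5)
Your proposal is correct and follows essentially the same route as the paper: the same decomposition into $A_1+A_2+B_1+B_2$, the same two applications of the preceding Proposition with $(f,g)=(\rho,\rho u)$ and $(\rho u,\rho)$, and the same dominated-convergence conclusion. The convolution-splitting argument you spell out for the pointwise vanishing of $D^{\a/2}_{2,Q}$ and $\widetilde{D}_Q$ is exactly the detail the paper leaves implicit behind its one-line citation of the dominating function $C[\|\rho\|_{H^{\a/2}}^2+\|u\|_{H^{\a/2}}^2]$.
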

\begin{proof}
The displayed bound follows easily from the previous Proposition and the discussion at the beginning of this subsection. Indeed, recall that, in the notation from earlier, 
\[
\int \rho \cT(\rho, u)U_{\le Q} - \rho_{\le Q} u_{\le Q} \cT(\rho_{\le Q}, u_{\le Q})\dx = A_1 + A_2 + B_1 + B_2.
\]
We have already shown above that $A_1 + B_2 \lesssim D_{2,Q}^{\a/2}(\rho) D_{2,Q}^{\a/2}(u)$.  The Proposition gives us bounds for $A_2$ (with $(f,g) = (\rho, \rho u)$) and $B_1$ (with $(f,g) = (\rho u, \rho)$):
\[
A_2 \lesssim \left[ \sum_{q > Q} \l_q^\a \|\rho_q \|_{L^2}^2 \right]^{\frac12}  \left[ \sum_{q > Q} \l_q^\a \|(\rho u)_q \|_{L^2}^2 \right]^{\frac12} + D^{\a/2}_{2,Q}(\rho) D^{\a/2}_{2,Q}(u)
+ [ \widetilde{D}_Q(\rho u) + \widetilde{D}_Q(\rho) + \widetilde{D}_Q(u)] \widetilde{D}_Q(\rho u).
\]
\[
B_1 \lesssim \left[ \sum_{q > Q} \l_q^\a \|\rho_q \|_{L^2}^2 \right]^{\frac12}  \left[ \sum_{q > Q} \l_q^\a \|(\rho u)_q \|_{L^2}^2 \right]^{\frac12} + D^{\a/2}_{2,Q}(\rho) D^{\a/2}_{2,Q}(u)
+ [ \widetilde{D}_Q(\rho u^2) + \widetilde{D}_Q(\rho u) + \widetilde{D}_Q(u)] \widetilde{D}_Q(\rho). 
\]
Adding up the bounds on $A_1$, $A_2$, $B_1$, and $B_2$, we obtain the displayed estimate claimed in the Corollary.

The claimed limit then follows by the dominated convergence theorem, with dominating function 
$C[\|\rho\|_{H^{\a/2}}^2 + \|u\|_{H^{\a/2}}^2]$. 
\end{proof}
This completes the proof of Theorem \ref{t:endiss}.

\subsection{Energy Balance for the $\rho$ Equation}
\label{ss:ebrho}

It is not difficult to show that \eqref{e:rhoee} holds under the same assumptions as we proved for \eqref{e:ee}. Actually, something slightly more general is true:
\begin{PROP}
Let $(u, \rho, e)$ be a weak solution on $[0,T]$ and  assume that $u$ and $\rho$ satisfy
\begin{equation}
\label{e:besovrhoe}
\rho\in L^a(0,T;B^\s_{a,\infty}),
\quad 
u\in L^b(0,T;B^\t_{b,c_0}),
\quad \quad 
\frac2a + \frac1b = 2\s + \t = 1.
\end{equation}
Then \eqref{e:rhoee} holds for a.e. $t\in [0,T]$.
\end{PROP}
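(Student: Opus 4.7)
My plan is to mimic the Littlewood--Paley strategy of Theorem~\ref{t:en}. Projecting the weak density equation onto $P_{\le Q}$, multiplying by $\rho_{\le Q}$, integrating by parts in $x$, decomposing $(\rho u)_{\le Q} = \rho_{\le Q} u_{\le Q} + r_Q$ with commutator $r_Q := (\rho u)_{\le Q} - \rho_{\le Q} u_{\le Q}$, and substituting the projected compatibility relation $u'_{\le Q} = e_{\le Q} + \L_\a \rho_{\le Q}$ yields the truncated energy identity
\begin{equation*}
\int_\T \rho_{\le Q}^2(t)\dx - \int_\T (\rho_0)_{\le Q}^2\dx + \int_0^t \!\!\int_\T \left[ e_{\le Q}\rho_{\le Q}^2 + \rho_{\le Q}^2 \L_\a \rho_{\le Q}\right]\dx\ds = 2\int_0^t \!\!\int_\T r_Q \,\rho_{\le Q}'\dx\ds,
\end{equation*}
and I recover \eqref{e:rhoee} by sending $Q\to\infty$ in every term.

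The first three terms converge to their natural limits using only the generic weak-solution regularity: $L^\infty$ bounds on $\rho$, $e$ combined with the strong convergences $\rho_{\le Q}\to\rho$ and $e_{\le Q}\to e$ in $L^p(\T)$ for any $p<\infty$. For the dissipation I would rewrite it in symmetric form and decompose $A_Q B_Q - AB = (A_Q-A) B + A_Q(B_Q-B)$, where $A_Q := \rho_{\le Q}(x)+\rho_{\le Q}(y)$ and $B_Q := |\rho_{\le Q}(x)-\rho_{\le Q}(y)|^2/|x-y|^{1+\a}$ (with $A,B$ the limiting analogues). The $(A_Q-A)B$ piece, whose integrand vanishes pointwise a.e.\ and is dominated by $4\|\rho\|_{L^\infty}|\rho(x)-\rho(y)|^2/|x-y|^{1+\a}$ (integrable by $\rho\in L^2 H^{\a/2}$), disappears by dominated convergence. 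The $A_Q(B_Q-B)$ piece, after applying $|a|^2 - |b|^2 = (a-b)(a+b)$ to increments and Cauchy--Schwarz, is bounded by $C\|\rho\|_{L^\infty}\|\rho_{\le Q}-\rho\|_{H^{\a/2}}\|\rho_{\le Q}+\rho\|_{H^{\a/2}}$, which vanishes by strong convergence in $L^2 H^{\a/2}$. Crucially, this step invokes no Besov hypothesis.

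The commutator term is where \eqref{e:besovrhoe} is needed. Combining \eqref{e:comm} to bound $\|r_Q\|_{L^c}\lesssim \l_Q^{-\sigma-\tau} D^\sigma_{a,Q}(\rho) D^\tau_{b,Q}(u)$ (with $1/c = 1/a+1/b$) and \eqref{e:gradflow} to bound $\|\rho_{\le Q}'\|_{L^a}\lesssim \l_Q^{1-\sigma} D^\sigma_{a,Q}(\rho)$, and noting that the scaling $2/a+1/b=1$ gives $1/c+1/a=1$ exactly, while $2\sigma+\tau=1$ gives $1-2\sigma-\tau=0$, Hölder collapses the $\l_Q$ factors to leave
\begin{equation*}
\left|\int_\T r_Q \,\rho_{\le Q}'\dx\right| \lesssim \left[D^\sigma_{a,Q}(\rho)\right]^2 D^\tau_{b,Q}(u).
\end{equation*}
The $c_0$ hypothesis on $u$ together with \eqref{eq:Ddsim} forces $D^\tau_{b,Q}(u)\to 0$ a.e.\ in $t$, while the scaling furnishes an $L^1(0,T)$ majorant; dominated convergence in time then kills the flux. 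I expect the chief subtlety to be the dissipation step: since $\rho_{\le Q}$ does not converge to $\rho$ in $L^\infty$, one must take care to never localize the singular kernel against a $Q$-dependent function, and the splitting above is designed to isolate one piece against the \emph{fixed} kernel $B$ and one piece pairing two $H^{\a/2}$-type increments, sidestepping this obstruction.
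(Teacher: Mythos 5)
Your proposal is correct and follows essentially the same route as the paper: test the weak density equation with $(\rho_{\le Q})_{\le Q}$, use the projected compatibility relation to produce the $e_{\le Q}\rho_{\le Q}^2$ and $\rho_{\le Q}^2\L_\a\rho_{\le Q}$ terms, bound the commutator flux by $D^\s_{a,Q}(\rho)^2 D^\t_{b,Q}(u)$ via \eqref{e:comm} and \eqref{e:gradflow}, and pass to the limit in the dissipation by the symmetrized splitting (fixed kernel piece by dominated convergence, increment piece by Cauchy--Schwarz in $H^{\a/2}$). The only difference is cosmetic: the paper outsources the dissipation convergence to the argument of Theorem~\ref{t:endiss}, whereas you spell out the (correct) details, which here reduce to the easier half of that argument since the term is already fully mollified.
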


\begin{REMARK}
As suggested above, \eqref{e:besovee} is a special case of \eqref{e:besovrhoe}.  The reason the latter hypothesis is more flexible is that our proof of Theorem \ref{t:noflux} strongly depends on the fact that $\rho u\in B^{1/3}_{3,\infty}\cap L^\infty$ (by the algebra property of this space), whereas the argument of the present Proposition above requires information only about $\rho$ and $u$.  
\end{REMARK}

\begin{proof}
Substitute $(\rho_{\le Q})_{\le Q}$ into the weak density equation.  This gives	
\begin{align*}
\frac12\int_\T \rho(s)_{\le Q}^2 \dx\bigg|^t_0 
& = \int_0^t \int_\T (\rho u)_{\le Q} \rho_{\le Q}'\dx\ds \\
& = \int_0^t \int_\T [(\rho u)_{\le Q} - \rho_{\le Q} u_{\le Q} ] \rho_{\le Q}'\dx\ds  - \frac12 \int_0^t \int_\T \rho_{\le Q}^2 (e_{\le Q} + \L_a\rho_{\le Q}) \dx\ds.
\end{align*}
The first integral vanished as $Q\to \infty$, since
\[
\left| \int_0^t \int_\T [(\rho u)_{\le Q} - \rho_{\le Q} u_{\le Q} ] \rho_{\le Q}'\dx\ds \right|
\le \int_0^t D^s_{a,Q}(\rho)^2 D^t_{b,Q}(u) \ds.
\]
The other terms tend to their natural limits.  The only convergence which requires some justification is 
\[
\int_0^t \int_\T \rho_{\le Q}^2 \L_\a \rho_{\le Q}\dx\ds 
\to \frac12 \int_0^t \int_\T \int_\R (\rho(x)+\rho(y))\frac{|\rho(x) - \rho(y)|^2}{|x-y|^{1+\a}}\dy\dx\ds,
\text{ as } Q\to \infty.
\]
But this follows from an argument entirely similar to that of the convergence \eqref{e:LPdiss} which is proved above. We therefore omit the proof.
\end{proof}

\section{More Bounds on Regular Solutions: Toward a Theory of Strong Solutions}

\label{s:Wbds}

In order to construct strong solutions, we use essentially the same limiting process that we did for weak solutions.  In order to carry out this procedure up one level in regularity, we also give $L^\infty$ (and H\"older) bounds on $u'$, $\rho'$, and $e'$ below, and we track dependence of these bounds on the initial data as before.

\subsection{$L^\infty$ Bounds on Derivatives}
\label{ss:W1infty}

In this subsection, we prove $L^\infty$ bounds on $\rho'$, $u'$, and $e'$.  The density once again requires the most work.  We begin by eliminating all derivatives of $u$ from the $\rho'$ equation.  Recall that $u' = e + \L_\a \rho$.  Replacing $u''$ by $e' + \L_\a \rho'$ does not automatically eliminate the need to estimate derivatives of $u$, since the $e'$ equation involves $u'$.  Therefore we replace $e'$ with $q'$ via 
\begin{equation}
\label{e:exqx}
e' = \rho^2 \bigg( \frac{q'}{\rho} \bigg) + q \rho', 
\end{equation}
so that 
\begin{equation}
\rho u'' = \rho^3 \bigg( \frac{q'}{\rho} \bigg) + e \rho' + \rho\L_\a \rho'.
\end{equation}
This is a more satisfactory replacement in light of the transport equation \eqref{e:qxrhotrans} satisfied by $q'/\rho$.

In light of the above considerations, we write the $\rho'$ equation as 
\begin{equation}
\label{e:rhox}
\rho'_t + u\rho'' + \rho\L_\a \rho' = - \rho^3 \left( \frac{q'}{\rho} \right) - 2e\rho' - \rho' \L_\a \rho.
\end{equation}
We multiply by $\rho'$ and evaluate at a maximum $x_+(t)$ of $|\rho'(\cdot, t)|$, yielding
\begin{align*}
\frac12 \p_t [(\rho')^2](x_+(t),t) 
& = - \rho^3 \rho' \left( \frac{q'}{\rho} \right)(x_+(t),t) - 2e(\rho')^2(x_+(t),t) \\
& \quad \quad - (\rho')^2 \L_\a \rho(x_+(t),t) - \rho \rho' \L_\a \rho'(x_+(t),t)\\
& \le \|\rho(t)\|_{L^\infty}^3 \|\rho'(t)\|_{L^\infty} \left\| \frac{q'}{\rho}(t) \right\|_{L^\infty} + 2\|e(t)\|_{L^\infty} \|\rho'(t)\|_{L^\infty}^2 \\
& \quad \quad + |(\rho')^2 \L_\a \rho(x_+(t),t)| - \rho \rho' \L_\a \rho'(x_+(t),t).
\end{align*}
In order to bound $\|(q'/\rho)(t)\|_{L^\infty}$, we evaluate \eqref{e:qxrhotrans} at a maximum of $(q'/\rho)(\cdot, t)$, integrate in time, and substitute the previously obtained lower bound for $\rho$.  The result is
\begin{align*}
\left\| \frac{q'}{\rho}(t) \right\|_{L^\infty}
& \le \left\| \frac{q_0'}{\rho_0} \right\|_{L^\infty} + \int_0^t \left\| \frac{f''}{\rho^2} - \frac{f' \rho'}{\rho^3}\right\|_{L^\infty}\ds \\
& \le \|q_0'\|_{L^\infty} \|\rho_0^{-1}\|_{L^\infty} +  \frac{\|f''\|_{L^\infty_{x,t}}}{2 c_0^2 c_1} [\exp(2c_1 t)-1] + \frac{\|f'\|_{L^\infty_{x,t}}}{c_0^3} \int_0^t \exp(3c_1 s)\|\rho'(s)\|_{L^\infty}\ds.
\end{align*}
For the present purposes, the following rougher bound will suffice:
\begin{equation}
\label{e:qxuprough}
\left\| \frac{q'}{\rho}(t) \right\|_{L^\infty} \le C_T(\sup_{s\in [0,t]}\|\rho'(s)\|_{L^\infty} + 1), \quad t\in [0,T].
\end{equation}
Here $C_T$ is a constant that depends on $\|q_0\|_{W^{1,\infty}}$, $T$, $\|f\|_{L^\infty_t W^{2,\infty}_x}$, $\a$, $\cM$, and the $L^\infty$ norms of $\rho_0$, and $\rho_0^{-1}$.  However, for the remainder of this subsection, we will use $C_T$ to denote a constant that can depend on $\|f\|_{L^\infty_t W^{2,\infty}_x}$, $\cM$, $\a$, $T$, $\|\rho_0^{-1}\|_{L^\infty}$, and the $W^{1,\infty}$ norms of $u_0$, $\rho_0$, and $e_0$.  Recalling that $|\rho(x,t)|\le C_T$ and $|e(x,t)|\le C_T$, we have proved the following bound, which we pause to record as a Lemma.

\begin{LEMMA}
Let $(u, \rho)$ be a regular solution.  If $x_+(t)$ is a maximum of $|\rho'(\cdot, t)|$, then 
\begin{equation}
\label{e:drhobd1}
\frac12 \p_t [(\rho')^2](x_+(t),t) 
\le C_T(\sup_{s\in [0,t]}\|\rho'(s)\|_{L^\infty}^2 + 1) + |(\rho')^2 \L_\a \rho(x_+(t),t)| - \rho \rho' \L_\a \rho'(x_+(t),t),
\end{equation}
where $C_T$ is a constant that may depend on $\|f\|_{L^\infty_t W^{2,\infty}_x}$, $\cM$, $\a$, $T$, $\|\rho_0^{-1}\|_{L^\infty}$, and the $W^{1,\infty}$ norms of $u_0$, $\rho_0$, and $e_0$.
\end{LEMMA}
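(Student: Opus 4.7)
The plan is to apply a pointwise maximum-principle argument to the already-derived equation \eqref{e:rhox}. I multiply \eqref{e:rhox} by $\rho'$ and evaluate at a point $x_+(t)$ where $|\rho'(\cdot,t)|$ attains its spatial maximum. At such a point $\p_x(\rho')^2 = 2\rho'\rho'' = 0$, so the transport contribution $u\rho'\rho''$ drops out, and the Rademacher-type lemma recorded in Section~\ref{ss:FO} justifies differentiating $[(\rho')^2](x_+(t),t)$ in $t$. What remains is the pointwise identity
\[
\tfrac{1}{2}\p_t[(\rho')^2](x_+,t) = -\rho^3\rho'(q'/\rho) - 2e(\rho')^2 - (\rho')^2\L_\a \rho - \rho\rho'\L_\a \rho',
\]
evaluated at $x_+(t)$. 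The last two summands will later be harnessed via fractional coercivity, so I retain them in the statement; the first two I will absorb into $C_T$.

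The term $2e(\rho')^2$ is controlled directly by $C_T\|\rho'(t)\|_{L^\infty}^2$ using the $L^\infty$ bound on $e$ from Proposition~\ref{p:Linfty}. For $\rho^3\rho'(q'/\rho)$, the $L^\infty$ bound on $\rho$ reduces matters to estimating $\|(q'/\rho)(t)\|_{L^\infty}$, which is the real crux. Here I exploit the transport equation \eqref{e:qxrhotrans}: evaluate it at a spatial max of $|q'/\rho|$, integrate in time, and split the source $(f'/\rho)'/\rho = f''/\rho^2 - f'\rho'/\rho^3$. The $f''/\rho^2$ piece is bounded directly through $\|f''\|_{L^\infty_{x,t}}$ and the lower bound \eqref{e:rhobds} on $\rho$; the $f'\rho'/\rho^3$ piece is dominated by $C_T\int_0^t \|\rho'(s)\|_{L^\infty}\,\mathrm{d}s \le C_T\,T\,\sup_{s\le t}\|\rho'(s)\|_{L^\infty}$, again using the density lower bound. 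This produces the intermediate bound \eqref{e:qxuprough}, and substituting back into the identity above (with a Young-type splitting $\|\rho'\|_{L^\infty}\|q'/\rho\|_{L^\infty}\lesssim \|\rho'\|_{L^\infty}^2 + \|q'/\rho\|_{L^\infty}^2$, then absorbing constants and the harmless $1$) yields the stated inequality with the advertised dependencies of $C_T$.

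The main obstacle is the circular coupling between $\|\rho'\|_{L^\infty}$ and $\|q'/\rho\|_{L^\infty}$: the transport equation for $q'/\rho$ is driven by a source involving $\rho'$, while the evolution equation for $\rho'$ contains $q'/\rho$ on its right-hand side. The algebraic substitution $e' = \rho^2(q'/\rho) + q\rho'$ from \eqref{e:exqx}, which the paper uses to eliminate $u''$ in favor of the transported quantity $q'/\rho$ rather than the less-tractable $e'$, is precisely what renders this coupling linear in both variables. Consequently both bounds close together, and the eventual $L^\infty$ estimate for $\rho'(t)$ can be obtained by a Gronwall argument after the retained terms $(\rho')^2\L_\a\rho$ and $\rho\rho'\L_\a\rho'$ are handled via a nonlinear maximum principle in a subsequent step.
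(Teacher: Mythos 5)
Your proposal is correct and follows essentially the same route as the paper: multiply \eqref{e:rhox} by $\rho'$, evaluate at the maximum of $|\rho'|$ so the transport term vanishes, control $2e(\rho')^2$ by the $L^\infty$ bound on $e$, and bound $\|q'/\rho\|_{L^\infty}$ via the transport equation \eqref{e:qxrhotrans} together with the density lower bound to obtain \eqref{e:qxuprough}, then close with a Young-type splitting. The observations about the role of the substitution $e' = \rho^2(q'/\rho) + q\rho'$ and the linear coupling are accurate and match the paper's reasoning.
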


We now provide some bounds on the final two terms of the above inequality.  We will use the notation
\begin{equation}
\label{e:defDalpha}
D_\a g(y):= \int_\R \frac{|g(y)-g(y+z)|^2}{|z|^{1+\a}}\dz 
\end{equation}
for functions $g$ such that the integral makes sense.

Before proceeding, we make note of the following facts, which will be useful later: The following bounds hold for a maximum $x$ of $g'$ and some absolute constant $c_5$:
\begin{equation}
\label{e:dissterm}
 g' \L_\a g'(x) \ge D_\a g'(x)
\end{equation}
\begin{equation}
\label{e:nlmp}
D_\a g'(x)\ge c_5 \frac{|g'(x)|^{2+\a}}{\|g\|_{L^\infty}^\a}.
\end{equation}
Both of these follow from the nonlinear maximum principle of \cite{ConstVicol}.
	
The `bad' term in the inequality from the Proposition above is $|(\rho')^2 \L_\a\rho|$.  In order to estimate this term, we will use the following decomposition of the fractional Laplacian $\L_\a$:

\begin{LEMMA}
Let $\f\in C^\infty_c(\R)$ be even, identically $1$ on $[-1,1]$, and supported in $(-2,2)$. The following decomposition holds for sufficiently smooth $g$ and any $r>0$:
\begin{equation}
\label{e:Ldecomp}
\L_\a g(x) = \int_\R \frac{z}{\a}\f\left( \frac{z}{r} \right) \frac{g'(x) - g'(x+z)}{|z|^{1+\a}}\dz + \int_\R \left[ 1 - \f\left( \frac{z}{r} \right) + \frac{z}{\a r}\f'\left( \frac{z}{r} \right)\right] \frac{g(x) - g(x+z)}{|z|^{1+\a}}\dz.
\end{equation}	
Consequently, we have the following bounds:
\begin{equation}
\label{e:Lbdinfty}
|\L_\a g(x)| \le C r^{1 - \frac{\a}{2}} D_\a g'(x)^{\frac12} + C r^{-\a}\|g\|_{L^\infty}.
\end{equation}
\begin{equation}
\label{e:LbdHold}
|\L_\a g(x)| \le C r^{1 - \frac{\a}{2}} D_\a g'(x)^{\frac12} + C r^{\g-\a} [g]_{C^\g}.
\end{equation}
\end{LEMMA}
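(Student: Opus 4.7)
The decomposition \eqref{e:Ldecomp} will arise from splitting the principal-value integral defining $\L_\a g$ via the cutoff $\f(z/r)$ and integrating by parts in the near-zero piece. Concretely, I would first write $\L_\a g(x) = I_{\text{near}} + I_{\text{far}}$, where $I_{\text{near}}$ carries the factor $\f(z/r)$ and $I_{\text{far}}$ the factor $1-\f(z/r)$, and then apply integration by parts to $I_{\text{near}}$ using the pointwise identity $|z|^{-1-\a} = -\a^{-1}\frac{d}{dz}[z|z|^{-1-\a}]$ (valid away from $z=0$). Regularizing the principal value to $|z|>\e$, differentiating the product $\f(z/r)(g(x)-g(x+z))$ produces two terms: the $\f'$-term combines directly with $I_{\text{far}}$ to yield the second integral of \eqref{e:Ldecomp}, while the remaining term $-\a^{-1}\int z\f(z/r)g'(x+z)|z|^{-1-\a}\dz$ can be rewritten as $\int (z/\a)\f(z/r)(g'(x)-g'(x+z))|z|^{-1-\a}\dz$ after freely subtracting $g'(x)$ (since $z\f(z/r)|z|^{-1-\a}$ is odd in $z$, its integral against the constant $g'(x)$ vanishes in the PV sense).

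Two routine technical points need to be verified. First, the boundary contributions at $z=\pm\e$ from the integration by parts sum to $-\f(\e/r)\e^{-\a}[(g(x)-g(x-\e))+(g(x)-g(x+\e))]$, and by Taylor's theorem together with the evenness of $\f$, this equals $\f(\e/r)g''(x)\e^{2-\a} + O(\e^{4-\a})$, which vanishes as $\e\to 0^+$ since $\a<2$. Second, the resulting bulk integrand $(z/\a)\f(z/r)(g'(x)-g'(x+z))|z|^{-1-\a}$ behaves like $|z|^{1-\a}g''(x)$ near the origin and is absolutely integrable for the same reason.

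With \eqref{e:Ldecomp} in hand, both \eqref{e:Lbdinfty} and \eqref{e:LbdHold} follow by estimating each of the two integrals separately. Cauchy--Schwarz on the first integral, together with the observation that $\f(\cdot/r)$ is supported in $\{|z|\le 2r\}$, yields
\[
\left|\int_\R \tfrac{z}{\a}\f(z/r)\tfrac{g'(x)-g'(x+z)}{|z|^{1+\a}}\dz\right| \le C\left(\int_{|z|\le 2r}|z|^{1-\a}\dz\right)^{1/2} D_\a g'(x)^{1/2} \lesssim r^{1-\a/2}D_\a g'(x)^{1/2}.
\]
For the second integral, both $1-\f(z/r)$ and $(z/r)\f'(z/r)$ are supported in $\{|z|\ge r\}$, so I would bound the integrand in absolute value by $C|g(x)-g(x+z)||z|^{-1-\a}$ on that set and then apply either $|g(x)-g(x+z)|\le 2\|g\|_{L^\infty}$ (giving the tail $Cr^{-\a}\|g\|_{L^\infty}$ for \eqref{e:Lbdinfty}) or $|g(x)-g(x+z)|\le [g]_{C^\g}|z|^\g$ (giving $Cr^{\g-\a}[g]_{C^\g}$ for \eqref{e:LbdHold}, noting that one may assume $\g<\a$, else the statement is vacuous for small $r$). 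The main technical obstacle is the careful verification that the boundary terms vanish in the integration-by-parts step; everything else reduces to a direct kernel estimate.
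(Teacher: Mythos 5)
Your proof is correct and follows essentially the same route as the paper: the paper verifies \eqref{e:Ldecomp} by collapsing the right-hand side back to the standard formula for $\L_\a g$ via the identity $|z|^{-1-\alpha}=-\alpha^{-1}\tfrac{d}{dz}\bigl[z|z|^{-1-\alpha}\bigr]$ and one integration by parts, which is exactly your computation read in reverse, and the two estimates are obtained identically (Cauchy--Schwarz on the first integral, pulling out $\|g\|_{L^\infty}$ or $[g]_{C^\gamma}$ on the second). Your explicit verification that the $\varepsilon$-boundary terms vanish (using $\alpha<2$) is a detail the paper omits but is exactly the right justification.
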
	
\begin{proof}
Rewrite the right side of \eqref{e:Ldecomp} as follows:
\begin{equation}
\label{e:Ldecpf}
\begin{split}
\mathrm{RHS} & = 
\int_\R \frac{z \f\left( \frac{z}{r} \right) g'(x)}{\a|z|^{1+\a}} \dz + \int_\R \frac{z}{\a|z|^{1+\a}} \left[\frac{1}{r}\f'\left( \frac{z}{r} \right)(g(x) - g(x+z))  -\f\left( \frac{z}{r} \right) g'(x+z)\right] \dz \\
& \quad + \int_\R \left[ 1 - \f\left( \frac{z}{r} \right) \right] \frac{g(x) - g(x+z)}{|z|^{1+\a}}\dz.
\end{split}
\end{equation}
The first integral vanishes, while the second can be rewritten as 
\[
\int_\R \frac{z}{\a |z|^\a} \frac{\mathrm{d}}{\dz}\left[ \f\left( \frac{z}{r} \right) (g(x)-g(x+z)) \right]\dz = \int_\R \f\left( \frac{z}{r} \right) \frac{g(x)-g(x+z)}{|z|^{1+\a}}\dz,
\]
after integrating by parts.  Combining with the third term in \eqref{e:Ldecpf}, we obtain the usual integral formula for $\L_\a g$.  This completes the proof of \eqref{e:Ldecomp}. To obtain the inequality under consideration, use Cauchy-Schwarz on the first integral in \eqref{e:Ldecomp} and pull out the $L^\infty$ norm or $C^\g$ seminorm in the second.
\end{proof}
	
\begin{LEMMA}
Let $(u, \rho)$ be a regular solution on the time interval $[0,T]$.  The following bounds holds for a maximum $x_+(t)$ of $\rho'(\cdot, t)$ and some constant $C_T$ which may depend only on $\|f\|_{L^\infty_t W^{2,\infty}_x}$, $\cM$, $\a$, $T$, $\|\rho_0^{-1}\|_{L^\infty}$, and the $W^{1,\infty}$ norms of $u_0$, $\rho_0$, and $e_0$.
\begin{equation}
\label{e:badterm}
|(\rho')^2 \L_\a \rho(x_+(t),t)| \le C_T \|\rho'(t)\|_{L^\infty}^{2+\a} + \frac14 \rho_-(t) D_\a \rho'(x_+(t),t), \quad \quad t\in [0,T];
\end{equation}
\begin{equation}
\label{e:badtermH}
|(\rho')^2 \L_\a \rho(x_+(t),t)| \le C_T \frac{\|\rho'(t)\|_{L^\infty}^{2+\a-\g}}{ t^{\g/\a}} + \frac14 \rho_-(t) D_\a \rho'(x_+(t),t), \quad \quad t\in (0,T].
\end{equation}
\end{LEMMA}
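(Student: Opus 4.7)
The strategy for both inequalities is the same: apply the pointwise decomposition bound on $|\L_\a \rho(x_+(t),t)|$ from the preceding lemma (with a free scale $r > 0$), multiply by $(\rho'(x_+(t),t))^2$, use Young's inequality on the $D_\a \rho'(x_+)^{1/2}$ piece, and then invoke the nonlinear maximum principle \eqref{e:nlmp} to reabsorb the residual quadratic-in-$(\rho')$ term back into the dissipation. The free scale will ultimately be chosen of the form $r = c_T/|\rho'(x_+(t),t)|$.

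For \eqref{e:badterm}, I would apply \eqref{e:Lbdinfty} with $g = \rho$, multiply by $(\rho')^2$, and use Young's inequality $ab \le \eta a^2 + b^2/(4\eta)$ with $\eta = \rho_-(t)/8$ to arrive at
\[
(\rho')^2 |\L_\a \rho(x_+)| \le \frac{\rho_-(t)}{8} D_\a \rho'(x_+) + \frac{C r^{2-\a} (\rho')^4}{\rho_-(t)} + C r^{-\a} \|\rho(t)\|_{L^\infty} (\rho')^2.
\]
Writing $(\rho')^4 = (\rho')^{2-\a}(\rho')^{2+\a}$ and invoking \eqref{e:nlmp} in the form $(\rho')^{2+\a} \le \|\rho\|_{L^\infty}^{\a} D_\a \rho'(x_+)/c_5$ rewrites the middle term as $C_T r^{2-\a} (\rho')^{2-\a} D_\a \rho'(x_+)$. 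Choosing $r = c_T/|\rho'(x_+)|$ with $c_T$ sufficiently small (depending on $\rho_-$, $\|\rho\|_{L^\infty}$, and $\a$) absorbs this piece into an extra $(\rho_-/8) D_\a \rho'$, giving $(\rho_-/4) D_\a \rho'$ in total. With the same $r$, the last remaining term collapses to $C_T \|\rho'(t)\|_{L^\infty}^{2+\a}$, which is the claim.

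For \eqref{e:badtermH}, the argument is identical but with \eqref{e:LbdHold} in place of \eqref{e:Lbdinfty}. The bound \eqref{e:LbdHold} requires $\g < \a$, which is no loss: the H\"older exponent from Section \ref{ss:Holder1} can always be reduced. The last non-dissipative term now reads $C r^{\g-\a}[\rho(t)]_{C^\g} (\rho')^2$; with the same choice $r = c_T/|\rho'(x_+)|$ this becomes $C_T [\rho(t)]_{C^\g} \|\rho'(t)\|_{L^\infty}^{2+\a-\g}$, and the explicit H\"older estimate $[\rho(t)]_{C^\g} \le C_T t^{-\g/\a}$ from \eqref{e:rhoHolder2} finishes the proof.

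The main subtle point is the residual $r^{2-\a}(\rho')^4/\rho_-$ produced by Young's inequality: a direct optimization balancing this against the $r^{-\a}\|\rho\|(\rho')^2$ or $r^{\g-\a}[\rho]_{C^\g}(\rho')^2$ term yields an exponent on $|\rho'|$ strictly larger than the target $2+\a$ (resp.\ $2+\a-\g$), and so a naive optimization fails. It is therefore essential to use the nonlinear maximum principle \eqref{e:nlmp} to feed this residual back into the dissipation; only then is $r$ left free to be tuned purely to the other (non-dissipative) term, which is what produces the exponents claimed in \eqref{e:badterm} and \eqref{e:badtermH}.
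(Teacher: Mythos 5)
Your proof is correct and follows essentially the same route as the paper's: apply \eqref{e:Lbdinfty} (resp.\ \eqref{e:LbdHold}) with $g=\rho$, multiply by $(\rho')^2$, use Young's inequality with weight $\rho_-(t)/8$, absorb the residual $r^{2-\a}(\rho')^4/\rho_-$ term into the dissipation via the nonlinear maximum principle \eqref{e:nlmp}, and choose $r\sim \|\rho'(t)\|_{L^\infty}^{-1}$ together with \eqref{e:rhoHolder2} for the H\"older term. One small quibble with your closing commentary only (not with the proof itself): for \eqref{e:badterm} a direct balance of $r^{2-\a}(\rho')^4/\rho_-$ against $r^{-\a}\|\rho\|_{L^\infty}(\rho')^2$ actually yields exactly the exponent $2+\a$, so \eqref{e:nlmp} is genuinely indispensable only for the H\"older version \eqref{e:badtermH}, where naive balancing produces the too-large exponent $2+2(\a-\g)/(2-\g)$.
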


\begin{proof}
We begin by putting $g=\rho$ in \eqref{e:LbdHold}. We use \eqref{e:rhoHolder2} for the first term; on the second we use Young's inequality, followed by \eqref{e:nlmp}.
\begin{align*}
|(\rho')^2 \L_\a \rho(x_+(t),t)| 
& \le Cr^{\g-\a} \|\rho'(t)\|_{L^\infty}^2 [\rho(t)]_{C^\g}  + D_\a \rho'(x_+(t), t)^{1/2} \cdot  C\|\rho'(t)\|_{L^\infty}^2 r^{1-\a/2} \\
& \le C_T r^{\g-\a}\frac{\|\rho'(t)\|_{L^\infty}^2}{t^{\g/\a}} + \frac{1}{8} \rho_-(t) D_\a \rho'(x_+(t), t) + c_6 \rho_-(t)^{-1} \|\rho'(t)\|_{L^\infty}^4 r^{2-\a} \\
& \le C_T \frac{\|\rho'\|_{L^\infty}^{2+\a-\g}}{t^{\g/\a}} + \frac{1}{8} \rho_-(t) D_\a \rho'(x_+(t), t) + \frac{c_5}{8} \frac{\rho_-(t)}{\rho_+(t)^\a} \|\rho'(t)\|_{L^\infty}^{2+\a} \\
& \le C_T \frac{\|\rho'\|_{L^\infty}^{2+\a-\g}}{t^{\g/\a}} + \frac{1}{4}\rho_-(t) D_\a \rho'(x_+(t), t),
\end{align*}
provided that we choose 
\[
r = \left[ \frac{c_5 \rho_-(t)^2}{8c_6 \rho_+(t)^\a} \right]^{\frac{1}{2-\a}} \|\rho'(t)\|_{L^\infty}^{-1},
\]
where $c_5$ is the constant from \eqref{e:nlmp}.

The inequality \eqref{e:badterm} is established similarly, starting with \eqref{e:Lbdinfty} instead of \eqref{e:LbdHold}. 
\end{proof}

\begin{THEOREM}
\label{t:drhobd}
Let $(u, \rho)$ be a regular solution on the time interval $[0,\infty)$.  For each $T>0$, there exists a constant $C_T^{\rho'}$, which may depend on $\|f\|_{L^\infty_t W^{2,\infty}_x}$, $\cM$, $\a$, $T$, $\|\rho_0^{-1}\|_{L^\infty}$, and the $W^{1,\infty}$ norms of $u_0$, $\rho_0$, and $e_0$,  such that 
\begin{equation}
\|\rho'(t)\|_{L^\infty} \le C_T^{\rho'},  \quad \quad t\in [0,T].
\end{equation}
\end{THEOREM}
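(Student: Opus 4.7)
The plan is to derive a single differential inequality for $N(t) := \|\rho'(t)\|_{L^\infty}$ from the ingredients already assembled in this subsection, and then analyze it in two phases: a short-time continuity phase, followed by a maximum-principle argument once $t$ is bounded away from $0$.

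Starting from \eqref{e:drhobd1} at a maximizer $x_+(t)$ of $|\rho'(\cdot, t)|$, I would combine the dissipation bound \eqref{e:dissterm} (giving $-\rho \rho' \L_\a \rho'(x_+) \le -\rho_-(t) D_\a \rho'(x_+)$), the H\"older-based estimate \eqref{e:badtermH} on the ``bad'' term (which absorbs $\tfrac14 \rho_- D_\a \rho'$ at the cost of a contribution of order $t^{-\g/\a} N^{2+\a-\g}$), and the nonlinear maximum principle \eqref{e:nlmp} (which turns the remaining dissipation into a strictly negative $-c_T'' N^{2+\a}$ term). Throughout, \prop{p:Linfty} keeps $\rho_\pm$ bounded above and below uniformly on $[0,T]$. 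The outcome, valid a.e.\ in $t$ by the Rademacher-type lemma, should take the form
\begin{equation*}
\tfrac12 \p_t N^2 \le C_T(N_*^2 + 1) + C_T t^{-\g/\a} N^{2+\a-\g} - c_T'' N^{2+\a}, \qquad N_*(t) := \sup_{s\le t} N(s), \tag{$\star$}
\end{equation*}
with $c_T'' > 0$ and all constants depending only on the quantities listed in the theorem. We may also assume $\g \in (0, \a)$, since $\g$ can if needed be replaced by any smaller exponent via H\"older interpolation while the scaling $[\rho(t)]_{C^\g} \lesssim t^{-\g/\a}$ is preserved.

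For the short-time phase, by continuity of $N$ either $N \le N(0)+1$ throughout $[0,T]$ (and we are done), or there is a first time $\tau_1 \in (0,T]$ with $N(\tau_1) = N(0)+1$; on $[0, \tau_1]$ both $N$ and $N_*$ are bounded by $N(0)+1$. Integrating $(\star)$ on $[0,\tau_1]$ and discarding the dissipation yields $N(0)+\tfrac12 \le C_T'(\tau_1 + \tau_1^{1-\g/\a})$, using that $\int_0^{\tau_1} t^{-\g/\a}\dt$ is finite because $\g/\a<1$. This forces $\tau_1 \ge \tau_* > 0$ for an explicit $\tau_*$ depending only on $N(0)$ and the already-controlled data. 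For $t \in [\tau_*, T]$ the singular factor satisfies $t^{-\g/\a} \le \tau_*^{-\g/\a}$, so at any instant $t_\star$ where $N_*$ is strictly increasing we have $N(t_\star) = N_*(t_\star)$ and $\p_t N_*^2(t_\star) \ge 0$, whence $(\star)$ at $t = t_\star$ gives
\begin{equation*}
c_T'' N_*(t_\star)^{2+\a} \le C_T(N_*(t_\star)^2 + 1) + C_T \tau_*^{-\g/\a} N_*(t_\star)^{2+\a-\g}.
\end{equation*}
Since $\g > 0$, the left side is of strictly higher polynomial order in $N_*$ than each term on the right, so this inequality forces $N_*(t_\star) \le M_0$ for an explicit $M_0$. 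Combining with the short-time bound then yields $N(t) \le \max(N(0)+1, M_0) =: C_T^{\rho'}$ on all of $[0,T]$, with the required dependencies.

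The main obstacle is the singular factor $t^{-\g/\a}$ from the H\"older regularization \eqref{e:rhoHolder2}, which forbids a naive integration of $(\star)$ on $[0,T]$; this is the reason for the two-phase structure. The crucial structural feature making the maximum-principle phase succeed is $\g > 0$: it guarantees that the H\"older-induced bad term carries a strictly lower polynomial power of $N$ than the $N^{2+\a}$ dissipation provided by \eqref{e:nlmp}.
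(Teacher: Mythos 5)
Your proof is correct and follows essentially the same two-phase strategy as the paper: a short-time bound to get away from $t=0$, followed by a barrier/maximum-principle argument on $[\tau_*,T]$ in which the H\"older-improved estimate \eqref{e:badtermH} lowers the bad term to the power $2+\a-\g$ so that the $-c_T\|\rho'\|_{L^\infty}^{2+\a}$ dissipation coming from \eqref{e:dissterm} and \eqref{e:nlmp} dominates. The only (harmless) deviation is in the short-time phase, where the paper uses the non-singular bound \eqref{e:badterm} and integrates the resulting Riccati-type inequality $\p_t N^2\lesssim N_*^{2+\a}$ explicitly to produce $t_*$, whereas you keep the singular factor $t^{-\g/\a}$ and exploit its integrability near $t=0$ (after shrinking $\g$ below $\a$, which you correctly justify by interpolation) in a first-crossing argument; both routes yield a strictly positive $\tau_*$ with the required dependencies.
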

\begin{proof}

Step 1: We find a time $t_*$ such that $|\rho'(t)|$ does not grow too much on the interval $[0,t_*]$; more specifically, 
\begin{equation}
\label{e:rhotsmall}
\|\rho'(t)\|_{L^\infty} \le 2 \|\rho_0'\|_{L^\infty}, \quad \text{ for } t\in [0,t_*].
\end{equation}

To this end, we apply \eqref{e:drhobd1}, \eqref{e:dissterm}, \eqref{e:badterm}, and \eqref{e:nlmp} to conclude that on the interval $t\in [0,1]$, we have 

\begin{align*}
\p_t \|\rho'(t)\|_{L^\infty}^2
& = \p_t [(\rho')^2](x_+(t), t) \\
& \le C_1[\|\rho'\|_{L^\infty(\T\times [0,t])}^2 + 1] + C_1 \|\rho'(t)\|_{L^\infty}^{2+\a} \\ 
& \le C_1 \|\rho'\|_{L^\infty(\T\times [0,t])}^{2+\a}.
\end{align*}
This implies that 
\[
\|\rho'(t)\|_{L^\infty}^2 \le \frac{\|\rho_0'\|_{L^\infty}^2}{\left[ 1 - \frac{C_1 \a \|\rho_0'\|_{L^\infty}^{\a}}{2} t \right]^{2/\a}}
\quad \text{ for } 0\le t< \frac{2}{C_1 \a |\rho_0'|_{L^\infty}^\a}.
\]
In particular, putting 
\[
t_* = \min\left\{ \frac{2(1-2^{-\a})}{C_1 \a \|\rho_0'\|_{L^\infty}^\a}, 1\right\},
\]
we obtain \eqref{e:rhotsmall}, as needed.  Note that $C_1$ depends only on $\|u_0\|_{L^\infty}$, $\|\rho_0\|_{L^\infty}$, $\|\rho_0^{-1}\|_{L^\infty}$, $\|q_0\|_{W^{1,\infty}_x}$, $\|f\|_{L^\infty_t W^{2,\infty}_x}$, $\a$, and $\cM$, so that $t_*$ depends only on these quantities and $\|\rho_0'\|_{L^\infty}$.   

Step 2: We obtain bounds on $\rho'$ for $t\ge t^*$, by using \eqref{e:badtermH}. The point is that in light of the reduction of the power $2+\a$ to $2+\a - \g$, we can now absorb the bad term into the dissipation. At a maximum $x_+(t)$ of $\rho'$, we have
\begin{align*}
\frac12 \p_t[(\rho')^2](x_+(t),t)
& \le C_T [\|\rho'\|_{L^\infty(\T\times [0,t])}^2 + 1] + |(\rho')^2 \L_\a \rho(x_+(t),t)| - \rho \rho' \L_\a \rho'(x_+(t),t)\\
& \le C_T [\|\rho'\|_{L^\infty(\T\times [0,t])}^2 + 1] + 2\bigg[ C_T \|\rho'(t)\|_{L^\infty}^{2+\a-\g}[\rho(t)]_{C^\g} + \frac{1}{4}\rho_-(t) D_\a \rho'(x_+(t), t) \bigg] \\
& \quad \quad - \rho_-(t) D_\a\rho'(x_+(t),t) \\
& \le C_T [\|\rho'\|_{L^\infty(\T\times [0,t])}^2 + t^{-\g/\a}\|\rho'(t)\|_{L^\infty}^{2+\a-\g} + 1] - c_T \|\rho'(t)\|_{L^\infty}^{2+\a}.
\end{align*}
We claim that this implies 
\[
\|\rho'(t)\|_{L^\infty} \le \max\left\{ \left( \frac{5 C_T}{c_T} \right)^{\frac{1}{\a}}, \left( \frac{2 C_T}{t_*^{\g/\a}c_T} \right)^{\frac{1}{\g}},  3\|\rho_0'\|_{L^\infty}, 1 \right\}=:C_T^{\rho'},
\quad \text{ for } t\in [0,T].
\]
Indeed, let $t_0$ be the largest possible time in the interval $[0,T]$ such that $\|\rho'(t)\|_{L^\infty} \le C_T^{\rho'}$ for all $t\in [0, t_0]$.  Then $t_0>t_*$ by Step 1 and the definition of $C_T^{\rho'}$.  Suppose that $t_*<t_0<T$.  Then $\|\rho'(t_0)\|_{L^\infty} = \sup_{t\in [0,t_0]}\|\rho'(t)\|_{L^\infty}= C_T^{\rho'}$, so that 
	
\begin{align*}
\frac12 \p_t[(\rho')^2](x_+(t),t)
& \le C_T \|\rho(t_0)\|_{L^\infty}^2 \left( 2 - \frac{c_T}{2 C_T}(C_T^{\rho'})^\a \right) + C_T \|\rho(t_0)\|_{L^\infty}^{2+\a-\g} \left( \frac{1}{t_0^{\g/\a}} - \frac{c_T}{2 C_T}(C_T^{\rho'})^\g \right)  \\
& \le C_T \|\rho(t_0)\|_{L^\infty}^2 \left( 2 - \frac52 \right) + C_T \|\rho(t_0)\|_{L^\infty}^{2+\a-\g} \left( \frac{1}{t_0^{\g/\a}} - \frac{1}{t_*^{\g/\a}} \right) \\
& <0.
\end{align*}
This contradicts the definition of $t_0$.  We conclude therefore that $t_0 = T$, so that the desired statement holds. 
\end{proof}	

Now that we have this bound on $\rho'$, it is easy to establish bounds on and $e'$.  To this end, we recall the relation 
\[
e' = \rho^2 \left( \frac{q'}{\rho} \right) + q \rho'.
\]
In light of \eqref{e:qxuprough} and our bounds on $\rho'$, we conclude that $e'$ is uniformly bounded on $\T\times [0,T]$ as well, by a constant $C_T$ which is allowed to depend on $\|f\|_{L^\infty_t W^{2,\infty}_x}$, $\cM$, $\a$, $T$, $\|\rho_0^{-1}\|_{L^\infty}$, and the $W^{1,\infty}$ norms of $u_0$, $\rho_0$, and $e_0$.

To bound $u'$, we need to consider two cases, depending on the values of $\a$.  When $\a\in (0,1)$, we simply recall that $u' = e + \L_\a \rho$, which we now know to be bounded on $\T\times [0,T]$ by a constant $C_T$ which is allowed to depend on $\|f\|_{L^\infty_t W^{2,\infty}_x}$, $\cM$, $\a$, $T$, $\|\rho_0^{-1}\|_{L^\infty}$, and the $W^{1,\infty}$ norms of $u_0$, $\rho_0$, and $e_0$.  For $\a\in [1,2)$, however, we do not yet have a bound on $\|\L_\a \rho\|_{\infty}$.  We argue as follows in the case $\a\in (1,2)$.  (Note that we do not include $\a=1$.) We differentiate \eqref{e:parabmo}, then replace $u'$ by $\frac{m'}{\rho} - \frac{u\rho'}{\rho}$ and evaluate at a maximum $x_+(t)$ of $|m'(\cdot, t)|$, to obtain
\begin{equation}
\p_t m' + \frac{|m'|^2}{\rho} - \frac{u\rho' m'}{\rho} + e'm + em' = -\rho' \L_\a m - \rho \L_\a m' + \rho' f + \rho f'
\end{equation}
(where we understand that all terms are evaluated at $(x_+(t), t)$.  Multiplying by $m'(x_+(t),t)$, we obtain (bracketing the lower-order terms)
\begin{align*}
\frac12 \p_t [(m')^2](x_+(t),t) 
& = - \frac{(m')^3}{\rho}(x_+(t),t) -\rho'm' \L_\a m(x_+(t),t) - \rho m' \L_\a m'(x_+(t),t) \\
& \quad \quad + \left[ \frac{u\rho' |m'|^2}{\rho} - e' m m' - e|m'|^2 + m' \rho' f + \rho m' f'\right](x_+(t),t),
\end{align*}
so that 
\begin{equation*}
\frac12 \p_t [(m')^2](x_+(t),t)
\le  C_T[\|m'(t)\|_{L^\infty}^3+1]  +  |\rho' m' \L_\a m(x_+(t),t)| - \rho m' \L_\a m'(x_+(t),t),
\end{equation*}
with $C_T$ depending only on the usual quantities.  To estimate $|\rho' m' \L_\a m(x_+(t),t)|$, we take $r=1$ in \eqref{e:Lbdinfty} to obtain 
\begin{align*}
|\rho' m'\L_\a m(x_+(t),t)| 
& \le C\|\rho'(t)\|_{L^\infty} \|m'(t)\|_{L^\infty} [D_\a m'(x_+(t),t)^{\frac12} + \|m\|_{L^\infty} ] \\ 
& \le \frac14\rho_-(t)D_\a m'(x_+(t),t) + C_T[\|m'\|_{L^\infty}^2 + 1].
\end{align*}
Applying \eqref{e:dissterm} and \eqref{e:nlmp} once more, we obtain the following bound: 
\begin{align*}
\frac12 \p_t [(m')^2](x_+(t),t) 
& \le  C_T[\|m'(t)\|_{L^\infty}^3+1]  +  |\rho' m' \L_\a m(x,t)| - \rho m'(x,t) \L_\a m'(x,t) \\
& \le C_T[\|m'(t)\|_{L^\infty}^3+1] - c_T\|m'(t)\|_{L^\infty}^{2+\a}.
\end{align*}
Since $\a>1$, we can conclude using similar reasoning as in the proof of the bound on $\|\rho'(t)\|_{L^\infty}$ (though the present situation is slightly simpler, since we do not have to reason differently for small and large times).  We now pause to record the obtained bounds as a Proposition:

\begin{PROP}
Let $(u, \rho)$ be a regular solution on the time interval $[0,\infty)$.  For each $T>0$, there exists a constant $C_T^{e'}$, which may depend on $\|f\|_{L^\infty_t W^{2,\infty}_x}$, $\cM$, $\a$, $T$, $\|\rho_0^{-1}\|_{L^\infty_x}$, and the $W^{1,\infty}$ norms of $u_0$, $\rho_0$, and $e_0$, such that 
\begin{equation}
\|e'(t)\|_{L^\infty} \le C_T^{e'},  \quad \quad t\in [0,T].
\end{equation}
If $\a\ne 1$, there exists a constant $C_T^{u'}$ depending on the same quantities, such that 
\begin{equation}
\|u'(t)\|_{L^\infty} \le C_T^{u'},  \quad \quad t\in [0,T].
\end{equation}
\end{PROP}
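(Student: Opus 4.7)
The plan is to establish the two bounds sequentially, with the $e'$ bound following almost immediately from an algebraic identity and the $u'$ bound splitting into an easy and a hard case depending on whether $\a<1$ or $\a>1$.

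For the bound on $e'$, I would exploit the identity $e' = \rho^2(q'/\rho) + q\rho'$ from \eqref{e:exqx}. Proposition \ref{p:Linfty} supplies uniform bounds on $\rho$ and $q$, Theorem \ref{t:drhobd} supplies the uniform bound on $\rho'$, and substituting the latter into the rough estimate \eqref{e:qxuprough} controls $q'/\rho$. The triangle inequality on the identity then produces $\|e'(t)\|_{L^\infty}\le C_T^{e'}$ with the advertised dependencies.

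For $u'$ with $\a\in(0,1)$, the relation $u' = e + \L_\a\rho$ suffices: because $\phi_\a$ is integrable away from the origin and $\rho\in W^{1,\infty}$, $\L_\a\rho$ is pointwise controlled by a constant depending only on $\|\rho\|_{W^{1,\infty}}$ and $\a$, giving the bound for free. For $\a\in(1,2)$ no such naive estimate is available, and I would switch to the momentum $m=\rho u$, whose parabolic equation \eqref{e:parabmo} I would differentiate. Writing $u' = m'/\rho - u\rho'/\rho$ and evaluating at a spatial maximum $x_+(t)$ of $|m'(\cdot,t)|$ yields an inequality of the form
\[
\tfrac12\p_t[(m')^2](x_+(t),t) \le C_T\bigl[\|m'(t)\|_{L^\infty}^3+1\bigr] + |\rho' m' \L_\a m|(x_+(t),t) - \rho m' \L_\a m'(x_+(t),t),
\]
where all lower-order quantities $e$, $e'$, $u$, $\rho'$, $f$, $f'$ are absorbed into $C_T$ via bounds already established. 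I would split the remaining nonlocal cross term by applying the decomposition \eqref{e:Lbdinfty} with $r=1$: the high-frequency piece absorbs via Young's inequality into $\tfrac14\rho_-(t)\, D_\a m'(x_+(t),t)$, while the low-frequency piece contributes at most $C_T[\|m'(t)\|_{L^\infty}^2+1]$. Combining this with \eqref{e:dissterm} and the nonlinear maximum principle \eqref{e:nlmp} applied to $m'$ yields
\[
\p_t\|m'(t)\|_{L^\infty}^2 \le C_T\bigl[\|m'(t)\|_{L^\infty}^3+1\bigr] - c_T\|m'(t)\|_{L^\infty}^{2+\a},
\]
after which a comparison argument of the same flavor as in the proof of Theorem \ref{t:drhobd} furnishes a time-uniform bound on $\|m'(t)\|_{L^\infty}$. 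The desired bound on $\|u'(t)\|_{L^\infty}$ then follows from $u' = m'/\rho - u\rho'/\rho$ and the already established bounds on $\rho^{-1}$, $u$, $\rho'$.

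The main obstacle is the case $\a\in(1,2)$, and specifically showing that the dissipation exponent $2+\a$ is genuinely large enough to dominate the cubic growth $\|m'\|_{L^\infty}^3$ in the differential inequality. This succeeds precisely because $2+\a>3$ when $\a>1$, which is exactly why the statement excludes $\a=1$: at that borderline value the growth and dissipation exponents coincide, so the nonlinear maximum principle no longer provides a strict gain and the estimate cannot be closed without either sharpening constants or climbing one derivative higher in the scale of spaces.
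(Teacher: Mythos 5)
Your proposal is correct and follows essentially the same route as the paper: $e'$ via the identity $e'=\rho^2(q'/\rho)+q\rho'$ together with \eqref{e:qxuprough} and Theorem \ref{t:drhobd}; $u'=e+\L_\a\rho$ directly for $\a<1$; and for $\a\in(1,2)$ the differentiated momentum equation evaluated at a maximum of $|m'|$, with the cross term handled by \eqref{e:Lbdinfty} at $r=1$ and the dissipation by \eqref{e:dissterm}--\eqref{e:nlmp}, closing because $2+\a>3$. Your explanation of why $\a=1$ is excluded matches the structure of the paper's argument.
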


\subsection{Bounds in H\"older Spaces}
\label{ss:Holder2}
For $\a\ne 1$, we can now establish H\"older bounds on $u'$ and $\rho'$ in the same way that we treated $u$ and $\rho$ above.  We give the details only for $\rho'$. The right side of \eqref{e:rhox} is bounded in $L^\infty(0,T;L^\infty)$ for any $T>0$.  Therefore we can apply \cite{Silvestre} to \eqref{e:rhox} now, to conclude that for some $\g_1>0$, we have  
\begin{equation}
[\rho'(t)]_{C^{\g_1}} \le \frac{C_T}{t^{\g_1/\a}}\bigg( \|\rho'\|_{L^\infty(\T\times (0,T))} + \bigg\|\rho^3 \bigg( \frac{q'}{\rho} \bigg) + 2e\rho' + \rho' \L_\a \rho\bigg\|_{L^\infty(\T\times (0,T))}\bigg)\le \frac{C_T}{t^{\g_1/\a}}, 
\quad t\in (0,T].
\end{equation}
Here $C_T$ is allowed to depend on $\|f\|_{L^\infty_t W^{2,\infty}_x}$, $\cM$, $\a$, $T$, $\|\rho_0^{-1}\|_{L^\infty}$, and the $W^{1,\infty}$ norms of the initial data.  We record this, as well as the analogous bounds for $m'$ and $u'$, in the following proposition: 

\begin{PROP}
Let $(u, \rho)$ be a regular solution on the time interval $[0,\infty)$ and assume $\a\ne 1$.  Then for any $T>0$, there exists $\g_1>0$ such that $\rho$, $m=\rho u$, and $u$ satisfy bounds of the form 
\begin{equation}
\label{e:drhoHolder}
[\rho'(t)]_{C^{\g_1}}\le t^{-\g_1/\a} C_T, \quad t\in (0,T].
\end{equation}
\begin{equation}
\label{e:dmHolder}
[m'(t)]_{C^{\g_1}} \le t^{-\g_1/\a} C_T, \quad t\in (0,T];
\end{equation}
\begin{equation}
\label{e:duHolder}
[u'(t)]_{C^{\g_1}} \le t^{-\g_1/\a} C_T, \quad t\in (0,T];
\end{equation}
The constants $C_T$ may depend on $\|f\|_{L^\infty_t W^{2,\infty}_x}$, $\cM$, $\a$, $T$, $\|\rho_0^{-1}\|_{L^\infty}$, and the $W^{1,\infty}$ norms of $u_0$, $\rho_0$, and $e_0$.  The number $\g_1$ ultimately depends only on these same quantities.
\end{PROP}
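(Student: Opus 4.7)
The plan is to obtain \eqref{e:drhoHolder}, \eqref{e:dmHolder}, and \eqref{e:duHolder} by applying the main theorem of \cite{Silvestre} to fractional parabolic equations for $\rho'$ and $m'$, and then extracting the bound on $u'$ from the algebraic identity $u' = \rho^{-1}(m' - \rho' u)$. For \eqref{e:drhoHolder}, the argument is essentially the one given in the paragraph preceding the proposition: \eqref{e:rhox} has the form $\rho_t' + u\rho'' + \rho \L_\a \rho' = R_\rho$, the drift $u$ has the space-regularity required by \cite{Silvestre} (as in Section \ref{ss:Holder1}), and the right-hand side $R_\rho = -\rho^3(q'/\rho) - 2e\rho' - \rho'\L_\a\rho$ is uniformly bounded in $L^\infty(\T\times[0,T])$ by the Section \ref{ss:FO} and Section \ref{ss:W1infty} bounds on $\rho$, $e$, $\rho'$ and $q'/\rho$, together with the identity $\L_\a\rho = u' - e$, which lies in $L^\infty$ precisely because $\a\ne 1$.

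For \eqref{e:dmHolder}, I would differentiate \eqref{e:parabmo} in $x$ to obtain
\[
m_t' + um'' + \rho \L_\a m' = -u'm' - \rho'\L_\a m - e'm - em' + \rho' f + \rho f' =: R_m,
\]
again a fractional parabolic equation of the type considered by \cite{Silvestre}, with drift $u$ and diffusion kernel $\rho(x,t)|h|^{-1-\a}$. Every term in $R_m$ is controlled in $L^\infty$ by the Section \ref{s:Reg} and Section \ref{ss:W1infty} estimates, save for the factor $\L_\a m$. To handle this factor, I would use the commutator identity
\[
\L_\a(\rho u) = \rho \L_\a u + u \L_\a\rho + \int_\R (\rho(\cdot+z) - \rho(\cdot))(u(\cdot+z)-u(\cdot))|z|^{-1-\a}\,dz,
\]
whose integral term is bounded whenever $\rho, u \in W^{1,\infty}$ and $\a\in (0,2)$, and whose middle term $u\L_\a\rho = u(u'-e)$ is bounded. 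When $\a\in (0,1)$, $\rho\L_\a u$ is bounded in $L^\infty$ directly from $u\in W^{1,\infty}$; when $\a\in (1,2)$, I would instead rearrange the velocity equation (using $\L_\a\rho = u'-e$) as $\L_\a m = -u_t - ue + f$ and bound $u_t$ by exploiting the smoothness of the regular solution, or alternatively by a pointwise decomposition of $\L_\a m$ at extrema in the spirit of Section \ref{ss:W1infty} combined with the $L^\infty$ bound on $m'$ already established there. Applying \cite{Silvestre} to the $m'$ equation then yields \eqref{e:dmHolder}.

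Finally, \eqref{e:duHolder} follows from $u' = \rho^{-1}(m' - \rho' u)$: the right-hand side is a rational combination of $m'$, $\rho'$, $u$, and $\rho^{-1}$, and the previously-established H\"older bounds \eqref{e:rhoHolder2}, \eqref{e:uHolder}, \eqref{e:drhoHolder}, \eqref{e:dmHolder}, together with the positive lower bound on $\rho$ from Proposition \ref{p:Linfty}, give a $C^{\min(\g, \g_1)}$ bound on $u'$ with the same $t^{-\g_1/\a}$ rate; relabeling $\g_1$ to mean $\min(\g,\g_1)$ absorbs this. The principal obstacle in the whole scheme is securing the $L^\infty$ control of $\L_\a m$ when $\a\in (1,2)$, which cannot be read off from $m\in W^{1,\infty}$ alone and requires either the additional smoothness inherent to a regular solution or a careful pointwise estimate coupled to the already-proved bound on $u'$; once that is in hand, the rest of the argument is parallel to the treatment of $\rho$ and $m$ in Section \ref{ss:Holder1}.
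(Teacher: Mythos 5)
Your overall strategy is the same as the paper's: apply the main theorem of \cite{Silvestre} to the differentiated parabolic equations for $\rho'$ and $m'$, then recover $u'$ from the quotient identity $u'=\rho^{-1}(m'-\rho'u)$. The paper writes out only the $\rho'$ case (Silvestre applied to \eqref{e:rhox}, with the right-hand side $-\rho^3(q'/\rho)-2e\rho'-\rho'\L_\a\rho$ bounded in $L^\infty$ by the Section \ref{ss:W1infty} estimates and the identity $\L_\a\rho=u'-e$) and declares the $m'$ and $u'$ bounds ``analogous''; your treatment of $\rho'$ and your derivation of \eqref{e:duHolder} from \eqref{e:drhoHolder} and \eqref{e:dmHolder} match this, up to harmless bookkeeping of the H\"older exponents and time weights.

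The genuine issue is the one you flag yourself: for $\a\in(1,2)$ the forcing term $-\rho'\L_\a m$ in the $m'$ equation is not known to be in $L^\infty(\T\times[0,T])$ with constants of the admissible type, and neither of your two proposed fixes closes this. (a) ``Exploiting the smoothness of the regular solution'' to bound $u_t$ (equivalently $\L_\a m=-u_t-ue+f$) would make $C_T$ depend on the $H^4\times H^{3+\a}$ norm of the data rather than only on the quantities listed in the statement; this is not a cosmetic defect, because the whole point of tracking these dependencies is that the bounds must survive the mollification limit in Section \ref{s:strong}, where the higher norms of the mollified data blow up. (b) The pointwise decomposition \eqref{e:Lbdinfty} gives $|\L_\a m(x)|\le Cr^{1-\a/2}D_\a m'(x)^{1/2}+Cr^{-\a}\|m\|_{L^\infty}$, and in Section \ref{ss:W1infty} this is usable only because it is invoked at a maximum $x_+(t)$ of $|m'|$, where the resulting $D_\a m'(x_+(t),t)$ can be absorbed into the dissipation $-\rho m'\L_\a m'(x_+(t),t)\le-\rho_-(t)D_\a m'(x_+(t),t)$; the theorem of \cite{Silvestre}, by contrast, requires the forcing to be bounded at \emph{every} point, and away from extrema there is no dissipation term available to absorb $D_\a m'(x)$, which is not controlled by $\|m'\|_{L^\infty}$ alone when $\a\ge 1$. (Note also that $m\in W^{1,\infty}$ gives $\L_\a m\in L^\infty$ only for $\a<1$, and even the a posteriori conclusion $m'\in C^{\g_1}$ would require $\g_1>\a-1$ to feed back into $\L_\a m$, which the small exponent produced by \cite{Silvestre} does not guarantee.) So for $\a\in(0,1)$ your argument is complete, but for $\a\in(1,2)$ the $L^\infty$ control of $\L_\a m$ remains an unclosed step; to be fair, the paper's own text supplies no details here either, so this is a gap you have correctly located in the ``analogous'' claim rather than a wrong turn of your own.
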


\section{Strong Solutions}

\label{s:strong}

Our next goal is to prove the existence and uniqueness of global strong solutions.  We accomplish this in all cases under consideration except the case $\a=1$, where we prove only uniqueness.  We will proceed as follows.  First, we give the proof of existence for $\a\ne 1$; this follows essentially the same outline as the proof of the existence part of Theorem \ref{t:wk}.  Next, we take a brief detour to clarify the regularity of the time derivative $\p_t u$ in the case when $\a\in [1,2)$.  This discussion does not contain any deep facts, but it is strictly speaking necessary in order to carry out the integration-by-parts argument in our uniqueness argument.  As a byproduct of this discussion, though, we obtain a self-contained proof of the energy equality \eqref{e:ee} for strong solutions.  Finally, our proof of uniqueness follows a standard Gr\"onwall-type argument.  Note, however, that a bit of care is required in handling the dissipation term.  

\subsection{Existence ($\a\ne 1$)}
We have shown above that $u^\e$ and $\rho^\e$ are bounded sequences of $L^\infty(\d, T; C^{1,\g_1})$ for each $T>\d>0$. Applying the Aubin-Lions-Simon Theorem as before, we can find a subsequence of the sequence $\e_k$ constructed in the proof of Theorem \ref{t:wk}, which we will continue to label $\e_k$, such that $u^{\e_k}\to u$ and $\rho^{\e_k} \to \rho$ strongly in $C_\loc((0,\infty); C^1)$.  Now $(e^{\e_k})'$ is bounded in $L^\infty(\T\times [0,T])$; therefore it converges weak-$*$ (up to a subsequence) as $k\to \infty$ to some $g$ in the same class.  We claim that $g=e'$.  Indeed, we have as $k\to \infty$ that 
\[
\int_0^T \int_\T (e^{\e_k})' \f \dx\dt = \int_0^T \int_\T u^{\e_k} \f'' + \rho^{\e_k} \L_\a \f'\dx\dt
\to \int_0^T \int_\T u \f'' + \rho \L_\a \f'\dx\dt
= - \int_0^T \int_\T e \f' \dx\dt.
\]
But this limit is also equal to $\int_0^T \int_\T g\f$ by assumption.  Therefore $e$ is weakly differentiable in space, with weak derivative $e'=g$.  It follows that $(e^{\e_k})'$ converges weak-$*$ to $e'$.  Since $(u, \rho, e)$ is already known to be a weak solution, by Theorem \ref{t:wk}, we have now shown that $(u, \rho, e)$ is in fact a strong solution.

\subsection{Time Derivatives of Strong Solutions}
\label{ss:stee}

We begin by noting a few properties of strong solutions.  Most of these are basically obvious from the definition, but we believe it is useful to have them recorded explicitly.  First, we note that the evolution equations for $\rho$ and $e$ are true pointwise a.e., instead of merely in the weak sense; furthermore, all terms that appear in the equation belong to $L^\infty(0,T;L^\infty)$ for any $T>0$:
\begin{equation}
\rho_t + (\rho u)' = 0, \quad 
\text{ a.e., and in } L^\infty(0,T;L^\infty);
\end{equation}
\begin{equation}
e_t + (ue)' = f', \quad 
\text{ a.e., and in } L^\infty(0,T;L^\infty).
\end{equation}
The same is true for the $u$-equation if $\a<1$.  If $\a=1$, it may not be the case that $\L_\a(\rho u)\in L^\infty$, but it will belong to (for example) $L^2$.  (This is a rather academic point at the moment, though, since we have not proven the existence of strong solutions for $\a=1$.) If $\a > 1$, though, a pointwise a.e. interpretation is not available. However, we can still view the equation in $L^2 H^{-\a/2}$, as we did for weak solutions.
\begin{equation}
u_t + ue = -\L_\a(\rho u) + f, \quad 
\text{ a.e., and in } L^\infty(0,T;L^\infty), \text{ if } \a\in (0,1).
\end{equation}
\begin{equation}
u_t + ue = -\L_\a(\rho u) + f, \quad 
\text{ a.e., and in } L^2(0,T;L^2), \text{ if } \a\in (0,1].
\end{equation}
\begin{equation}
u_t + ue = -\L_\a(\rho u) + f, \quad 
\text{ in } L^2(0,T;H^{-\a/2}); \quad \a\in (0,2).
\end{equation}

Thus a bit of care is warranted in treating time derivatives of $u$.  We next show that the following formula is valid even when $\a\in (1,2)$:
\begin{equation}
\label{e:ptu2}
\frac12 \p_t u^2 =  u \p_t u = - u(ue) - u\L_\a(\rho u) + uf \quad 
\text{ in } L^2 H^{-\a/2}.
\end{equation}
Of course, when we write (for example) $u\p_t u$, we mean the element of $H^{-\a/2}$ defined by 
\[
\langle u\p_t u, \phi\rangle_{H^{-\a/2}, H^{\a/2}} = \langle \p_t u, u\phi\rangle_{H^{-\a/2}, H^{\a/2}};
\] 
the latter is perfectly well defined for any $\a\in (0,2)$ (but for different reasons, depending on whether $\a\in (0,1]$ or $\a\in (1,2)$). We will use this interpretation of $u\p_t u$ and similar elements of $H^{-\a/2}$ below without further comment.

Note that \eqref{e:ptu2} is obvious if $\a\in (0,1]$; therefore we assume below that $\a>1$.  But then $H^{\a/2}(\T)$ is an algebra, and $\|u\phi\|_{H^{\a/2}}\le C\|u\|_{H^{\a/2}} \|\phi\|_{H^{\a/2}}$.  Thus 
\[
|\langle u\L_\a (\rho u), \phi\rangle_{H^{-\a/2}\times H^{\a/2}}| = \int \L_{\a/2}(\rho u) \L_{\a/2}(u\phi)\dx \le C\|\rho u\|_{H^{\a/2}} \|u\|_{H^{\a/2}} \|\phi\|_{H^{\a/2}}.
\]
It follows that the right side of \eqref{e:ptu2} belongs to $L^2 H^{-\a/2}$ and is equal to $u\p_t u$ in this sense. It remains to show that this quantity is in fact equal to $\p_t u^2$.  To do this, we write 
\[
\langle u(s)^2, \phi\rangle_{H^{-\a/2}\times H^{\a/2}}\bigg|^t_0 
 = \int_\T u(s) \cdot u(s)\phi\dx\bigg|^t_0 
\]
for some time-independent function $\phi\in C^\infty(\T)$ and use the weak formulation of the $u$-equation, with $u\phi$ serving as the test function.  Note that since this weak formulation requires a very slight modification in this case to allow for the rough test function $u\phi$; to deal with this, we simply write a duality pairing in $H^{-\a/2}\times H^{\a/2}$ when necessary.  We have
\begin{align*}
\langle u(s)^2, \phi\rangle_{H^{-\a/2}\times H^{\a/2}}\bigg|^t_0 
& = \int_0^t \left( \langle \p_t u - \L_\a(\rho u), u\phi \rangle_{H^{-\a/2}\times H^{\a/2}} + \int [fu\phi - (ue)(u\phi)] \dx \right) \ds \\
& = \int_0^t \langle u\p_t u - u(ue) - u\L_\a(\rho u) + fu, \phi \rangle_{H^{-\a/2}\times H^{\a/2}} \ds \\
& = \int_0^t \langle 2u\p_t u,\phi\rangle_{H^{-\a/2}\times H^{\a/2}}\ds.
\end{align*}
This proves the claim.  Now we have $\rho, u^2\in L^2 H^1$, $\p_t \rho, \p_t(u^2)\in L^2 H^{-1}$, so we can apply the usual integration-by-parts formula to $\rho u^2$ as follows:
\begin{align*}
\frac12 \int \rho u^2(s)\dx \bigg|^t_0
& = \frac12 \int_0^t \langle \p_t \rho, u^2\rangle_{H^{-1}\times H^1} + \langle u \p_t u, \rho\rangle_{H^{-1}\times H^1}\ds \\
& = \int_0^t \int (\rho u) (uu')\dx + \langle -ue -\L_\a(\rho u) + f, \rho u\rangle_{H^{-\a/2}\times H^{\a/2}}\ds \\
& = \int_0^t \rho u^2(u'-e) + \rho u f - \langle \L_\a(\rho u), \rho u\rangle_{H^{-\a/2}\times H^{\a/2}}.
\end{align*}
Then recalling the definitions of $e$ and $\cT$, this yields
\begin{align*}
\frac12 \int \rho u^2(s)\dx \bigg|^t_0
& = \int_0^t \int \rho u f\dx + \langle \cT(\rho, u), \rho u\rangle_{H^{-\a/2}\times H^{\a/2}}\ds \\
& = -\int_0^t \int_\R \int_\T \rho(x) \rho(y) \frac{|u(x)-u(y)|^2}{|x-y|^{1+\a}}\dx\dy\ds + \int_0^t \int_\T \rho u f\dx\ds.
\end{align*}
Thus we have a self-contained proof of the validity of the energy equality \eqref{e:ee} for strong solutions.  One can prove \eqref{e:rhoee} even more easily.  The main point here, though, is the validity of the integration by parts.  We will use this below in our proof of the uniqueness.  

\subsection{Uniqueness}

Next, we prove uniqueness. Let $(u_1, \rho_1, e_1)$ and $(u_2, \rho_2, e_2)$ be two solutions to \eqref{e:mainv}--\eqref{e:maind} with the same initial data.  We assume $u_i, \rho_i, e_i\in W^{1,\infty}$ for $i=1,2$. Define
\[
\begin{array}{ccccccc}
u_\d = u_1 - u_2, & & \rho_\d = \rho_1 - \rho_2, & & e_\d = e_1 - e_2, & & q_\d = q_1 - q_2, \\
u_\s = u_1 + u_2, & & \rho_\s = \rho_1 + \rho_2, & & e_\s = e_1 + e_2, & & q_\s = q_1 + q_2.
\end{array}
\]
Then by the integration-by-parts formula (which is valid for all $\a\in (0,2)$ by the discussion in the previous subsection), we have 
\begin{align*}
\int \rho_\s u_\d^2(s)\dx \bigg|^t_0
& = \int_0^t \langle \p_t \rho_\s, u_\d^2\rangle_{H^{-1}\times H^1} + 2\langle u_\d \p_t u_\d, \rho_\s\rangle_{H^{-1}\times H^1}\ds \\
& = 2\int_0^t \int (\rho u)_\s (u_\d u_\d')\dx + \langle -(ue)_\d -\L_\a(\rho u)_\d, \rho_\s u_\d \rangle_{H^{-\a/2}\times H^{\a/2}}\ds.
\end{align*}
Expanding $(\rho u)_\s$, $(ue)_\d$ and $(\rho u)_\d$ yields
\begin{align*}
\int \rho_\s u_\d^2(s)\dx \bigg|^t_0 & = \int_0^t \int \rho_\s u_\s u_\d (u_\d'-e_\d) + \rho_\d u_\d^2 u_\d' - \rho_\s u_\d^2 u_\s' + \rho_\s u_\d^2 \L_\a \rho_\s \dx\ds \\
& \quad - \int_0^t \langle \L_\a(\rho_\s u_\d) + \L_\a(\rho_\d u_\s), \rho_\s  u_\d \rangle_{H^{-\a/2}\times H^{\a/2}}\ds.
\end{align*}
Using the definitions of $e_\d$ and $\cT$, then symmetrizing, we obtain
\begin{align*}
\int \rho_\s u_\d^2(s)\dx \bigg|^t_0 
& = \int_0^t \int [\rho_\d u_\d' - \rho_\s u_\s']u_\d^2\dx + \langle \cT(\rho_\s, u_\d) + \cT(\rho_\d, u_\s), \rho_\s u_\d \rangle_{H^{-\a/2}\times H^{\a/2}}\ds \\
& = \int_0^t \int [\rho_\d u_\d'- \rho_\s u_\s']u_\d^2\dx\ds -\frac12 \int_0^t \int_\R \int_\T \rho_\s(x)\rho_\s(y) \frac{|u_\d(x) -u_\d(y)|^2}{|x-y|^{1+\a}}\dx\dy\ds \\
& \quad + \int_0^t \langle \cT(\rho_\d, u_\s), \rho_\s u_\d \rangle_{H^{-\a/2}\times H^{\a/2}}\ds.
\end{align*}
For convenience, let us denote 
\[
\varepsilon_\d(t):=\frac12 \int_0^t \int_\R \int_\T \rho_\s(x)\rho_\s(y) \frac{|u_\d(x) -u_\d(y)|^2}{|x-y|^{1+\a}}\dx\dy\ds.
\]
We can estimate the term $\int |\langle \cT(\rho_\d, u_\s), \rho_\s  u_\d \rangle_{H^{-\a/2}\times H^{\a/2}}|\dx$ as follows.
\begin{align*}
& \int |\L_{\a/2}(\rho_\s u_\d u_\s)||\L_{\a/2}(\rho_\d)| + |\L_{\a/2}(\rho_\s u_\d)||\L_{\a/2}(\rho_\d u_\s)|\dx \\
& \quad \quad \le C\|\rho_\s u_\s u_\d\|_{H^{\a/2}} \|\rho_\d\|_{H^{\a/2}} +  C\|\rho_\s u_\d\|_{H^{\a/2}} \|\rho_\d u_\s\|_{H^{\a/2}} \\
& \quad \quad \le C\|\rho_\s u_\s\|_{W^{1,\infty}} \|u_\d\|_{H^{\a/2}} \|\rho_\d\|_{H^{\a/2}} +  C\|\rho_\s\|_{W^{1,\infty}} \|u_\d\|_{H^{\a/2}} \|\rho_\d\|_{H^{\a/2}} \|u_\s\|_{W^{1,\infty}} \\
& \quad \quad \le C^*\|\rho_\d\|_{H^{\a/2}}^2 + \varepsilon_\d(t).
\end{align*}
Above, we have repeatedly used the fact that $\|fg\|_{H^{a/2}}\le C\|f\|_{W^{1,\infty}} \|g\|_{H^{\a/2}}$ for $f\in W^{1,\infty}$, $g\in H^{\a/2}$.  To see why this inequality holds, simply note that 
\begin{align*}
\|fg\|_{H^{\a/2}}^2 
& = \int_\R \int_\T \frac{|fg(x)-fg(y)|^2}{|x-y|^{1+\a}}\dx\dy \\
& \le \int_\R \int_\T \frac{2 |f(x)|^2 |g(x)-g(y)|^2}{|x-y|^{1+\a}}\dx\dy + \int_\R \int_\T \frac{2|f(x)-f(y)|^2|g(y)|^2}{|x-y|^{1+\a}}\dx\dy \\ 
& \le 2\|f\|_{L^\infty}^2 \|g\|_{H^{\a/2}}^2 + C\|f\|_{W^{1,\infty}}^2 \|g\|_{L^2}^2 \le C\|f\|_{W^{1,\infty}}^2 \|g\|_{H^{\a/2}}^2.
\end{align*}

Thus, for any $\a\in (0,2)$, we have
\begin{equation}
\label{e:u2dtag1}
\| \sqrt{\rho_\s} u_\d(t)\|_{L^2}^2 \le C\int_0^t \|u_\d(s)\|_{L^2}^2\ds + C^*\int_0^t \|\rho_\d(s)\|_{H^{\a/2}}^2\ds.
\end{equation}
At this stage in the proof, the (possibly quite large) constant $C^*$ may appear worrisome.  But as we will see below, the $\rho_\d$ equation carries a term of the form $-\int_0^t \|\rho_\d(s)\|_{H^{\a/2}}^2\ds$.  Therefore, by multiplying the entire $\rho_\d$ equation by $C^*$ and adding the result to \eqref{e:u2dtag1}, we may absorb this bad term.  

We now treat the $\rho_\d$ equation.  We obtain 
\begin{align*}
\frac{\mathrm{d}}{\dt}\int \frac{\rho_\d^2}{\rho_\s}\dx 
& = \int \frac{2\rho_\d \p_t \rho_\d}{\rho_\s} + \rho_\d^2 \p_t\left( \frac{1}{\rho_\s} \right) \dx \\
& \le -\int \frac{\rho_\d (\rho_\s' u_\d + \rho_\s u_\d' +\rho_\d' u_\s + \rho_\d u_\s') }{\rho_\s}\dx + C\|\rho_\d\|_{L^2}^2.
\end{align*}
Using the fact that 
\[
u_\d' = \frac12(q_\d \rho_\s + q_\s \rho_\d ) + \L_\a \rho_\d,
\]
we write 
\begin{align*}
\int \rho_\d u_\d'\dx 
& = \frac12 \int (q_\d \rho_\s + q_\s \rho_\d)\rho_\d \dx + \int |\L_{\a/2} \rho_\d|^2 \dx.
\end{align*}
Therefore 
\begin{align*}
\frac{\mathrm{d}}{\dt} \left\| \frac{\rho_\d}{\sqrt{\rho_\s}} \right\|_{L^2}^2 
& \le -\int \frac{\rho_\d (\rho_\s' u_\d + \rho_\s u_\d' +\rho_\d' u_\s + \rho_\d u_\s') }{\rho_\s}\dx + C\|\rho_\d\|_{L^2}^2\\
&  = -\int \rho_\d u_\d \frac{\rho_\s'}{\rho_\s}\dx - \frac12\int (q_\d \rho_\s + q_\s \rho_\d)\rho_\d\dx - \|\L_{\a/2}\rho_\d\|_{L^2}^2 \\
& \quad \quad + \frac12 \int \rho_\d^2 \frac{(\rho_\s u_\s)'}{\rho_\s^2}\dx + C\|\rho_\d\|_{L^2}^2\\
& \le C[\|u_\d\|_{L^2}^2 + \|\rho_\d\|_{L^2}^2 + \|q_\d\|_{L^2}^2] - \|\L_{\a/2}\rho_\d\|_{L^2}^2.
\end{align*}
So adding up, integrating in time, and multiplying by the constant $C^*$ from \eqref{e:u2dtag1}, we obtain
\begin{equation}
\label{e:rho2dt}
C^*\left\| \frac{\rho_\d}{\sqrt{\rho_\s}}(t) \right\|_{L^2}^2
\le C\int_0^t \|u_\d(s)\|_{L^2}^2 + \|\rho_\d(s)\|_{L^2}^2 + \|q_\d(s)\|_{L^2}^2\ds - C^*\int_0^t \|\rho_\d(s)\|_{H^{\a/2}}^2\ds.
\end{equation}
Adding this to \eqref{e:u2dtag1}, we obtain
\begin{equation}
\label{e:urho2dt}
\|\sqrt{\rho_\s} u_\d(t)\|_{L^2}^2 + C^* \left\| \frac{\rho_\d}{\sqrt{\rho_\s}}(t)\right\|_{L^2}^2 
\le C\int_0^t \|u_\d(s)\|_{L^2}^2 + \|\rho_\d(s)\|_{L^2}^2 + \|q_\d(s)\|_{L^2}^2\ds.
\end{equation}
	
Finally, we deal with the $q_\d$ equation. 
\begin{equation}
\label{e:q2dt}
\frac{\mathrm{d}}{\dt}\int q_\d^2\dx = - \int u_\d q_\s' q_\d + u_\d q_\d' q_\d + \frac{2f'}{\rho_1 \rho_2}\rho_\d q_\d \dx \le C\|u_\d\|_{L^2}^2 + C\|\rho_\d\|_{L^2}^2 + C\|q_\d\|_{L^2}^2. 
\end{equation}

Integrating and adding to \eqref{e:urho2dt}, we obtain 	
\begin{equation}
\label{e:urhoq2dt}
\|\sqrt{\rho_\s} u_\d(t)\|_{L^2}^2 + C^* \left\|\frac{\rho_\d}{\sqrt{\rho_\s}}(t)\right\|_{L^2}^2 + \|q_\d(t)\|_{L^2}^2 
\le C\int_0^t \|u_\d(s)\|_{L^2}^2 + \|\rho_\d(s)\|_{L^2}^2 + \|q_\d(s)\|_{L^2}^2\ds.
\end{equation}

This proves that $u_\d$, $\rho_\d$, and $q_\d$ are identically zero, thus establishing uniqueness.

\subsection{The Case of a Compactly Supported Force $(\a\ne 1)$}

We finally note that, for $\a\ne 1$, the construction above gives our solution sufficient regularity so that we can prove that flocking occurs in the special case where $f\equiv 0$ (or when $f$ is compactly supported in time).  The key observation is that the velocity field $u$ is $C^1$ for all positive time; therefore we can apply the results of \cite{STII} (when $\a>1$) and \cite{STIII} (when $\a<1$) to show the existence of a flocking pair.  (Actually, the results of \cite{STII} are stated and proved only for the case $\a=1$, but trivial adjustments give the analogous statements and proofs for $\a>1$.)  We quote the results of intermediate steps without proof, giving details only for the existence of a flocking state.  

First, membership of $u(t)$ in $C^1(\T)$, $t>0$ allows us to prove the estimate
\begin{equation}
\label{e:dissenh}
D_\a u'(x) \ge \frac{c|u'(x)|^{2+\a}}{A(t)^\a},
\end{equation}
where $A(t)$ denotes the diameter of the velocities, as before, and $c$ is some positive absolute constant.  Recall that in Section \ref{ss:0forcewk} we showed that $A(t)$ decays exponentially quickly in time if $f$ is compactly supported.  Thus \eqref{e:dissenh} allows us to absorb powers of $u'$ into $D_\a u'$ after a finite time.  For a proof of \eqref{e:dissenh}, see Lemma 3.3 in \cite{STII} and Lemma 2.2 in \cite{STIII}.  The estimate \eqref{e:dissenh} is used to prove that
\begin{equation}
\label{e:duflock}
\|u'(\cdot, t)\|_{L^\infty}\le Ce^{-\delta t},
\end{equation}
for some $\d>0$; see Lemma 3.4 in \cite{STII} and Lemma 2.3 in \cite{STIII}.  Thus the convergence of $u(t)$ toward the constant $\overline{u}$ (c.f. Section \ref{ss:Holder1}) occurs exponentially quickly in $W^{1,\infty}$ rather than just in $L^\infty$.  

Now, in the case where $f$ is compactly supported in time, all of the bounds of Section \ref{s:Wbds} can be taken to be constant bounds. Therefore, for large $t$ (say $t>1$), $u(t)$ and  $\rho(t)$ are uniformly bounded in $C^{1,\g_1}$ for some $\g_1>0$, and $\rho(t)$ is bounded in $W^{1,\infty}$ for \textit{all} $t$.  Defining $\widetilde{\rho}(x,t) = \rho(x - t\overline{u}, t)$ and writing the density equation in the moving reference frame, we conclude that $\|\widetilde{\rho}_t\|_{L^\infty}\le C e^{-\d t}$ on account of the uniform boundedness of $\rho(t)$ in $W^{1,\infty}$ and the estimate \eqref{e:duflock}.  It follows that $\widetilde{\rho}$ converges in $L^\infty$ to a limiting state $\rho_\infty$.  Defining $\overline{\rho}(x,t) = \rho_{\infty}(x-t\overline{u})$, this completes the proof of fast flocking in $W^{1,\infty}\times L^\infty$ toward $(\overline{u}, \overline{\rho})$.  

We can conclude flocking in stronger spaces if we are willing to sacrifice the exponential rate of convergence.  Since $u(t)$ and $\rho(t)$ are uniformly bounded in $C^{1,\g_1}$, we must in fact have convergence in $C^{1,\e}\times C^{1,\e}$ for every $\e\in (0,\g_1)$ by compactness and by uniqueness of the limit in $W^{1,\infty}\times L^\infty$.

\section{Acknowledgments}
T.M. Leslie wishes to thank his advisor Roman Shvydkoy for stimulating discussions and valuable advice.  The work of T.M. Leslie is supported in part by NSF Grant DMS-1515705 (PI: Roman Shvydkoy).


\end{document}